\numberwithin{equation}{section}
\theoremstyle{plain}
	\newtheorem{theorem}{Theorem}[section] 
	\newtheorem{proposition}[theorem]{Proposition}       
	\newtheorem{lemma}[theorem]{Lemma}
	\newtheorem{corollary}[theorem]{Corollary}
\theoremstyle{definition}
	\newtheorem{definition}[theorem]{Definition}
	\newtheorem{remark}{Remark}[section]
\theoremstyle{remark}
\renewenvironment{proof}{\smallskip\noindent\emph{\textbf{Proof.}}%
  \hspace{1pt}}{\hspace{-5pt}{\nobreak\quad\nobreak\hfill\nobreak%
    $\square$\vspace{2pt}\par}\smallskip\goodbreak}
\newenvironment{proofof}[1]{\smallskip\noindent\emph{\textbf{Proof~of~#1.}}%
  \hspace{1pt}}{\hspace{-5pt}{\nobreak\quad\nobreak\hfill\nobreak%
    $\square$\vspace{2pt}\par}\smallskip\goodbreak}
\newcommand{\ds}[1]{\displaystyle{#1}}
\newcommand{\limit}[2]{{\ \underset{#1 \to #2}{\longrightarrow} \ }}
\newcommand{\1}{\mathbf{1}} 
\renewcommand{\d}[1]{\mathinner{\mathrm{d}{#1}}} 
\newcommand{\p}{\partial} 
\newcommand{\eps}{\mathrm{\varepsilon}}
\newcommand{\sgn}{\mathop{\rm sgn}}
\newcommand{\abs}[1]{{\left|#1\right|}}
\newcommand{\open}[2]{\mathopen]#1, #2 \mathclose[}
\newcommand{\norm}[1]{{\left\|#1\right\|}}
\newcommand{\N}{\mathbb{N}} 
\newcommand{\Z}{\mathbb{Z}} 
\newcommand{\R}{\mathbb{R}} 
\newcommand{\Czero}{\mathbf{C}} 
\newcommand{\Ck}[1]{\mathbf{C}^{#1}} 
\newcommand{\Cc}[1]{\mathbf{C}_\mathbf{c}^{#1}}
\newcommand{\Lip}{\mathbf{Lip}} 
\renewcommand{\L}[1]{\mathbf{L}^{#1}} 
\newcommand{\Lloc}[1]{\mathbf{L}_{\mathbf{loc}}^{#1}} 
\newcommand{\W}[2]{\mathbf{W}^{#1, #2}} 
\newcommand{\Wloc}[2]{\mathbf{W}_{\mathbf{loc}}^{#1, #2}}
\newcommand{\Hloc}[1]{\mathbf{H}_{\mathbf{loc}}^{#1}}
\newcommand{\BV}{\mathbf{BV}} 
\newcommand{\TV}{\mathbf{TV}}
\newcommand{\god}{\mathbf{G}}
\newcommand{\fint}{F_{\mathrm{int}}}
\newcommand{\cG}{\mathcal{G}}
\newcommand{\cR}{\mathcal{R}}
\newcommand{\cU}{\mathcal{U}}
\newcommand{\sV}{\mathscr{V}}
\newcommand{\cM}{\mathcal{M}}
\newcommand{\red}[1]{\textcolor{Red}{#1}}
\begin{document}

\title{\textbf{Scalar Approach to ARZ-Type Systems of Conservation Laws}}

\author{Abraham Sylla$^1$}

\date{}

\maketitle

\footnotetext[1]{\texttt{abraham.sylla@u-picardie.fr} \\
LAMFA CNRS UMR 7352, Université de Picardie Jules Verne \\
33 rue Saint-Leu, 80039 Amiens (France) \\
ORCID number: 0000-0003-1784-4878}

\thispagestyle{empty}

\begin{abstract}
    We are interested in 2 $\times$ 2 systems of conservation laws of special structure, including generalized Aw-Rascle and Zhang (GARZ) models for road traffic. The simplest representative is the Keyfitz-Kranzer system, where one equation is nonlinear and not coupled to the other, and the second equation is a linear transport equation which coefficients depend on the solution of the first equation. In GARZ systems, the coupling is stronger, they do not have the “triangular” structure of Keyfitz-Kranzer.

    In our setting, we claim that it makes sense to address these systems \textit{via} a kind of splitting approach. Indeed, [E. Y. Panov, Instability in models connected with fluid flows II, 2008] proposes a robust framework for solving linear transport equations with divergence free coefficients. Our idea is to use this theory for the second equation of GARZ systems, and to exploit discontinuous flux theory advances for the first equation of the system. 
\end{abstract}

\textbf{2020 AMS Subject Classification:} 65M08, 65M12, 35L65, 76A30.

\textbf{Keywords:} System of Conservation Laws, Discontinuous Flux, Finite Volume Scheme



\section{Introduction}

This paper deals with the theoretical and numerical analysis of a 2 $\times$ 2 system of coupled conservation laws, which aim is to model self-organized road traffic. The first equation expresses the conservation of mass. The second equation, which unknown $w$ is the state of orderliness in drivers' behavior, is a transport equation with discontinuous coefficients. The resulting system is a variant of generalized second order models (GSOM) of traffic inspired by the already classical Aw-Rascle and Zhang model, see \cite{AR2000, Zhang2002}:
\begin{empheq}[left=\empheqlbrace]{align}
    \label{eq:gsom}
    \p_t \rho + \p_x \left(f(\rho, w) \right) & = 0 \nonumber \\
    \p_t \left(\rho w \right) + \p_x \left(f(\rho, w) w\right) & 
    = 0.
\end{empheq}

The flux function takes the form 
\begin{equation}
    \label{eq:Flux}
    f(\rho, w) \coloneq \rho \sV(\rho, w),
\end{equation}

where for some velocity functions $V_{\min}, V_{\max} \in \Lip([0, 1], \R^+)$, 
\begin{equation}
    \label{eq:Velocity}
    \sV(\rho, w) \coloneq (1-w) V_{\min}(\rho) + w V_{\max}(\rho).
\end{equation}

Above, $V_{\min}, V_{\max}$ are the two levels of traffic velocity; the one for the ordered regime of traffic and the other for the disordered regime. 
The actual velocity $\sV$ is a convex combination of the two regimes' velocities with $w(t,x) \in [0, 1]$ representing the state of orderliness of the traffic at time $t$ and position $x$. The system \eqref{eq:gsom} appears as the local version of the one studied by the authors in \cite{AS2022}, see the references within for a discussion about macroscopic traffic models of the same type.

On the mathematical side, the idea is to apply scalar equations techniques to study the system \eqref{eq:gsom}. The first equation is treated as a conservation laws with spatial discontinuous flux. For piecewise regular (in time/space) discontinuities, the theory has more or less reached a level of completeness, see for instance \cite{AG2003, AJG2004,AGM2005,AKR2011, Diehl1995,GR1992,KRT2003,KT2004,Towers2000}. We deal with the second equation using the theory of renormalized solutions of linear transport equations with discontinuous coefficients 
put forward in \cite{Panov2008}. 

All proofs are deferred to Section \ref{sec:Proofs}.


\section{Main result}
\label{sec:Main}

Throughout, we assume that the velocity functions satisfy
\begin{equation}
	\label{eq:VelocitiesAssumption}
	V_{\min}, \; V_{\max} \text{ are non-increasing, } \; V_{\min} \leq V_{\max} \; 
    \text{ and } \; V_{\min}(1) = V_{\max}(1) = 0,
\end{equation} 

and that the flux defined in \eqref{eq:Flux} is genuinely nonlinear in the sense that 
\begin{equation}
    \label{eq:StrictConcavity}
    \forall (\rho, w) \in [0, 1]^2, \quad \p_{\rho}^2 f(\rho, w) < 0.
\end{equation}

Assumption \eqref{eq:StrictConcavity} ensures that for all $w\in [0, 1]$, $\rho\mapsto f(\rho, w)$ is strictly concave, which is a usual requirement in the context of traffic flow modelling. 

The following definition is a combination of the theory of discontinuous flux for the 
first equation and renormalized solutions of transport equations with discontinuous 
coefficients for the second one. 

As always with nonlinear conservation laws, the Kruzhkov entropy flux \cite{Kruzhkov1970} is used:
\begin{equation}
    \label{eq:EntropyFlux}
    \forall \rho, w, k \in [0, 1], \quad
    \Phi(\rho, w, k) \coloneq \sgn(\rho - k) (f(\rho, w) - f(w, k)).
\end{equation}

\begin{definition}
    \label{def:gsom}
    Let $\rho_o, w_o \in \L{\infty}(\R, [0, 1])$. Assume that $w_o$ is piecewise constant: there exist $M \in \N \setminus \{0\}$ ordered real numbers $(d_m)_{m \in [\![1; M]\!]}$ and values $(a_m)_{m \in [\![0; M]\!]} \subset [0, 1]$ such that for all $x \in \R$,
    \begin{equation}
        \label{eq:InitialOrg}
        w_o(x) =
        \left\{ 
        \begin{array}{cl}
            a_o & \text{if} \; x < d_1 \\
            a_{m} & \text{if} \; x \in [d_m, d_{m+1} \mathclose[, \; 
            m \in [\![1; M-1]\!] \\ 
            a_{M} & \text{if} \; x \geq d_M. 
        \end{array}
        \right.
    \end{equation}
    
    We say that $(\rho, w) \in \L{\infty}(\R^+ \times \R, [0, 1])^2$ is an entropy solution to \eqref{eq:gsom}-\eqref{eq:Flux} with initial datum $(\rho_o, w_o)$ if the following points hold.

    \begin{enumerate}[label=\bf(d\arabic*)]
        \item \label{item:regularity} 
        There exist $M$ ordered curves $y_m \in\Lip(\open{t_m}{T_m}, \R)$ such that if $\Gamma_m \coloneq \{(t, y_m(t)),\;t \in [t_m, T_m\mathclose[\}$, then $w$ is constant in each connected component of $(\R^+ \times \R) \setminus \cup_{m=1}^M \Gamma_m$.

        \item \label{item:EntropyInequalities} For all test functions $\varphi \in \Cc{\infty}(\R^+ \times \R, \R^+)$ and for all $k \in [0, 1]$, we have:
        \begin{empheq}{align}
            \label{eq:EI}
            \int_0^{+\infty} \int_\R \bigl( \abs{\rho - k} \p_t \varphi 
            + \Phi(\rho, w, k) \p_x \varphi \bigr) \d{x} \d{t} 
            + \int_\R \abs{\rho_o(x) - k} \varphi(0, x) \d{x} \nonumber \\
            + \sum_{m=1}^M \int_{t_m}^{T_m}
            \abs{f(k, w(t, y_m(t)+)) - f(k, w(t, y_m(t)-))} \varphi(t, y_m(t)) \d{t} \geq 0.
        \end{empheq}

        \item \label{item:WeakFormulation} For all test functions $\phi \in \Cc{\infty}(\R^+ \times \R, \R)$, it holds:
        \begin{equation}
            \label{eq:WF}
            \int_0^{+\infty} \int_\R \bigl( \rho w \p_t \phi + f(\rho, w) w \p_x \phi 
            \bigr) \d{x} \d{t} + \int_\R \rho_o(x) w_o(x) \phi(0, x) \d{x} = 0.
        \end{equation}
    \end{enumerate}
\end{definition}

\begin{remark}
    \label{rk:RemainderTerm}
    Due to the form of $f$, the last term in \eqref{eq:EI} rewrites 
    \[
        (f_{\max}(k) - f_{\min}(k)) \sum_{m=1}^M \int_{t_m}^{T_m}
        \abs{w(t, y_m(t)+) - w(t, y_m(t)-)} \varphi(t, y_m(t)) \d{t},
    \]

    with $f_{\min, \max}(\rho) \coloneq \rho V_{\min, \max}(\rho)$.
\end{remark}

We build a numerical scheme and prove that the limit is an entropy solution in the sense of Definition \ref{def:gsom}. Unlike in \cite{GSOM2022} where the authors analyzed the $\BV$ stability to obtain compactness 
for the approximate density, here, we rely on the compensated compactness. 
The approximation of the transport equation for $w$ in \eqref{eq:gsom} is still 
obtained exploiting the idea of propagation along characteristics. 

Our scheme requires the density to be away from the vacuum at all times. 
Therefore, we reinforce the assumptions on the initial datum and on the velocities with the following:
\begin{equation}
    \label{eq:vacuum}
    \exists \eps \in \open{0}{1}, \; \forall \rho \in [0, \eps], 
    \quad V_{\min}(\rho) = V_{\max}(\rho).
\end{equation}

In the context of traffic modelling, it makes sense to assume that the phase transition 
process only takes place at moderated and high densities. 
With Assumption \eqref{eq:vacuum}, the model \eqref{eq:gsom}-\eqref{eq:Flux} presents a 
two-phase behavior with $\rho \in [0, \eps]$ corresponding to the free traffic flow 
phase while $\rho > \eps$ corresponds to the congested traffic.
The situation we have in mind is depicted in Figure \ref{fig:PhaseTransition}.

\begin{figure}[!htp]
    \begin{center}
    \includegraphics[width=\textwidth]{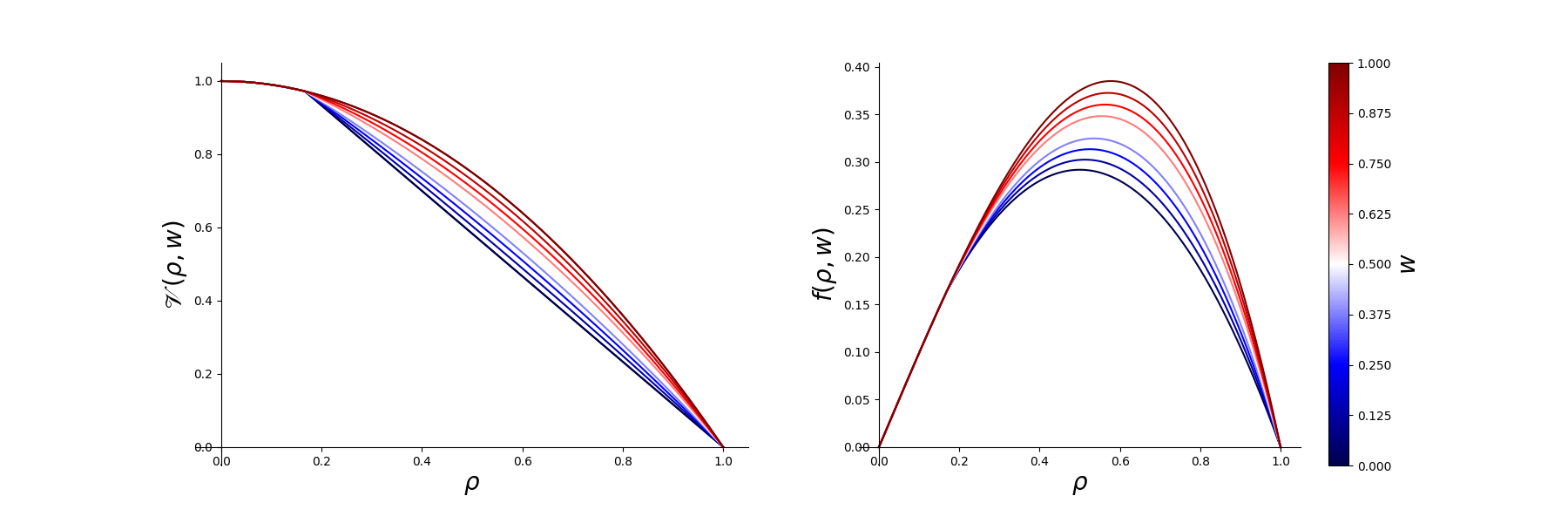}
    \end{center}
    \vspace{-5mm}
    \caption{Illustration of the phase transition.}
    \label{fig:PhaseTransition}
\end{figure}

Let us describe the scheme. 
Fix a spatial-temporal mesh size $(\Delta x, \Delta t)$. With reference to 
\eqref{eq:vacuum}, we assume that the ratio $\lambda \coloneq \Delta t/\Delta x$ satisfies the CFL condition 
\begin{equation}
    \label{eq:CFL}
    \lambda \max\left\{2, \frac{1}{\eps}\right\} L \leq 1, \quad 
    L\coloneq \sup_{u, \omega \in [0, 1]} \abs{\p_\rho f(u, \omega)}.
\end{equation}

For all $n \in \N$ and $j \in \Z$, set the notations 
\[
    t^n \coloneq n \Delta t, \quad x_j \coloneq j\Delta x, \; 
    x_{j+1/2} \coloneq (j+1/2)\Delta x, \; 
    I_j \coloneq \open{x_{j-1/2}}{x_{j+1/2}}.
\]

Discretize the initial datum:
\[
    \forall j \in \Z, \quad  
    (u_j^0, w_j^0) \coloneq \frac{1}{\Delta x} \int_{I_j} (\rho_o(x), w_o(x)) \d{x}.
\]

Fix $n \in \N$. Let us explain how, given $(u_j^n, w_j^n)_{j \in \Z}$, we determine  
$(u_j^{n+1}, w_j^{n+1})_{j \in \Z}$. 

We start by updating $(u_j^n)_{j \in \Z}$. Let $\rho^n$ and $\omega^n$ be defined as 
\[
    (\rho^n, \omega^n) \coloneq \sum_{j \in \Z} (u_j^n, w_j^n) \1_{I_j}.
\]

We call $\cU^n = \cU^n(t, x)$ the unique entropy solution in the sense of 
Theorem \ref{th:EquivalentDefinitions} to the following Cauchy problem, set on 
$\open{t^n}{t^{n+1}} \times \R$:
\begin{empheq}[left=\empheqlbrace]{align}
    \label{eq:schemeStep1}
    \p_t \cU(t, x) + \p_x \left(f(\cU(t, x), \omega^n(x)) \right) & = 0 \nonumber \\
    \cU(t^n, x) & = \rho^n(x).
\end{empheq}

Define $u_j^{n+1}$ as 
\begin{equation}
    \label{eq:MF1}
    u_j^{n+1} \coloneq \frac{1}{\Delta x} \int_{I_j} \cU^n(t^{n+1}-, x) \d{x}.
\end{equation}

Let us precise that under the CFL condition, waves emanating from the interfaces 
$x = x_{j+1/2}$ do not interact. Consequently, \eqref{eq:MF1} rewrites as
\begin{equation}
    \label{eq:MF2}
    u_j^{n+1} = u_j^n - \lambda (F_{j+1/2}^n (u_{j}^n, u_{j+1}^n)) 
    - F_{j-1/2}^n (u_{j-1}^n, u_{j}^n)),
\end{equation}

where following \cite{AJG2004}, for all $(u_l, u_r) \in [0, 1]^2$,
\begin{equation}
    \label{eq:InterfaceFluxFirst}
    F_{j+1/2}^n (u_l, u_r) \coloneq 
    \min \biggl\{\god_j^n (u_l, \eps), \; \god_{j+1}^n(1, u_r) \biggr\}
\end{equation}

is the flux across the interface $x = x_{j+1/2}$, with $\god_j^n$ 
denoting the Godunov numerical flux associated with $f_j^n \coloneq \rho \mapsto f(\rho, w_j^n)$. Next, we update $(w_j^n)_j$ having in mind the non-conservative form of the second equation in \eqref{eq:gsom}. More precisely, with reference to \eqref{eq:Velocity}, we essentially discretize 
\[
    \p_t w + \sV(\rho, w) \p_x w = 0 
\]

following two steps.
\begin{itemize}
    \item \textbf{Step 1:} Drawing the numerical characteristics:
    \begin{equation}
        \label{eq:NumericalCharacteristics}
        \forall j \in \Z, \quad 
        X_{j-1/2}^{n+1} \coloneq x_{j-1/2} + s_{j-1/2}^{n} \; \Delta t, \quad 
        s_{j-1/2}^{n} \coloneq \frac{F_{j-1/2}^n (u_{j-1}^n, u_{j}^n)}{u_{j}^{n+1}}.
    \end{equation}
    
    \item \textbf{Step 2:} Averaging step:
    \begin{equation}
        \label{eq:MFw}
        \forall j \in \Z, \quad w_{j}^{n+1} \coloneq \left(1 - \lambda s_{j-1/2}^{n} \right) 
        w_j^n  + \lambda s_{j-1/2}^{n} w_{j-1}^n.
    \end{equation}
\end{itemize} 

Notice that one will need to prove that \eqref{eq:NumericalCharacteristics} always has a sense, meaning that $u_j^{n+1}$ never vanishes.
\bigskip 

For the numerical analysis, we define the approximate solutions 
$(\rho_\Delta, w_\Delta)$ the following way:
\[
    \rho_\Delta \coloneq \sum_{n=0}^{+\infty} \cU^n 
    \1_{[t^n , t^{n+1} \mathclose[}, \quad 
    w_\Delta \coloneq \sum_{n=0}^{+\infty} \sum_{j \in \Z} w_j^n 
    \1_{[t^n , t^{n+1} \mathclose[ \times I_j}. 
\]

We can now state our main result.

\begin{theorem}
    \label{th:ConvergenceScheme}
    Assume that $V_{\min}, V_{\max} \in \Lip([0, 1], \R^+)$ satisfy 
    \eqref{eq:VelocitiesAssumption}-\eqref{eq:StrictConcavity}-\eqref{eq:vacuum}.
    Fix $\rho_o \in \L{\infty}(\R, [\eps, 1])$ and $w_o \in \BV(\R, [0, 1])$ piecewise constant. 
    Then, as $(\Delta x, \Delta t) \to (0, 0)$ while satisfying \eqref{eq:CFL}, along a subsequence, $(\rho_\Delta, w_\Delta)$ converges to an entropy solution of \eqref{eq:gsom}-\eqref{eq:Flux}.
\end{theorem}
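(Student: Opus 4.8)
The plan is to run the standard monotone-finite-volume programme --- a priori bounds, compactness, passage to the limit --- the only genuinely new ingredient being the use of compensated compactness (rather than a $\BV$ estimate) for the density.

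\textbf{Step 1: a priori bounds and admissibility of the scheme.} I would first show by induction on $n$ that $(u_j^n, w_j^n) \in [\eps, 1] \times [0, 1]$ for all $j \in \Z$, $n \in \N$. For the density I use the form \eqref{eq:MF2}--\eqref{eq:InterfaceFluxFirst}: as a minimum of two Godunov fluxes, $F_{j+1/2}^n$ is non-decreasing in its first and non-increasing in its second argument, so under \eqref{eq:CFL} the update $(u_{j-1}^n, u_j^n, u_{j+1}^n) \mapsto u_j^{n+1}$ is monotone; using \eqref{eq:vacuum} (which makes $f(\eps, \cdot)$ independent of $w$, so $\god_j^n$ evaluated at $\eps$ does not see the flux discontinuities) and $f(1, \cdot) = 0$ from \eqref{eq:VelocitiesAssumption}, the constant states $\eps$ and $1$ are fixed points of the scheme, and a discrete comparison principle gives $u_j^n \in [\eps, 1]$. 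In particular $u_j^{n+1} \geq \eps > 0$, so $s_{j-1/2}^n$ in \eqref{eq:NumericalCharacteristics} is well defined; since $0 \leq F_{j-1/2}^n \leq L$ and $u_j^{n+1} \geq \eps$, \eqref{eq:CFL} yields $\lambda s_{j-1/2}^n \in [0,1]$, whence \eqref{eq:MFw} exhibits $w_j^{n+1}$ as a convex combination of $w_j^n$ and $w_{j-1}^n$, and $w_j^n \in [0,1]$ follows.

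\textbf{Step 2: compactness.} For $w_\Delta$: setting $\mu_j^n := \lambda s_{j-1/2}^n \in [0,1]$, \eqref{eq:MFw} gives the identity $w_{j+1}^{n+1} - w_j^{n+1} = (1 - \mu_{j+1}^n)(w_{j+1}^n - w_j^n) + \mu_j^n (w_j^n - w_{j-1}^n)$; summing in $j$ yields $\TV(w_\Delta(t^{n+1},\cdot)) \leq \TV(w_\Delta(t^n,\cdot)) \leq \TV(w_o)$, and the same identity controls $\norm{w_\Delta(t^{n+1},\cdot) - w_\Delta(t^n,\cdot)}_{\Lloc{1}} \lesssim \Delta t \, \TV(w_o)$. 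Hence $(w_\Delta)$ is bounded in $\BV_{\mathrm{loc}}(\R^+ \times \R)$, so along a subsequence $w_\Delta \to w$ in $\Lloc{1}$ and a.e. Moreover the scheme transports the $M$ discontinuities of $w_o$: the discrete interface positions $X_{j-1/2}^{n}$ attached to the $d_m$'s move at speeds $s_{j-1/2}^n$ that are uniformly bounded (by $L/\eps$), while the numerical diffusion in \eqref{eq:MFw} only smears each of them over a band of width $O(\sqrt{\Delta x})$; consequently the limit discontinuities are $M$ Lipschitz curves $y_m$ and, since the update leaves any cell surrounded by a constant state unchanged, $w$ is constant in each connected component of the complement of $\cup_m \Gamma_m$ --- this is \ref{item:regularity}. (Alternatively \ref{item:regularity} can be derived a posteriori from \ref{item:WeakFormulation}, the entropy admissibility of $\rho$, the no-vacuum bound $\rho \geq \eps$ and Panov's renormalization theory \cite{Panov2008}.) For $\rho_\Delta$: the scheme being monotone, $\rho_\Delta$ satisfies discrete Kruzhkov entropy inequalities for the flux $f(\cdot, \omega^n)$; localizing on any open set $\Omega$ where $w$ equals one of the constants $a_m$ and using that $\omega^n \to a_m$ in $\Lloc{2}(\Omega)$, one checks that $\p_t \eta(\rho_\Delta) + \p_x q_{a_m}(\rho_\Delta)$ (with $\eta$ smooth convex and $q_{a_m}$ the associated entropy flux of $f(\cdot, a_m)$) decomposes into a bounded entropy-dissipation measure plus terms converging to $0$ in $\Hloc{-1}(\Omega)$; by Murat's lemma and the $\Lloc{\infty}$ bound it is precompact in $\Hloc{-1}(\Omega)$. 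Since $f(\cdot, a_m)$ is genuinely nonlinear by \eqref{eq:StrictConcavity}, Tartar's reduction of the Young measure of $\rho_\Delta$ on $\Omega$ forces it to be a Dirac mass, i.e. $\rho_\Delta \to \rho$ in $\Lloc{1}(\Omega)$; the curves $\Gamma_m$ being Lebesgue-negligible, $\rho_\Delta \to \rho$ in $\Lloc{1}(\R^+ \times \R)$ and a.e.

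\textbf{Step 3: passage to the limit.} Substituting $s_{j-1/2}^n u_j^{n+1} = F_{j-1/2}^n$ into the product $u_j^{n+1} w_j^{n+1}$ yields the conservative reformulation
\[
u_j^{n+1} w_j^{n+1} = u_j^n w_j^n - \lambda \bigl( F_{j+1/2}^n w_j^n - F_{j-1/2}^n w_{j-1}^n \bigr),
\]
so $\rho w$ is updated with the consistent upwind numerical flux $F_{j+1/2}^n w_j^n$ for $f(\rho, w) w$; a Lax--Wendroff argument, using $u_j^n w_j^n \to \rho w$ and $F_{j+1/2}^n w_j^n \to f(\rho,w) w$ in $\Lloc{1}$ (consistency of $F$ together with the strong convergence of $\rho_\Delta$ and $w_\Delta$) and the consistency of the initial data, gives \ref{item:WeakFormulation}. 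For \ref{item:EntropyInequalities}, monotonicity provides cell-wise discrete entropy inequalities for $\abs{\cdot - k}$ whose defect, by the structure \eqref{eq:InterfaceFluxFirst} of the interface flux and Remark \ref{rk:RemainderTerm}, is controlled by $(f_{\max}(k) - f_{\min}(k)) \abs{w_{j+1}^n - w_j^n}$; summing against a test function and using that $\TV(w_\Delta(t^n,\cdot))$ stays bounded and that, in the limit, the variation of $w$ is carried by $\cup_m \Gamma_m$, the defect converges precisely to the interface term of \eqref{eq:EI}, the remaining terms passing to the limit by strong convergence. This shows $(\rho,w)$ is an entropy solution in the sense of Definition \ref{def:gsom}, and the $\Lloc{\infty}$ bounds of Step 1 guarantee $(\rho,w) \in \L{\infty}(\R^+ \times \R, [0,1])^2$.

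\textbf{Expected main obstacle.} The crux is the density compactness in Step 2: because a $\BV$ bound on $\rho_\Delta$ is not available here, one must extract compactness from the entropy dissipation, and this has to be made compatible with the fact that the flux $f(\cdot, \omega^n)$ carries moving discontinuities in $w$. Thus the delicate points are (i) establishing \ref{item:regularity} precisely enough --- that the discontinuities of $w$ organise into finitely many Lipschitz curves with a clean constant-state structure in between --- so that the compensated compactness argument can be localized away from them, and (ii) the $\Hloc{-1}$-compactness of the entropy production in this discontinuous-flux setting. Everything else is a lengthy but routine adaptation of monotone-scheme and Lax--Wendroff technology.
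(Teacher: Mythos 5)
Your overall strategy (monotone-scheme bounds, $\BV$ compactness for $w_\Delta$, compensated compactness for $\rho_\Delta$, Lax--Wendroff passage to the limit, conservative rewriting of \eqref{eq:MFw}) matches the paper's, but there is a genuine gap in the logical order of your Step 2, and it sits exactly where you flag the crux. Your compensated-compactness argument for $\rho_\Delta$ is localized on open sets where the limit $w$ is constant, so it presupposes item \ref{item:regularity}: that $w$ is piecewise constant with finitely many Lipschitz discontinuity curves. Your primary argument for this is a discrete one --- that the averaging step \eqref{eq:MFw} smears each initial jump over a band of width $O(\sqrt{\Delta x})$ around a discrete characteristic. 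That claim is not proved and is not obvious: the speeds $s_{j-1/2}^n = F_{j-1/2}^n/u_j^{n+1}$ vary from cell to cell across the transition zone (they depend on the discrete density, whose convergence you do not yet have), so a priori the leading and trailing edges of the smeared profile could travel at different speeds and the transition zone could spread proportionally to $t$ rather than to $\sqrt{\Delta x}$; nothing at the discrete level alone rules this out at this stage. Your fallback --- deriving \ref{item:regularity} a posteriori from \ref{item:WeakFormulation} and Panov-type renormalization --- is in fact the paper's route (Theorem \ref{th:BeyondPanov}), but in your ordering it is circular: to know that $(\rho, f(\rho,w))$ is divergence free and that $w$ is a weak solution of the transport equation you must first pass to the limit in the scheme, which requires the strong convergence of $\rho_\Delta$ you were trying to establish.

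The paper resolves this by never localizing: its compensated-compactness lemma (Lemma \ref{lmm:CompComp}) uses entropy pairs $Q_i(\cdot, w)$ depending on the limit $w$, for which only $w \in \Lloc{\infty}(\R^+, \BV(\R))$ (Proposition \ref{pp:Compactness1}) is needed, not its piecewise-constant structure. The entropy production created at the cell interfaces, where the numerical flux jumps with $w_\Delta$, is controlled globally through Proposition \ref{pp:FluxAJG} and the dissipation and trace estimates of Lemmas \ref{lmm:CompCompA} and \ref{lmm:CompCompB}, the jump contributions being summable thanks to the $\TV$ bound on $w_\Delta$. Strong convergence of $\rho_\Delta$ thus comes first; the Lax--Wendroff arguments then give \ref{item:WeakFormulation} and the divergence-free property, and only afterwards is \ref{item:regularity} obtained by applying Theorem \ref{th:BeyondPanov} to the limit. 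If you reorganize your proof in that order (and supply the $\Hloc{-1}$ estimate with $w$-dependent entropies, together with the consistency-defect bound of Lemma \ref{lmm:ConsistencyInterfaceFlux} that your entropy-inequality step implicitly uses), the rest of your outline goes through.
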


As an immediate consequence of Theorem \ref{th:ConvergenceScheme}, we obtain an existence result.

\begin{theorem}
    \label{th:ExistenceScheme}
    Assume that $V_{\min}, V_{\max} \in \Lip([0, 1], \R^+)$ satisfy 
    \eqref{eq:VelocitiesAssumption}-\eqref{eq:StrictConcavity}-\eqref{eq:vacuum}. Fix $\rho_o \in \L{\infty}(\R, [\eps, 1])$ and $w_o \in \BV(\R, [0, 1])$ piecewise constant. 
    Then, there exists an entropy solution to \eqref{eq:gsom}-\eqref{eq:Flux} with 
    initial datum $(\rho_o, w_o)$.
\end{theorem}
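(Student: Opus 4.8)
The plan is to obtain existence as a direct corollary of the convergence of the scheme, i.e. of Theorem~\ref{th:ConvergenceScheme}. The key point to observe is that the two statements share exactly the same hypotheses: $V_{\min}, V_{\max} \in \Lip([0,1],\R^+)$ satisfying \eqref{eq:VelocitiesAssumption}--\eqref{eq:StrictConcavity}--\eqref{eq:vacuum}, $\rho_o \in \L{\infty}(\R,[\eps,1])$, and $w_o \in \BV(\R,[0,1])$ piecewise constant. Hence nothing has to be checked on the side of the data, and this class is clearly nonempty (take for instance $\rho_o \equiv 1$ and $w_o \equiv a_o$).

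First I would fix, once and for all, a ratio $\lambda = \Delta t/\Delta x$ for which the CFL condition \eqref{eq:CFL} holds, and let $\Delta x \to 0$. For each such mesh, run the scheme: discretize $(\rho_o, w_o)$ on the cells $I_j$, then inductively pass from $(u_j^n, w_j^n)_j$ to $(u_j^{n+1}, w_j^{n+1})_j$ via the update \eqref{eq:MF2} with the interface fluxes \eqref{eq:InterfaceFluxFirst} (equivalently, via the entropy solution $\cU^n$ of \eqref{eq:schemeStep1} and the average \eqref{eq:MF1}), the numerical characteristics \eqref{eq:NumericalCharacteristics}, and the averaging step \eqref{eq:MFw}. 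As remarked after \eqref{eq:MFw}, for this construction to make sense one must know that $u_j^{n+1} > 0$ ---in fact $u_j^{n+1} \geq \eps$--- so that the speeds $s_{j-1/2}^n$ are well defined; this non-vacuum property, together with the invariance of $[0,1]^2$ and the well-posedness of \eqref{eq:schemeStep1} granted by Theorem~\ref{th:EquivalentDefinitions}, is established in the course of proving Theorem~\ref{th:ConvergenceScheme}, so I take it as known here. This produces well-defined approximate solutions $(\rho_\Delta, w_\Delta) \in \L{\infty}(\R^+\times\R,[0,1])^2$ for every admissible mesh.

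Then I would simply invoke Theorem~\ref{th:ConvergenceScheme}: along a subsequence $\Delta \to 0$, the pair $(\rho_\Delta, w_\Delta)$ converges to some $(\rho, w) \in \L{\infty}(\R^+\times\R,[0,1])^2$ which is an entropy solution of \eqref{eq:gsom}--\eqref{eq:Flux} with initial datum $(\rho_o, w_o)$ in the sense of Definition~\ref{def:gsom}, i.e. which satisfies \ref{item:regularity}, \ref{item:EntropyInequalities} and \ref{item:WeakFormulation}. The mere existence of such a limit is precisely the assertion of Theorem~\ref{th:ExistenceScheme}, which is thereby proved. There is no obstacle specific to this theorem: all the analytic work ---the uniform bounds keeping $\rho_\Delta$ in $[\eps,1]$ and $w_\Delta$ in $[0,1]$, the compensated-compactness argument yielding strong compactness of $(\rho_\Delta)$, the space-$\BV$ control and compactness of $(w_\Delta)$, and the identification of the limit (including the interface remainder term in \eqref{eq:EI}, cf. Remark~\ref{rk:RemainderTerm})--- is carried out once and for all inside the proof of Theorem~\ref{th:ConvergenceScheme}. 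The only thing worth stating explicitly is that the admissible data for the scheme coincide with the data allowed in Theorem~\ref{th:ExistenceScheme}, so that Theorem~\ref{th:ConvergenceScheme} genuinely applies.
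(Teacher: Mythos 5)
Your proposal is correct and follows exactly the paper's route: Theorem~\ref{th:ExistenceScheme} is stated there as an immediate corollary of Theorem~\ref{th:ConvergenceScheme}, whose limit $(\rho,w)$ is by construction an entropy solution in the sense of Definition~\ref{def:gsom}. Your additional remarks (identical hypotheses, well-definedness of the scheme via the non-vacuum bound $u_j^{n+1}\geq\eps$) are consistent with what the paper establishes inside the convergence proof.
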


The optimal assumption on $w_o$ is probably that $w_o$ is piecewise Lipschitz, providing that one can prove that the piecewise Lipschitz regularity is propagated by the transport equation, which is immediate in the case of piecewise constant initial $w_o$, see Theorem \ref{th:BeyondPanov}. Allowing piecewise Lipschitz $w_o$ would also permit to add a source term in the second equation of \eqref{eq:gsom}. That way, a dynamic for the state of orderliness $w$ could be incorporated.

\section{Numerical integrations}

This section contains numerical integrations of \eqref{eq:gsom}, based on the scheme 
described in Section \ref{sec:Main}. First, consider Riemann type initial data
\[
    (\rho_o(x), w_o(x)) = 
    \left\{
        \begin{aligned}
            (\rho_l, w_l) & \text{ if } \; x < 0 \\
            (\rho_r, w_r) & \text{ if } \; x \geq 0,
        \end{aligned}
    \right.
\]

for given $\rho_l, \rho_r \in [\eps, 1]$ and $w_l, w_r \in [0, 1]$. The structure of the numerical solution coincides with what one would obtained solving Riemann problem for \eqref{eq:gsom}, seen as an hyperbolic system, see for instance \cite{GS2018}. 

\vspace{-4mm}

\begin{figure}[!htp]
    \begin{center}
    \includegraphics[scale=0.25]{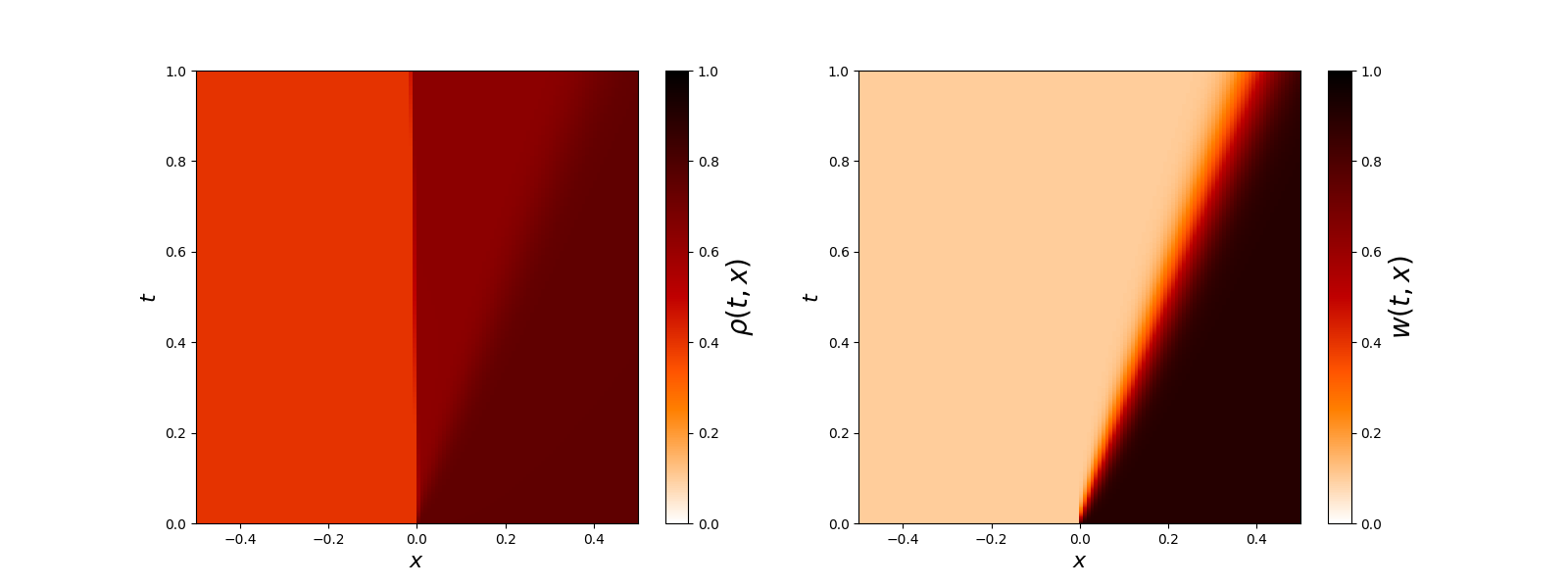}
    \end{center}
    \vspace{-5mm}
    \caption{Shock of the first family followed by a contact discontinuity.}
\end{figure}

\vspace{-4mm}

\begin{figure}[!htp]
    \begin{center}
    \includegraphics[scale=0.25]{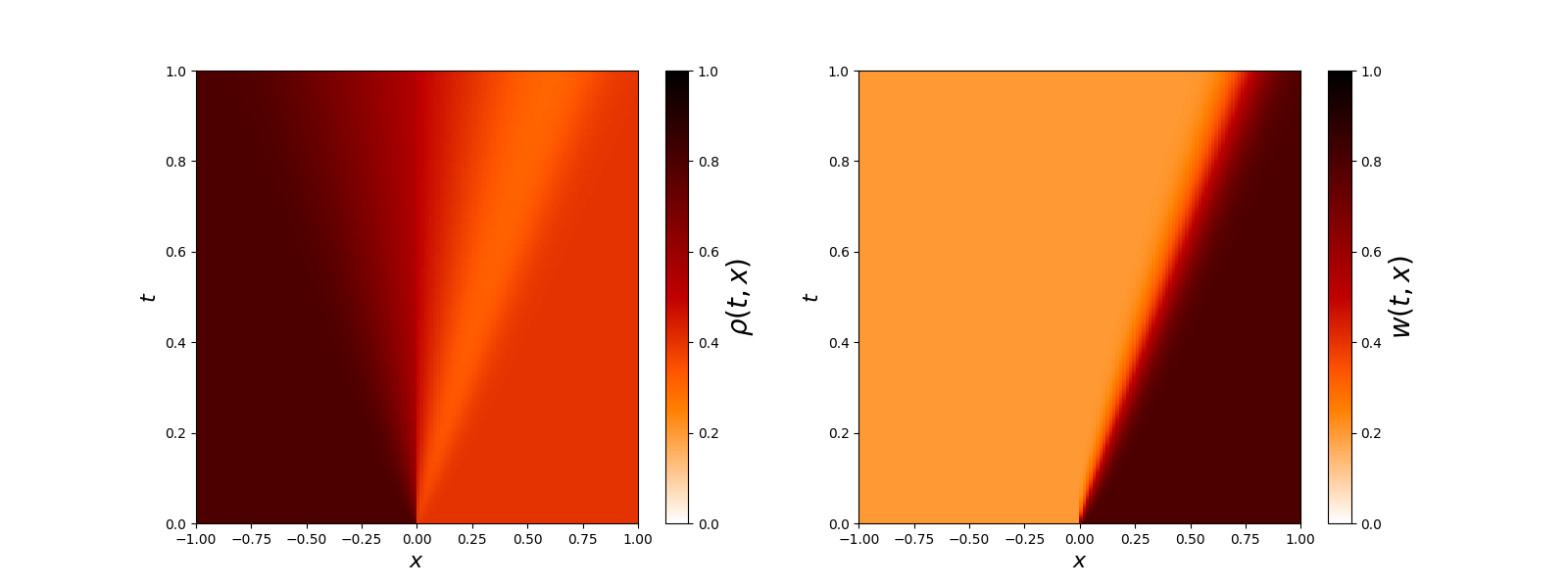}
    \end{center}
    \vspace{-5mm}
    \caption{Rarefaction of the first family followed by a contact discontinuity.}
\end{figure}

\newpage

In the last integration, $w_o$ is chosen piecewise constant. In Figure \ref{fig:pwc1}, we see that for all $t > 0$, $w(t)$ is also piecewise constant, taking the same values as $w_o$, as expected.

\begin{figure}[!htp]
    \begin{center}
    \includegraphics[scale=0.25]{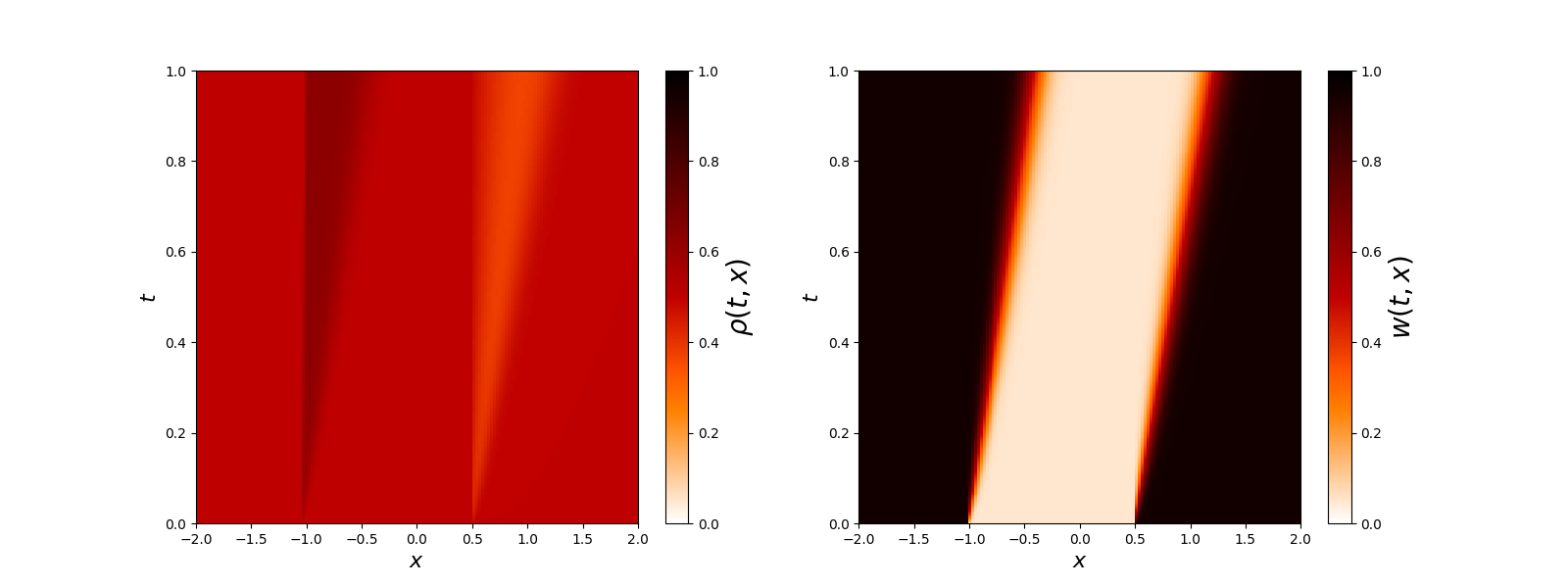}
    \end{center}
    \vspace{-5mm}
    \caption{$w_o$ piecewise constant. We see the constant propagation along the characteristics.}
    \label{fig:pwc1}
\end{figure}

\section{Analytical Proofs}
\label{sec:Proofs}

\subsection{Preliminaries}
\label{ssec:BeyondPanov}

\begin{definition}
    Let $(\rho, V) \in \L{\infty}(\R^+ \times \R,\R^+)^2$ a divergence free vector field and $w_o \in \L{\infty}(\R, [0, 1])$.

    We say that $w \in \L{\infty}(\R^+ \times \R, \R)$ is a weak solution to 
    \begin{empheq}[left=\empheqlbrace]{align}
        \label{eq:PanovStructure}
        \p_t(\rho w) + \p_x (\rho V w) & = 0 \nonumber \\
        w(0, x) & = w_o(x)
    \end{empheq}
    
    if for all test functions $\phi \in \Cc{\infty}(\R^+ \times \R, \R)$, it holds:
    \[
        \int_{0}^{+\infty} \int_{\R} (\rho w) \p_t \phi + (\rho V w) \p_x \phi \; \d{x} \d{t} 
        + \int_\R \rho(0, x) w_o(x) \phi(0, x) \d{x} = 0.
    \]
\end{definition}

\begin{theorem}
    \label{th:BeyondPanov}
    Let $(\rho, V) \in \L{\infty}(\R^+ \times \R,\R^+)^2$ a divergence free vector field and $w_o \in \L{\infty}(\R, [0, 1])$.

    (i) There exists a unique weak solution $w$ to \eqref{eq:PanovStructure} with initial datum $w_o$.   

    (ii) If moreover, $w_o$ is piecewise constant, as described in Definition \ref{def:gsom}, 
    then there exist $M$ ordered curves $y_m \in \Lip(\open{t_m}{T_m}, \R)$ such that setting $\Gamma_m \coloneq \{(t, y_m(t)),\;t \in [t_m, T_m\mathclose[\}$, $w$ is constant in each connected component of $(\R^+ \times \R) \setminus \cup_{m=1}^M \Gamma_m$.
\end{theorem}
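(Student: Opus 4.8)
Part (i) is, in essence, the one-dimensional theory of \cite{Panov2008}, and I would organize it around the stream function. Since, by assumption, $(\rho, \rho V)$ is a bounded divergence-free field on $\R^+ \times \R$ — that is, $\p_t \rho + \p_x(\rho V) = 0$ in the sense of distributions — the Poincar\'e lemma provides a Lipschitz potential $H$ with $\p_x H = \rho \ge 0$ and $\p_t H = -\rho V$; in particular $H(t, \cdot)$ is non-decreasing and $\p_t H + V \p_x H = 0$ as $\L{\infty}$ functions. For existence, mollify $H$ into $H^\eps = H * \varphi_\eps$ and set $\rho^\eps \coloneq \p_x H^\eps = \rho * \varphi_\eps \ge 0$ and $\rho^\eps V^\eps \coloneq -\p_t H^\eps$, so that $(\rho^\eps, \rho^\eps V^\eps)$ is still divergence-free with $\abs{V^\eps} \le \norm{V}_{\L{\infty}}$; the corresponding transport equation, with a mollified datum $w_o^\eps \in [0, 1]$, has a smooth solution $w^\eps$ satisfying $0 \le w^\eps \le 1$ by the maximum principle, and one passes to the limit without difficulty since $\rho^\eps$ and $\p_t H^\eps$ converge strongly in $\Lloc{1}$ while $w^\eps \rightharpoonup w$ weakly-$*$ in $\L{\infty}$. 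Uniqueness is the substantive point, and the key input — the core of Panov's framework — is the renormalization property: if $w$ is a weak solution, then so is $\beta(w)$, with datum $\beta(w_o)$, for every $\beta \in \Ck{1}(\R)$. Applying this to the difference $w = w_1 - w_2$ of two weak solutions of a common Cauchy problem, with $\beta(s) = s^2$, and testing against a cut-off equal to $1$ on a large ball, yields $\int_\R \rho(t, x)\, w(t, x)^2 \, \d{x} = 0$ for a.e. $t$, hence $w_1 = w_2$ on $\{\rho > 0\}$, which is exactly the set on which the Cauchy problem determines the solution.

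For (ii), I would exhibit the solution explicitly through the level sets of $H$ and then invoke the uniqueness from (i). Modifying $w_o$ on the $\rho(0, \cdot)$-negligible set $\{\rho(0, \cdot) = 0\}$ — which is harmless for the Cauchy problem — we may assume $w_o$ is constant on each interval where $H(0, \cdot)$ is constant; then there is a non-decreasing piecewise constant function $g : \R \to [0, 1]$ with $g(H(0, x)) = w_o(x)$, whose jumps sit at the levels $c_m \coloneq H(0, d_m)$. Put $\tilde w(t, x) \coloneq g(H(t, x))$: approximating $g$ by smooth non-decreasing functions and using that $H$ is itself a renormalized solution of the transport equation, one checks that $\tilde w$ is a weak solution of \eqref{eq:PanovStructure} with datum $w_o$, so that $w = \tilde w = g \circ H$. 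The curves are then read off the level sets of $H$: let $\open{t_m}{T_m}$ be the (open) set of times $t$ at which $\{x \in \R : H(t, x) = c_m\}$ reduces to a single finite point, and define $y_m(t)$ to be that point; on each connected component of $(\R^+ \times \R) \setminus \bigcup_{m=1}^M \Gamma_m$, the value $H(t, x)$ stays strictly between two consecutive levels, hence $w = g \circ H$ is constant there.

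The Lipschitz regularity of the $y_m$ follows from a light-cone argument. Along a straight line one has $\frac{\d}{\d t} H(t, x_0 + \mu t) = \rho\,(\mu - V)$ evaluated on that line, which has a fixed sign as soon as $\mu = \pm \norm{V}_{\L{\infty}}$ (recall $0 \le V \le \norm{V}_{\L{\infty}}$); thus $H$ is monotone along the two families of lines of slope $\pm \norm{V}_{\L{\infty}}$, so the graph of $y_m$ cannot leave the forward and backward cones of those slopes issued from any of its points, which gives $\abs{y_m(t) - y_m(s)} \le \norm{V}_{\L{\infty}} \abs{t - s}$.

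The main obstacle is the degeneracy on $\{\rho = 0\}$: there the generalized characteristics are not curves but ``fat'' level sets of $H$, the solution $w$ is only determined $\rho$-almost everywhere, and the set of times at which $\Gamma_m$ is a genuine Lipschitz curve may be a proper sub-interval of $\R^+$ — this is why the curves are only defined on $\open{t_m}{T_m}$ — so one must check, using that the jump set of $w(t, \cdot)$ is non-increasing in time (discontinuities may merge but never split), that $M$ curves still suffice. In the regime relevant to Theorem \ref{th:ConvergenceScheme}, namely $\rho \ge \eps > 0$, all of this collapses: $H(t, \cdot)$ is bi-Lipschitz for every $t$, the levels $c_1 < \dots < c_M$ are distinct, each $y_m$ is defined on all of $\R^+$, and $w(t, \cdot)$ is piecewise constant with jump set exactly $\{y_1(t) < \dots < y_M(t)\}$.
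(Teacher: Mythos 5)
Your argument for part (ii) takes a genuinely different route from the paper's. The paper regularizes the coefficients, $\rho_k \coloneq \rho * \psi_k + \tfrac1k$ and $F_k \coloneq (\rho V) * \psi_k + \tfrac1k$ (so that the velocity $F_k/\rho_k$ is smooth, bounded and defined everywhere), solves the regularized transport equation by characteristics, obtains the jump curves as the characteristics emanating from the points $d_m$, extracts a uniformly Lipschitz limit of these curves, and identifies the limit with the weak solution \emph{via} uniqueness. You instead produce the explicit formula $w = g\circ H$ through the stream function and read the curves off the level sets of $H$. The formula itself is sound: with $B' = \beta$ one has $\rho\,\beta(H) = \p_x B(H)$ and $\rho V \beta(H) = -\p_t B(H)$, so $\beta(H)$ is a weak solution for every smooth $\beta$ by equality of the mixed distributional derivatives of the Lipschitz function $B(H)$, and an approximation of $g$ then yields your $\tilde w$; your light-cone computation is also correct. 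Two side remarks: $g$ has no reason to be non-decreasing (the values $a_m$ are arbitrary), though monotonicity is nowhere needed; and in your sketch of (i) -- which the paper anyway delegates to the cited reference, the renormalization property being precisely its content -- the mollified velocity $V^\eps = -\p_t H^\eps/\rho^\eps$ is a $0/0$ wherever $\rho*\varphi_\eps$ vanishes; the paper's device of adding the same small constant to both components of the field avoids this.

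The genuine gap is in your definition of the curves. You take $\open{t_m}{T_m}$ to be the set of times at which $\{x \,:\, H(t,x) = c_m\}$ is a single point; but (a) this set need not be an interval (vacuum regions can open and later be refilled by mass arriving from the left, so ``singleton times'' can be disconnected), and (b) where the level set is a nondegenerate interval, the representative $g\circ H$ still jumps while no curve has been placed there, so item \ref{item:regularity} fails for your $\Gamma_m$. Concretely, the field given by $\rho = 1$, $V = 0$ on $\{x<0\}$, $\rho = 0$ on $\{0<x<t\}$, $\rho = 1$, $V = 1$ on $\{x>t\}$ is bounded, nonnegative and divergence free (both Rankine--Hugoniot conditions hold), and with $w_o$ jumping at $d_1 = 0$ one gets $c_1 = 0$ and $\{H(t,\cdot)=c_1\} = [0,t]$ for every $t>0$: your $\Gamma_1$ reduces to the single point $(0,0)$, the complement of $\Gamma_1$ is connected, yet $g\circ H$ (with $g$ right-continuous) equals $w_l$ for $x<0$ and $w_r$ for $x\geq 0$, hence is not constant on that component. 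Your closing sentence acknowledges the degeneracy but leaves exactly this point unproved, and as set up it cannot be repaired because the curves sit in the wrong place. The repair stays within your own toolbox: fix $g$ right-continuous and set $y_m(t) \coloneq \inf\{x \,:\, H(t,x)\geq c_m\}$ for the times at which this quantity is finite; the monotonicity of $H$ along lines of slope $0$ and of slope $\norm{V}_{\L{\infty}}$ (the same computation as yours) shows that $y_m$ is non-decreasing and Lipschitz with constant $\norm{V}_{\L{\infty}}$, and by construction $\{g\circ H \text{ takes the right-hand value}\} = \{x \geq y_m(t)\}$, so the constancy on connected components does hold, with $T_m$ the time at which the level $c_m$ leaves the closure of the range of $H(t,\cdot)$ and with curves merging when levels coincide. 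Finally, note that the identification of $g\circ H$ with ``the'' weak solution through (i) holds only $\rho$-a.e.; this caveat is shared with the paper's own proof (the statement amounts to exhibiting a representative with the claimed structure), so it is not an additional gap.
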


The well-posedness stated in Theorem \ref{th:BeyondPanov} \textit{(i)} comes from \cite[Corollary A.8]{GSOM2022}. What we prove below is \textit{(ii)}, establishing that if $w_o$ is piecewise constant, then $w$ is piecewise constant as well.

\begin{proofof}{Theorem \ref{th:BeyondPanov} (ii)}
    For future references, set $F \coloneq \rho V$. Moreover, for simplicity, we prove the result in the case $M=1$. More precisely, let $w_l, w_r \in 0, 1]$ and define 
    \[
        w_o(x) \coloneq
        \left\{ 
        \begin{array}{cl}
            w_l & \text{if} \; x < 0 \\
            w_r & \text{if} \; x \geq 0. 
        \end{array}
        \right.
    \]

    \paragraph{Step 1: approximation.}
    
    Fix $\varphi\in\Cc{\infty}(\R,\R^+)$ a weight test function supported in $[-1,0]$. For all $k \in \N \setminus \{0\}$, consider the function 
    \[
        \psi_k (t, x) \coloneq \frac{\varphi(kt) \varphi(kx)}{k^2} \in \Cc{\infty}(\R^2, \R^+).
    \]

    We call $\rho_k \coloneq \rho * \psi_k + \frac{1}{k}$ and $F_k \coloneq F * \psi_k + \frac{1}{k}$ the  smooth approximations of the coefficients. Note that for all $k \in \N \setminus \{0\}$ and $(t, x) \in \open{0}{+\infty} \times \R$,
	\[
		\rho_k(t, x) 
		= \int_{\R} \int_{\R} \rho(t-s, x-y) \psi_k(s, y) \d{y} \d{s} + \frac{1}{k}
		= \int_{-1/k}^0 \int_{-1/k}^0 \rho(t-s, x-y) \psi_k(s, y) \d{y} \d{s} 
        + \frac{1}{k},
	\]

	and therefore the convolutions are well-defined on $\open{0}{+\infty} \times \R$. 
    
    The sequences $(\rho_k)_k$ and $(F_k)_k$ converge in 
    $\Lloc{1}(\open{0}{+\infty} \times \R, \R)$ to $\rho$ and $F$, respectively, and even if it 
    means taking subsequences, we can assume that the convergence is almost everywhere on 
    $\open{0}{+\infty} \times \R$. Note also that since $\rho \geq 0$, then $\rho_k \geq 1/k > 0$ 
    for all $k \in \N \setminus \{0\}$. 
    
    It is readily checked that $\p_t \rho_k + \p_x F_k = 0$ in the sense of distributions; $\rho_k$ and $F_k$ being smooth, we deduce that the equality holds pointwise. Since $\rho_k$ does not vanish, the function $V_k \coloneq \frac{F_k}{\rho_k}$ is smooth. Moreover, we have the $\L{\infty}$ bound:
    \begin{equation}
        \label{eq:BeyondPanovProof}
        0 \leq V_k
		= \frac{(\rho V) * \psi_k + 1/k}{\rho * \psi_k + 1/k} 
        \leq \norm{V}_{\L{\infty}(\R^+ \times \R)} + 1.
    \end{equation}

    Using the method of characteristics, we can define $w_k \in \L{\infty}(\open{0}{+\infty} \times\R,\R)$ as the unique solution to
    \[
		\left\{
            \begin{aligned}
                \p_t w_k(t, x) + V_k(t, x) \; \p_x w_k(t, x) & = 0 \\
                w_k(0, x) & = w_o(x).
            \end{aligned}
        \right.
	\]

    More precisely, for all $(t, x) \in \open{0}{+\infty} \times \R$, solve the Cauchy problems  
    \[
        \left\{
            \begin{aligned}
                X_k'(s, t, x) & = V_k(s, X_k(s, t, x)) \\
                X_k(t, t, x) & = x
            \end{aligned}
        \right. \qquad \text{and} \qquad
        \left\{
            \begin{aligned}
                u_k'(s, t, x) & = 0 \\
                u_k(0, t, x) & = w_o(X_k(0, t, x)),
            \end{aligned}
        \right.
    \]

    and set $w_k(t, x) \coloneq u_k(t, t, x)$. Immediately, $w_k(t, x) = w_o(X_k(0, t, x)) \in \{w_l, w_r\}$ and more precisely,
    \[
        w_k(t, x) = 
        \left\{ 
        \begin{array}{cl}
            w_l & \text{if} \; x < \gamma_k(t) \\
            w_r & \text{if} \; x > \gamma_k(t),
        \end{array}
        \right.
    \]

    where $\gamma_k \in \Ck{1} \cap \Lip(\R^+, \R)$ is the nondecreasing curve solving 
    \[
        \left\{
            \begin{aligned}
                y'(s) & = V_k(s, y(s)) \\
                y(0) & = 0.
            \end{aligned}
        \right.
    \] 

    \paragraph{Step 2: compactness/convergence.} 

    From the estimate \eqref{eq:BeyondPanovProof}, we deduce that for all $T > 0$, $(\gamma_k)_k$ is bounded in $\W{1}{\infty}(\open{0}{T}, \R)$. Therefore, using a diagonal process, we can construct a subsequence and $y \in \Lip(\open{0}{+\infty}, \R)$ such that $(\gamma_k)_k$ converges to $y$ in $\Czero(\R^+, \R)$. Naturally, $y(0) = 0$, $y$ is nondecreasing and $\norm{y'}_{\L{\infty}(\R^+)} \leq \norm{V}_{\L{\infty}(\R^+ \times \R)} + 1$. Hence, $(w_k)_k$ converges a.e. on $\open{0}{+\infty} \times \R$ to $w$ defined by 
    \[
        w(t, x) = 
        \left\{ 
        \begin{array}{cl}
            w_l & \text{if} \; x < y(t) \\
            w_r & \text{if} \; x > y(t).
        \end{array}
        \right.
    \]

    Since for all $k \in \N \setminus \{0\}$, $w_k$ is a weak solution to  
    \[
		\left\{
            \begin{aligned}
                \p_t (\rho_k w_k) + \p_x (F_k w_k) & = 0 \\
                w_k(0, x) & = w_o(x),
            \end{aligned}
        \right.
	\]

    we deduce that $w$ is the unique weak solution to \eqref{eq:PanovStructure}. The proof is concluded.
\end{proofof}

The transport equation propagates the piecewise constant regularity. It is our intuition that the optimal assumption for $w_o$ is piecewise Lipschitz regularity, but one would need to prove that this regularity is propagated by the transport equation.

\subsection{Discontinuous Flux}
\label{ssec:DiscontinuousFlux}

We recall and adapt for our use  esults regarding conservation laws with discontinuous flux. For the purpose of this section, let us fix $f_l, f_r \in \Lip([0, 1], \R^+)$ two strictly concave and bell-shaped fluxes. The complete computations can be found in \cite[Section 3.1]{Sylla2024}. 

To fix the ideas, and to highlight the link with \eqref{eq:Flux}, we assume that $f_l \geq f_r$. Consider:
\begin{equation}
    \label{eq:CLDiscontinuous}
    \p_t u + \p_x \left(F(x, u) \right) = 0, \quad
    F(x, u) \coloneq \left\{ 
        \begin{aligned}
            f_l(u) & \quad \text{if} \; x \leq 0 \\
            f_r(u) & \quad \text{if} \; x > 0.
        \end{aligned}
    \right.
\end{equation}

Since the works of \cite{AG2003, AJG2004,AGM2005,AKR2011, Diehl1995,GR1992,KRT2003,KT2004,Towers2000}, it is now well known that additionally to the conservation of mass (Rankine-Hugoniot condition)
\[
    \text{for a.e.} \; t > 0, \quad f_l(u(t, 0-)) = f_r(u(t, 0+)),
\]

an entropy criterion must be imposed at the interface to select one solution. This 
choice is often guided by physical consideration. Among the most common criteria, we 
can cite: the minimal jump condition \cite{GR1992, KMRWellBalanced,KMRTriangular}, the 
vanishing viscosity criterion \cite{BGS2019} or the flux maximization \cite{AGM2005}.

If $\Phi_l$, resp. $\Phi_r$, denotes the Kruzhkov entropy flux associated with 
$f_{l}$, resp. with $f_r$, then in this section, define $\Phi = \Phi(x, u)$ as the 
Kruzhkov entropy flux associated with $F$:
\[
    \forall x \in \R, \; \forall a, b \in \R, \quad 
    \Phi(x, a, b) \coloneq \sgn(a-b) (F(x, a) - F(x, b)) 
    = \left\{ 
        \begin{aligned}
            \Phi_l(a, b) & \quad \text{if} \; x \leq 0 \\
            \Phi_r(a, b) & \quad \text{if} \; x > 0.
        \end{aligned}
    \right.
\]

For the study of \eqref{eq:CLDiscontinuous}, we follow \cite{AKR2011}, where the traces 
of the solution at the interface $\{x=0\}$ are explicitly treated. It will be useful to 
have a name for the critical points of $f_{l,r}$, say $\alpha_{l, r}$.
Notice that for all $y \in \mathopen[0, \max f_r]$, the equation $f_l(u) = y$ admits 
exactly two solutions, $0 \leq S_l^-(y) < \alpha_l < S_l^+(y) \leq 1$. 
This motivates the following definition.

\begin{definition}
    \label{def:Germ}
    The admissibility germ for \eqref{eq:CLDiscontinuous} is the subset $\cG$ defined 
    as the union of the following subsets:
    \begin{equation}
        \label{eq:Germ}
        \begin{aligned}
            \cG_1 & \coloneq \left\{ (k_l, k_r) \in [0,1]^2 \; : \; k_r \leq \alpha_r, \; 
            k_l = S_l^-(f_r(k_r)) \right\} \\[5pt]
            \cG_2 & \coloneq \left\{ (k_l, k_r) \in [0,1]^2 \; : \; k_r \geq \alpha_r, \; 
            k_l = S_l^+(f_r(k_r)) \right\} \\[5pt]
            \cG_3 & \coloneq \left\{ (k_l, k_r) \in [0,1]^2 \; : \; k_r > \alpha_r, \; 
            k_l = S_l^-(f_r(k_r)) \right\}.
        \end{aligned}
    \end{equation} 
\end{definition}

The germ contains all the possible traces along $\{x=0\}$ of the solutions 
to \eqref{eq:CLDiscontinuous}. Notice that by construction, any couple in the germ 
satisfies the Rankine-Hugoniot condition. Conversely, some couples verifying the 
Rankine-Hugoniot condition have been excluded from the germ, more precisely the ones 
belonging to 
\[
    \left\{ (k_l, k_r) \in [0,1]^2 \; : \; k_r < \alpha_r, \; k_l = S_l^+ (f_r(k_r)) 
    \right\}. 
\]

The reason lies in the following proposition, see in particular \eqref{eq:MaximalGerm}.

\begin{proposition}
    \label{pp:GermL1DMax}
    Define $\cG$ as \eqref{eq:Germ}. Then, $\cG$ is a maximal $\L{1}$-dissipative germ, 
    meaning that the following points hold.

    (i) For all $(u_l, u_r) \in \cG$, $f_l(u_l) = f_r(u_r)$.

    (ii) Dissipative inequality:
    \begin{equation}
        \label{eq:DissipativeGerm}
        \forall (u_l, u_r), (k_l, k_r) \in \cG, \quad
        \Phi_l(u_l, k_l) - \Phi_r(u_r, k_r) \geq 0.
    \end{equation}
    
    (iii) Maximality condition: let $(u_l, u_r) \in [0, 1]^2$ such that 
    $f_l(u_l) = f_r(u_r)$. Then 
    \begin{equation}
        \label{eq:MaximalGerm}
        \forall (k_l, k_r) \in \cG, \; 
        \Phi_l(u_l, k_l) - \Phi_r(u_r, k_r) \geq 0 \implies (u_l, u_r) \in \cG.
    \end{equation}
\end{proposition}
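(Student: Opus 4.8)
The plan is to verify the three points by direct case analysis, exploiting the explicit description of $\cG$ in terms of the branches $S_l^\pm$ of $f_l$ and the critical point $\alpha_r$ of $f_r$.

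Point (i) is immediate from the definitions: in each of $\cG_1, \cG_2, \cG_3$, the left state is defined as $k_l = S_l^\pm(f_r(k_r))$, and by the very definition of $S_l^\pm$ one has $f_l(S_l^\pm(y)) = y$; applying this with $y = f_r(k_r)$ gives $f_l(k_l) = f_r(k_r)$.

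For point (ii), I would first reduce the quantity $\Phi_l(u_l, k_l) - \Phi_r(u_r, k_r)$ to a more tractable form. Writing $c \coloneq f_l(u_l) = f_r(u_r)$ and $c' \coloneq f_l(k_l) = f_r(k_r)$ (using (i)), one expands
\[
    \Phi_l(u_l, k_l) - \Phi_r(u_r, k_r)
    = \sgn(u_l - k_l)(c - c') - \sgn(u_r - k_r)(c - c').
\]
So the sign of the expression is governed by the sign of $(c - c')$ multiplied by $\bigl(\sgn(u_l - k_l) - \sgn(u_r - k_r)\bigr)$. The key structural fact is that, for a bell-shaped concave flux, membership of a pair in the germ pins down on which branch ($\leq \alpha$ or $\geq \alpha$) each state lives, and hence controls the signs $\sgn(u_l - k_l)$ and $\sgn(u_r - k_r)$ in terms of the ordering of the flux values $c, c'$. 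I would tabulate the possibilities according to which of $\cG_1, \cG_2, \cG_3$ contains $(u_l, u_r)$ and which contains $(k_l, k_r)$ (nine cases, several of which collapse by symmetry or because $c = c'$ trivially makes the expression zero), and check in each that the product is $\geq 0$. The recurring mechanism: if $(k_l,k_r)\in\cG_1$ then $k_r\le\alpha_r$ and $k_l$ is on the \emph{decreasing} branch $S_l^-$, so both $\rho\mapsto f_r$ near $k_r$ and... the monotonicity of $S_l^-$ ($y\mapsto S_l^-(y)$ is decreasing) links $\sgn(u_l-k_l)$ to $-\sgn(c-c')$ when $u_l$ is also on that branch, etc.; the cases involving $\cG_2$ vs.\ $\cG_3$ (both having $k_r \geq \alpha_r$ but different left branches) are where the asymmetric design of the germ does its work, and I expect those to be the delicate ones.

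For point (iii), the maximality condition, I would argue by contraposition in the spirit of \cite{AKR2011}: take $(u_l, u_r) \in [0,1]^2$ with $f_l(u_l) = f_r(u_r) =: c$ but $(u_l, u_r) \notin \cG$. By the remark preceding the proposition, the Rankine–Hugoniot pairs not in $\cG$ are exactly those with $u_r < \alpha_r$ and $u_l = S_l^+(c)$. For such a pair I must exhibit a single $(k_l, k_r) \in \cG$ with $\Phi_l(u_l, k_l) - \Phi_r(u_r, k_r) < 0$. The natural candidate is to take $k_r$ slightly larger, pushing into $\cG_3$ (where $k_r > \alpha_r$, $k_l = S_l^-(f_r(k_r))$), or the "vertical" connection at $k_r = \alpha_r$; then $u_l = S_l^+(c)$ lies on the increasing branch above $k_l = S_l^-(c') $, giving $\sgn(u_l - k_l) = +1$, while $u_r < \alpha_r < k_r$ gives $\sgn(u_r - k_r) = -1$, so the bracket $\sgn(u_l-k_l)-\sgn(u_r-k_r) = 2$, and choosing $k_r$ so that $c' = f_r(k_r) > c$ (possible since $u_r<\alpha_r$ means $f_r$ is increasing there, but we need $c'<\max f_r$) makes $(c - c') < 0$, yielding a strictly negative value. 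One must check the endpoint/degenerate configurations (e.g. $c = \max f_r$, or $u_r = 0$) separately, but these are boundary cases.

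The main obstacle is point (ii): organizing the nine-case table cleanly and making sure the branch/sign bookkeeping is airtight, particularly distinguishing the roles of $\cG_2$ and $\cG_3$, which share the constraint on $k_r$ but differ in the left branch — this is precisely the subtlety that forced certain Rankine–Hugoniot pairs out of the germ, and the dissipativity computation is where one sees \emph{why} that exclusion is the right one. Points (i) and (iii) are comparatively short once (ii)'s reduction of $\Phi_l - \Phi_r$ to a product of signs is in hand.
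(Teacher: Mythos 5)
Your overall route is the same as the paper's: point (i) by construction of $S_l^{\pm}$, point (ii) by a case-by-case sign study, and point (iii) by contraposition, testing the excluded Rankine--Hugoniot pair $u_l = S_l^+(f_r(u_r))$, $u_r < \alpha_r$ against a germ element of $\cG_1$ with $k_r \in \mathopen]u_r, \alpha_r]$ to produce the strictly negative value $2(f_r(u_r) - f_r(k_r))$ --- this is exactly the argument the paper relies on (it defers (ii)--(iii) to \cite[Proposition 3.2]{Sylla2024}). Your reduction $\Phi_l(u_l,k_l) - \Phi_r(u_r,k_r) = (c-c')\bigl(\sgn(u_l-k_l) - \sgn(u_r-k_r)\bigr)$, valid once both pairs satisfy Rankine--Hugoniot, is a genuinely useful simplification: since the expression is symmetric under swapping $(u_l,u_r)$ with $(k_l,k_r)$, one may assume $c > c'$ and then only the configuration $u_l < k_l$ together with $u_r > k_r$ needs to be excluded, which collapses the ``nine cases'' to three (according to which $\cG_i$ contains $(k_l,k_r)$).

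One correction in your sketch of (ii), though: for a concave bell-shaped $f_l$, the branch $S_l^-$ lies on the \emph{increasing} part $[0,\alpha_l]$ of $f_l$, and $y \mapsto S_l^-(y)$ is \emph{increasing} (while $S_l^+$ is decreasing), not the other way around as you state. With the correct monotonicity the exclusion is immediate: if $(k_l,k_r) \in \cG_1$ then $k_l = S_l^-(c') < S_l^-(c) \leq u_l$ or $k_l < \alpha_l < S_l^+(c) = u_l$, so $u_l < k_l$ is impossible; if $(k_l,k_r) \in \cG_2 \cup \cG_3$ then $u_r > k_r \geq \alpha_r$ would force $c = f_r(u_r) < f_r(k_r) = c'$, contradicting $c > c'$. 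Since you left the case table unexecuted and the one mechanism you describe is sign-reversed, you should carry out this (short) verification explicitly; otherwise the proposal is sound and matches the paper.
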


\begin{proof}
    Direct adaptation from \cite[Proposition 3.2]{Sylla2024}.
\end{proof}

The theory of \cite{AKR2011} proposes an abstract framework for the study 
of \eqref{eq:CLDiscontinuous}. On the other hand, the authors of 
\cite{AGM2005, AJG2004} give a formula for the flux at the interface for Riemann 
problems of \eqref{eq:CLDiscontinuous}. In the following proposition, we link the two 
points of view. Let us adopt the notations:
\begin{equation}
    \label{eq:MinMax}
    \forall a, b \in \R, \quad a \wedge b \coloneq \min\{a, b\} 
    \quad \text{and} \quad a \vee b \coloneq \max\{a, b\}.
\end{equation}

\begin{proposition}
    \label{pp:FluxAJG}
    For all $(u_l, u_r) \in [0, 1]^2$, define the interface flux:
    \begin{equation}
        \label{eq:FluxAJG}
        \fint(u_l, u_r) \coloneq 
        \min\{f_l(u_l \wedge \alpha_l), f_r(\alpha_r \vee u_r)\},
    \end{equation}

    and the remainder term:
    \begin{equation}
        \label{eq:RemainderTerm}
        \cR(u_l, u_r) \coloneq 
        \abs{\fint(u_l, u_r) - f_l(u_l)} + \abs{\fint(u_l, u_r) - f_r(u_r)}.
    \end{equation}
    
    Then the following points hold.

    (i) For all $(u_l,u_r)\in [0, 1]^2$, $(u_l,u_r)\in \cG \iff \cR(u_l,u_r) = 0$.

    (ii) For all $(u_l, u_r) \in [0, 1]^2$ and for all $(k_l, k_r) \in \cG$, 
    $\Phi_r(u_r, k_r) - \Phi_l(u_l, k_l) \leq \cR(u_l, u_r)$. 

    (iii) For all $k \in [0, 1]$, $\cR(k, k) = \abs{f_l(k) - f_r(k)}$.
\end{proposition}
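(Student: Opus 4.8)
For convenience write $d(u_l)\coloneq f_l(u_l\wedge\alpha_l)$ and $s(u_r)\coloneq f_r(u_r\vee\alpha_r)$, so that $\fint(u_l,u_r)=\min\{d(u_l),s(u_r)\}$ by \eqref{eq:FluxAJG}, with $d(u_l)\ge f_l(u_l)$ (equality iff $u_l\le\alpha_l$), $s(u_r)\ge f_r(u_r)$ (equality iff $u_r\ge\alpha_r$), and note that $f_l\ge f_r$ forces $\max f_l\ge\max f_r$. For \emph{(i)}: $\cR(u_l,u_r)=0$ means exactly $\fint(u_l,u_r)=f_l(u_l)=f_r(u_r)$, which in particular forces the Rankine--Hugoniot identity $f_l(u_l)=f_r(u_r)$. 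Assuming this identity, one inspects \eqref{eq:FluxAJG}: if $u_l\le\alpha_l$ or $u_r\ge\alpha_r$ then one of $d(u_l)=f_l(u_l)$, $s(u_r)=f_r(u_r)$ holds and the minimum collapses onto the common value (the other entry dominating, using $f_l\ge f_r$ to compare $\max f_{l}$ or $\max f_{r}$ with it); whereas if $u_l>\alpha_l$ and $u_r<\alpha_r$ then $\fint(u_l,u_r)=\min\{\max f_l,\max f_r\}=\max f_r>f_r(u_r)$ strictly. Thus, among Rankine--Hugoniot couples, $\cR=0$ fails precisely on $\{u_l>\alpha_l,\ u_r<\alpha_r\}$, and under the identity this region is exactly $\{u_r<\alpha_r,\ u_l=S_l^+(f_r(u_r))\}$, which by Definition \ref{def:Germ} is the set of Rankine--Hugoniot couples \emph{not} in $\cG$; this proves (i).

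For \emph{(iii)}: I would show $f_r(k)\le\fint(k,k)\le f_l(k)$ for every $k$, after which the two absolute values in \eqref{eq:RemainderTerm} telescope, $\cR(k,k)=\bigl(f_l(k)-\fint(k,k)\bigr)+\bigl(\fint(k,k)-f_r(k)\bigr)=f_l(k)-f_r(k)=\abs{f_l(k)-f_r(k)}$. The lower bound is immediate from $d(k),s(k)\ge f_r(k)$ (the first since $\max f_l\ge\max f_r\ge f_r(k)$). For the upper bound: if $k\le\alpha_l$ then $\fint(k,k)\le d(k)=f_l(k)$; if $k\ge\alpha_l$ then $\fint(k,k)\le s(k)=f_r(k\vee\alpha_r)\le f_l(k\vee\alpha_r)\le f_l(k)$, the last step because $f_l$ is nonincreasing on $[\alpha_l,1]$ and $k\le k\vee\alpha_r$.

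The main point is \emph{(ii)}, which I would prove by interposing the resolved interface traces. Given $(u_l,u_r)$, put $\Phi\coloneq\fint(u_l,u_r)$ and set $p_l\coloneq u_l\wedge\alpha_l$ if $d(u_l)\le s(u_r)$ and $p_l\coloneq S_l^+(\Phi)$ otherwise, and symmetrically $p_r\coloneq u_r\vee\alpha_r$ if $s(u_r)\le d(u_l)$ and $p_r\coloneq S_r^-(\Phi)$ otherwise (writing $S_r^\pm$ for the analogues of $S_l^\pm$ relative to $f_r$). A short case check gives $f_l(p_l)=f_r(p_r)=\Phi$, and $p_l\le\alpha_l$ or $p_r\ge\alpha_r$ (the binding side), hence $\fint(p_l,p_r)=\Phi$, i.e. $\cR(p_l,p_r)=0$, so $(p_l,p_r)\in\cG$ by part (i); moreover $u_l\to p_l$ is an admissible scalar $f_l$-Riemann wave with all speeds $\le 0$ and $p_r\to u_r$ an admissible scalar $f_r$-Riemann wave with all speeds $\ge 0$. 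The crucial scalar estimate is that across an admissible Riemann wave of one-signed speed, with left state $a$, right state $b$, flux $g$ and Kruzhkov flux $\Phi_g$, one has $\Phi_g(b,c)-\Phi_g(a,c)\le\abs{g(a)-g(b)}$ for every $c\in[0,1]$; this is proved by splitting on the position of $c$ relative to $a,b$ and using strict concavity of $g$ (the one-signed-speed hypothesis excludes the transonic rarefaction, the only configuration on which this would fail). Applying it to the left wave with $c=k_l$, to the right wave with $c=k_r$, and inserting the germ dissipativity \eqref{eq:DissipativeGerm} for the admissible pair $(p_l,p_r),(k_l,k_r)\in\cG$,
\begin{align*}
\Phi_r(u_r,k_r)-\Phi_l(u_l,k_l)
&=\bigl[\Phi_r(u_r,k_r)-\Phi_r(p_r,k_r)\bigr]+\bigl[\Phi_r(p_r,k_r)-\Phi_l(p_l,k_l)\bigr]\\
&\qquad{}+\bigl[\Phi_l(p_l,k_l)-\Phi_l(u_l,k_l)\bigr]\\
&\le\abs{\Phi-f_r(u_r)}+0+\abs{\Phi-f_l(u_l)}=\cR(u_l,u_r)
\end{align*}
by \eqref{eq:RemainderTerm}.

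I expect (ii) to be the obstacle: one must pin down $(p_l,p_r)$ through several sub-cases (which constraint binds, and on which monotonicity branch each of $u_l,u_r$ sits), confirm the wave-speed signs in each, and verify the scalar estimate in every position of $c$; the hypothesis $f_l\ge f_r$ is used exactly to ensure $\fint(p_l,p_r)=\Phi$, hence $(p_l,p_r)\in\cG$. One may alternatively avoid the Riemann-solver vocabulary by taking the explicit formulas above as the definition of $p_l,p_r$ and checking everything by hand — more self-contained, but heavier in cases — or invoke the known interface analysis of \cite{AGM2005,AJG2004} for the properties of the resolved traces.
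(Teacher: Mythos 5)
Your proposal is correct, and on the main item \emph{(ii)} it takes a genuinely different route from the paper. The paper settles \emph{(i)}--\emph{(ii)} by direct adaptation of \cite[Proposition 3.3]{Sylla2024}, i.e.\ an explicit formula for $\cR(u_l,u_r)$ followed by a case-by-case comparison of $\Phi_r(u_r,k_r)-\Phi_l(u_l,k_l)$ with that formula over the monotonicity branches of $f_l, f_r$ and the pieces of the germ; your \emph{(i)} and \emph{(iii)} are essentially that elementary computation. For \emph{(ii)} you instead factor the estimate through the resolved interface states: you exhibit $(p_l,p_r)$ with $f_l(p_l)=f_r(p_r)=\fint(u_l,u_r)$, observe via \emph{(i)} that $(p_l,p_r)\in\cG$, absorb the interface contribution with the dissipativity \eqref{eq:DissipativeGerm} of Proposition \ref{pp:GermL1DMax}, and bound the two remaining differences by the one-wave estimate $\Phi_g(b,c)-\Phi_g(a,c)\leq\abs{g(a)-g(b)}$; that estimate is indeed valid here, since the pairs $(u_l,p_l)$ and $(p_r,u_r)$ produced by your formulas are either of shock type, where concavity alone gives $g(c)\geq\min\{g(a),g(b)\}$ between the states, or lie on a single monotone branch, where $g(c)\leq\max\{g(a),g(b)\}$ -- I checked the sub-cases, including the binding-side claim ensuring $\cR(p_l,p_r)=0$. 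Your route is more conceptual and modular: it explains $\cR$ as the total flux variation across the two one-sided waves of the interface Riemann solution and replaces the long enumeration by one scalar lemma, at the price of leaning on Proposition \ref{pp:GermL1DMax}\,\emph{(ii)} (which the paper also only cites) and on the structural checks you flag; the paper's cited computation is heavier but self-contained and needs no wave interpretation. Two cosmetic points: admissibility of the waves is not actually needed in the shock-type case (concavity suffices), so the scalar lemma can be stated purely in terms of where the interval between the states sits relative to the critical point, and your shorthand $\Phi$ for $\fint(u_l,u_r)$ clashes with the Kruzhkov flux notation of the paper.
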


\begin{proof}
    \textit{(i)}-\textit{(ii)} follow from direct adaptations of \cite[Proposition 3.3]{Sylla2024} while \textit{(iii)} is obtained with straightforward computations.
\end{proof}

We are now give the definition of solutions for \eqref{eq:CLDiscontinuous}. The first definition is a Kruzhkov type entropy inequality where the solution is tested not against constant data, but against piecewise constant data. The second definition relies on the treatment of the traces of the solution along $\{x=0\}$. More precisely, for $u \in \L{\infty}(\open{0}{+\infty} \times \R)$, the strong traces of $u$ along $\{x=0\}$, if they exist, are the functions $\gamma_{L}u, \gamma_{R}u \in \L{\infty}(\open{0}{+\infty}, \R)$ such that 
\[
    \underset{r \to 0+}{\mathrm{ess \; lim}} \; \frac{1}{r} \int_{-r}^0 \abs{u(t, x) - \gamma_{L}u(t)} \d{x} = 0, \quad \underset{r \to 0+}{\mathrm{ess \; lim}} \; \frac{1}{r} \int_0^r \abs{u(t, x) - \gamma_{R}u(t)} \d{x} = 0.
\]

\begin{theorem}
    \label{th:EquivalentDefinitions}
    Define $\cG$ as \eqref{eq:Germ} and $\cR$ as \eqref{eq:RemainderTerm}. 
    Let $u_o \in \L{\infty}(\R, \R)$ and $u \in \L{\infty}(\open{0}{+\infty} \times \R, \R)$. Then the statements $\mathbf{(A)}$ and $\mathbf{(B)}$ below are equivalent.

    $\mathbf{(A)}$ For all $\varphi \in \Cc{\infty}(\R^+ \times \R, \R^+)$ and for 
    all $(k_l, k_r) \in \R^2$, with $\kappa \coloneq k_l \1_{\open{-\infty}{0}} + k_r \1_{\open{0}{+\infty}}$, it holds:
    \begin{empheq}{align}
    \label{eq:DiscontinuousEI1}
    \int_0^{+\infty} \int_\R 
    \abs{u - \kappa(x)} \p_t \varphi + \Phi(x, u, \kappa(x)) \p_x \varphi 
    \; \d{x} \d{t} 
	+ \int_\R \abs{u_o(x) - \kappa(x)} \varphi(0, x) \d{x} \nonumber \\ 
	+ \int_0^{+\infty} \cR(k_l, k_r) \varphi(t, 0) \d{t} \geq 0.
    \end{empheq}

    $\mathbf{(B)}$ The two following points are verified.

    (i) For all $\varphi \in \Cc{\infty}(\R^+ \times \R \backslash \{0\}, \R^+)$ 
    and for all $(k_l, k_r) \in \R^2$, with $\kappa \coloneq k_l \1_{\open{-\infty}{0}} + k_r \1_{\open{0}{+\infty}}$, it holds:
    \begin{empheq}{align}
    \label{eq:DiscontinuousEI2}
    \int_0^{+\infty} \int_\R 
    \abs{u - \kappa(x)} \p_t \varphi + \Phi(x, u, \kappa(x))  \p_x \varphi \; \d{x} \d{t} 
    + \int_\R \abs{u_o(x) - \kappa(x)} \varphi(0, x) \d{x} \geq 0.
    \end{empheq}

    (ii) For a.e. $t \in \open{0}{+\infty}$, $(\gamma_L u (t), \gamma_R u (t)) \in \cG$.
    \smallskip 

    When $\mathbf{(A)}$ or $\mathbf{(B)}$ holds, we say that $u$ is an entropy solution to \eqref{eq:CLDiscontinuous} with initial datum $u_o$.
\end{theorem}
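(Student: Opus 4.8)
The statement is the concrete incarnation, for the pair $(\cG,\cR)$ constructed above, of the abstract $\L{1}$-dissipative germ theory of \cite{AKR2011}; the plan is to follow \cite[Section 3.1]{Sylla2024}. The single ingredient needed on top of Propositions \ref{pp:GermL1DMax} and \ref{pp:FluxAJG} is the existence of strong one-sided traces of $u$ along $\{x=0\}$: restricting the test functions in $\mathbf{(A)}$ to those vanishing near $\{x=0\}$ (which annihilates the $\cR$-term) and, likewise, reading $\mathbf{(B)}$(i), one sees that in both cases $u$ is a Kruzhkov entropy solution of $\p_t u + \p_x(f_l(u)) = 0$ on $\open{0}{+\infty}\times\open{-\infty}{0}$ and of $\p_t u + \p_x(f_r(u)) = 0$ on $\open{0}{+\infty}\times\open{0}{+\infty}$; as $f_l,f_r$ are strictly concave, hence genuinely nonlinear, the strong-trace theorem for scalar conservation laws provides $\gamma_L u,\gamma_R u \in \L{\infty}(\open{0}{+\infty},\R)$ (in case $\mathbf{(B)}$ these are moreover assumed in $\cG$ by (ii)).

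\textbf{$\mathbf{(A)}\Rightarrow\mathbf{(B)}$.} Point (i) is the restriction just described. For (ii), I would test \eqref{eq:DiscontinuousEI1} with $\varphi(t,x) = \theta(t)\zeta_\delta(x)$, where $\theta\in\Cc{\infty}(\open{0}{+\infty},\R^+)$ is arbitrary and $\zeta_\delta\in\Cc{\infty}(\R,[0,1])$ equals $1$ on $[-\delta,\delta]$, vanishes off $[-2\delta,2\delta]$, and is monotone on $[-2\delta,-\delta]$ and on $[\delta,2\delta]$. As $\delta\to0$ the time-derivative and initial terms vanish (their integrands concentrate on $\{x=0\}$), the $\cR$-term stays $\cR(k_l,k_r)\int_0^{+\infty}\theta\,\d{t}$, and the space-derivative term, split over $x<0$ and $x>0$ and paired with $\zeta_\delta'$, converges — by the very definition of the strong traces, together with dominated convergence in $t$ since all integrands are bounded — to $\int_0^{+\infty}\theta(t)\bigl(\Phi_l(\gamma_L u(t),k_l) - \Phi_r(\gamma_R u(t),k_r)\bigr)\d{t}$. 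As $\theta\geq0$ is arbitrary this gives, for a.e. $t$ and (first for a countable dense family, then by continuity of $\Phi_l,\Phi_r$) for all $(k_l,k_r)\in\R^2$,
\[
    \Phi_l(\gamma_L u(t),k_l) - \Phi_r(\gamma_R u(t),k_r) + \cR(k_l,k_r) \geq 0.
\]
Specializing to $(k_l,k_r)=(0,0)$ and $(1,1)$ — for which $\cR(k,k) = \abs{f_l(k)-f_r(k)}$ by Proposition \ref{pp:FluxAJG}(iii), and which vanishes since $f_l,f_r$ are bell-shaped — produces the Rankine–Hugoniot identity $f_l(\gamma_L u(t)) = f_r(\gamma_R u(t))$. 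Then restricting $(k_l,k_r)$ to $\cG$, where $\cR$ vanishes by Proposition \ref{pp:FluxAJG}(i), leaves $\Phi_l(\gamma_L u(t),k_l) - \Phi_r(\gamma_R u(t),k_r)\geq0$ for all $(k_l,k_r)\in\cG$, and the maximality condition \eqref{eq:MaximalGerm} forces $(\gamma_L u(t),\gamma_R u(t))\in\cG$, i.e. (ii).

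\textbf{$\mathbf{(B)}\Rightarrow\mathbf{(A)}$.} Fix $\varphi\in\Cc{\infty}(\R^+\times\R,\R^+)$ and $(k_l,k_r)$, and split $\varphi = \varphi(1-\zeta_\delta) + \varphi\zeta_\delta$ with $\zeta_\delta$ as above. Denoting by $\mathcal{I}(\psi)$ the left-hand side of \eqref{eq:DiscontinuousEI1} with test function $\psi$ — a linear functional of $\psi$ — we have $\mathcal{I}(\varphi) = \mathcal{I}(\varphi(1-\zeta_\delta)) + \mathcal{I}(\varphi\zeta_\delta)$. Since $\varphi(1-\zeta_\delta)$ vanishes near $\{x=0\}$, its $\cR$-term is zero and $\mathcal{I}(\varphi(1-\zeta_\delta))\geq0$ by $\mathbf{(B)}$(i). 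For the remaining piece, the same limit computation as before (now using $\varphi(t,x)\to\varphi(t,0)$ uniformly on the shrinking support) yields
\[
    \mathcal{I}(\varphi\zeta_\delta) \limit{\delta}{0} \int_0^{+\infty} \bigl(\Phi_l(\gamma_L u(t),k_l) - \Phi_r(\gamma_R u(t),k_r) + \cR(k_l,k_r)\bigr)\varphi(t,0)\,\d{t},
\]
whose integrand is nonnegative: indeed $(\gamma_L u(t),\gamma_R u(t))\in\cG$ by $\mathbf{(B)}$(ii), so Proposition \ref{pp:FluxAJG}(ii) — applied with the free pair and the germ pair interchanged, which is legitimate because $\Phi_l,\Phi_r$ are symmetric in their arguments — gives $\Phi_r(\gamma_R u(t),k_r) - \Phi_l(\gamma_L u(t),k_l) \leq \cR(k_l,k_r)$. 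Hence the limit is $\geq0$, and since $\mathcal{I}(\varphi)\geq\mathcal{I}(\varphi\zeta_\delta)$ for every $\delta$, letting $\delta\to0$ gives $\mathcal{I}(\varphi)\geq0$, i.e. \eqref{eq:DiscontinuousEI1}.

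\textbf{Main obstacle.} The crux is the trace analysis supporting the whole scheme: deducing strong one-sided traces at the interface from genuine nonlinearity of $f_l,f_r$ relies on the strong-trace theorem rather than on elementary bounds, and the $\delta\to0$ passages must be justified with care under the time integral. The rest is bookkeeping around the germ properties already recorded in Propositions \ref{pp:GermL1DMax} and \ref{pp:FluxAJG}.
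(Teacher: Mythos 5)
Your argument is correct and is essentially the paper's own route: the paper simply defers to \cite[Theorem 3.18]{AKR2011} together with the observation that genuine nonlinearity of the strictly concave fluxes yields strong one-sided traces at $\{x=0\}$, and your write-up is precisely the unpacking of that cited argument, using Proposition \ref{pp:GermL1DMax} (maximality) and Proposition \ref{pp:FluxAJG} (i)--(iii) exactly as the germ framework prescribes. The only minor caveat, inherited from the statement itself, is that $\cR$ and Proposition \ref{pp:FluxAJG} are only formulated on $[0,1]^2$, so constants $(k_l,k_r)$ outside $[0,1]$ require the usual truncation remark.
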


\begin{proof}
    See the proof of \cite[Theorem 3.18]{AKR2011}. 
    It is worth mentioning that either \eqref{eq:DiscontinuousEI1} or \eqref{eq:DiscontinuousEI2} implies that $u$ admit strong traces along $\{x=0\}$. Indeed, since $f_l$ and $f_r$ are genuinely nonlinear in the sense that
    \[
        \forall s \in \R, \quad \text{meas} \bigl(\left\{p \in \R \; : \; 
        f_{l,r}'(p) = s \right\} \bigr) = 0,
    \]

    existence of strong traces is obtained from the works of \cite{Vasseur2001, NPS2018}.
\end{proof}

We now state the well-posedness result for \eqref{eq:CLDiscontinuous}.

\begin{theorem}
    \label{th:WellPosednessDiscontinuous}
    Let $f_l, f_r \in \Lip([0, 1], \R^+)$ be two strictly concave, bell-shaped and ordered 
    functions. 
    \begin{enumerate}[label=\bf(D\arabic*)]
        \item \label{item:WellPosednessDiscontinuous} 
        Fix $u_o \in \L{\infty}(\R, [0, 1])$. 
        Then there exist a unique entropy solution to 
        \eqref{eq:CLDiscontinuous} with initial datum $u_o$.

        \item \label{item:StabilityDiscontinuous} 
        Let $u_o, v_o \in \L{\infty}(\R, [0, 1])$. 
        We denote by $u$, resp. $v$, the entropy solution to \eqref{eq:CLDiscontinuous} 
        with initial datum $u_o$, resp. $v_o$. Then, there exists $L > 0$ such that for 
        all $R > 0$ and for all $t > 0$,
        \begin{equation}
            \label{eq:StabilityDiscontinuous}
            \begin{aligned}
                \int_{\abs{x} \leq R} \abs{u(t, x) - v(t, x)} \d{x}
                & \leq \int_{\abs{x} \leq R+Lt} \abs{u_o(x) - v_o(x)} \d{x}.
            \end{aligned}
        \end{equation}
    \end{enumerate}
\end{theorem}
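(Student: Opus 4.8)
The plan is to follow the Andreianov–Karlsen–Risebro germ framework, exactly as in \cite{AKR2011, Sylla2024}, exploiting that $\cG$ is a maximal $\L{1}$-dissipative germ (Proposition \ref{pp:GermL1DMax}). Throughout I would work with the characterization $\mathbf{(B)}$ of Theorem \ref{th:EquivalentDefinitions}: an entropy solution admits strong one-sided traces $\gamma_L u,\gamma_R u$ along $\{x=0\}$ (guaranteed by the genuine nonlinearity of $f_l,f_r$), it satisfies Kruzhkov's entropy inequalities on each half-plane $\{\pm x>0\}$, where $F(x,\cdot)$ is a single Lipschitz function, and its trace pair $(\gamma_L u,\gamma_R u)$ lies in $\cG$ for a.e. $t$.

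For \ref{item:StabilityDiscontinuous} (which contains uniqueness), I would take two entropy solutions $u$ and $v$ with data $u_o,v_o$ and run Kruzhkov's doubling of variables separately on $\open{0}{+\infty}\times\open{-\infty}{0}$ and on $\open{0}{+\infty}\times\open{0}{+\infty}$, where $\abs{\p_\rho F}\leq L$. This yields the local Kato inequality $\p_t\abs{u-v}+\p_x\Phi(x,u,v)\leq 0$ in $\mathcal{D}'\bigl(\open{0}{+\infty}\times(\R\setminus\{0\})\bigr)$, together with the usual finite-speed bound away from the interface. It then remains to absorb the two flux contributions coming from the sides $\{x=0^-\}$ and $\{x=0^+\}$: testing against $\varphi\in\Cc{\infty}(\R^+\times\R,\R^+)$ and collecting those boundary terms, they sum to $-\int_0^{+\infty}\bigl(\Phi_l(\gamma_L u,\gamma_L v)-\Phi_r(\gamma_R u,\gamma_R v)\bigr)\varphi(t,0)\,\d{t}$, which is $\leq 0$ by the dissipativity inequality \eqref{eq:DissipativeGerm}, since both trace pairs belong to $\cG$. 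Hence the Kato inequality holds globally across $\{x=0\}$, the interface creates no extra $\L{1}$ distance, and a standard cone argument gives \eqref{eq:StabilityDiscontinuous}; taking $u_o=v_o$ yields uniqueness in \ref{item:WellPosednessDiscontinuous}.

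For existence in \ref{item:WellPosednessDiscontinuous}, I would construct a solution by a monotone finite volume scheme using, at $\{x=0\}$, the interface flux $\fint$ of \eqref{eq:FluxAJG} and, away from it, the Godunov fluxes of $f_l$ and $f_r$. Under a CFL condition this scheme is order-preserving and keeps the approximations in $[0,1]$, and Proposition \ref{pp:FluxAJG}(i)--(iii) shows it is $\cG$-consistent: the discrete entropy inequalities carry the remainder $\cR(k_l,k_r)$, which in the limit becomes the term $\int_0^{+\infty}\cR(k_l,k_r)\varphi(t,0)\,\d{t}$ of \eqref{eq:DiscontinuousEI1}. Compactness of the approximations is obtained either from a $\BV_{\mathrm{loc}}$ estimate away from $\{x=0\}$ plus a bound on the interface traces, or — in line with the compensated compactness strategy used elsewhere in this paper — from the genuine nonlinearity of $f_l,f_r$; passing to the limit, the limit satisfies $\mathbf{(A)}$, hence is an entropy solution by Theorem \ref{th:EquivalentDefinitions}. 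Alternatively, one may simply invoke the abstract fact from \cite{AKR2011} that a maximal $\L{1}$-dissipative germ is complete: Proposition \ref{pp:GermL1DMax} supplies exactly the required hypotheses, so a $\cG$-entropy solution exists for every datum in $\L{\infty}(\R,[0,1])$.

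The routine parts — the doubling of variables, the monotonicity and $\L{\infty}$ stability of the scheme, the cone computation — are classical; the only genuinely delicate points are the existence of strong traces (already handled in Theorem \ref{th:EquivalentDefinitions} via genuine nonlinearity) and, if one wants a self-contained convergence proof rather than citing completeness of the germ, obtaining compactness of the approximations near the flux-discontinuity interface without a global $\BV$ bound. That last point is the main obstacle, and it is essentially the same difficulty the paper faces for the full system \eqref{eq:gsom}.
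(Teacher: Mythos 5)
Your proposal follows essentially the same route as the paper, whose proof is simply to cite \cite{AKR2011} (Theorem 3.11) for the stability estimate \eqref{eq:StabilityDiscontinuous} — i.e.\ exactly the germ-dissipativity/doubling argument you sketch — and to obtain existence by convergence of a finite volume scheme with the interface flux $\fint$ in the spirit of \cite{AJG2004}, which is your main existence argument as well. One caution on your side remark: in \cite{AKR2011} completeness of a maximal $\L{1}$-dissipative germ is a separate property that must be verified (here it follows from the explicit description of $\cG$ or from the Riemann solver attached to $\fint$), so the constructive scheme-based argument, not the ``maximal hence complete'' shortcut, is the safe way to conclude existence.
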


\begin{proof}
    The proof of the stability bound can be found in \cite[Theorem 3.11]{AKR2011}. 
    Regarding the existence, one can build a simple 
    finite volume scheme and prove its convergence, see for instance \cite{AJG2004}. 
    The flux to put at the interface $\{x = 0\}$ in the scheme is precisely $\fint$. 
\end{proof}

\subsection{Convergence of the numerical scheme}
\label{ssec:ProofScheme}

The proof of Theorem \ref{th:ConvergenceScheme} is split into several sections. First, we establish to lemmas.

\begin{lemma}
    \label{lmm:critical_point}
    Assume that $V_{\min}, V_{\max} \in \Lip([0, 1], \R^+)$ satisfy 
    \eqref{eq:VelocitiesAssumption}-\eqref{eq:StrictConcavity}. Then, there exists a unique function $\alpha \in \Lip([0, 1], [0, 1])$ such that for all $w\in [0, 1]$, 
    $\p_\rho f(\alpha(w), w) = 0$.
\end{lemma}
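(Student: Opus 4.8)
The plan is to handle the equation $\p_\rho f(\alpha(w),w)=0$ separately for each fixed $w$ by a one-variable concavity argument, and then to upgrade the resulting pointwise map $w\mapsto\alpha(w)$ to a Lipschitz function by a quantitative, implicit-function-type estimate. \emph{Step 1 (the flux vanishes at both endpoints, hence has an interior maximum).} By \eqref{eq:Flux} we have $f(0,w)=0$, and by \eqref{eq:Velocity} together with $V_{\min}(1)=V_{\max}(1)=0$ from \eqref{eq:VelocitiesAssumption} we also have $f(1,w)=\sV(1,w)=0$, for every $w\in[0,1]$. Since \eqref{eq:StrictConcavity} makes $\rho\mapsto f(\rho,w)$ strictly concave on $[0,1]$, writing $\rho=(1-\rho)\cdot 0+\rho\cdot 1$ gives
\[
    f(\rho,w)>(1-\rho)f(0,w)+\rho f(1,w)=0,\qquad \rho\in\open{0}{1}.
\]
Thus $f(\cdot,w)$ is continuous on the compact $[0,1]$, positive on $\open{0}{1}$ and zero at the endpoints, so it attains its maximum at a point of $\open{0}{1}$, at which $\p_\rho f(\cdot,w)$ must vanish.

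\emph{Step 2 (pointwise existence and uniqueness of $\alpha(w)$).} Fix $w\in[0,1]$. By \eqref{eq:StrictConcavity}, $\rho\mapsto\p_\rho f(\rho,w)$ is continuous and strictly decreasing on $[0,1]$, hence has at most one zero; by Step 1 it has at least one, lying in $\open{0}{1}$, which we call $\alpha(w)$. (Equivalently: $\p_\rho f(0,w)>0$ — otherwise $\p_\rho f(\cdot,w)<0$ on $\open{0}{1}$, forcing $f(1,w)<f(0,w)=0$, impossible — and likewise $\p_\rho f(1,w)<0$, so the intermediate value theorem applies.) This defines a map $\alpha\colon[0,1]\to\open{0}{1}\subset[0,1]$ with $\p_\rho f(\alpha(w),w)=0$, and by uniqueness of the zero it is the only such map.

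\emph{Step 3 (Lipschitz continuity).} Two ingredients. First, since $\p_\rho^2 f$ is continuous on the compact $[0,1]^2$ and, by \eqref{eq:StrictConcavity}, strictly negative there, there is $c>0$ with $\p_\rho^2 f\leq -c$ on $[0,1]^2$. Second, by \eqref{eq:Flux}--\eqref{eq:Velocity} the flux is affine in $w$, $f(\rho,w)=f_{\min}(\rho)+w\,(f_{\max}(\rho)-f_{\min}(\rho))$ with $f_{\min}(\rho)=\rho V_{\min}(\rho)$ and $f_{\max}(\rho)=\rho V_{\max}(\rho)$, so $\p_\rho f(\rho,w)=f_{\min}'(\rho)+w\,(f_{\max}'(\rho)-f_{\min}'(\rho))$, where $f_{\min},f_{\max}$ are Lipschitz on $[0,1]$ (products of bounded Lipschitz functions); set $L'=\norm{f_{\min}'}_{\infty}+\norm{f_{\max}'}_{\infty}<+\infty$. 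Now, for $w_1,w_2\in[0,1]$, using $\p_\rho f(\alpha(w_i),w_i)=0$,
\[
    0=\bigl(\p_\rho f(\alpha(w_1),w_1)-\p_\rho f(\alpha(w_2),w_1)\bigr)+\bigl(\p_\rho f(\alpha(w_2),w_1)-\p_\rho f(\alpha(w_2),w_2)\bigr).
\]
The second parenthesis equals $(w_1-w_2)\bigl(f_{\max}'(\alpha(w_2))-f_{\min}'(\alpha(w_2))\bigr)$, hence is at most $L'\abs{w_1-w_2}$ in absolute value; the first, by the mean value theorem in $\rho$, equals $\p_\rho^2 f(\xi,w_1)\,(\alpha(w_1)-\alpha(w_2))$ for some $\xi\in[0,1]$, hence is at least $c\,\abs{\alpha(w_1)-\alpha(w_2)}$ in absolute value. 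Therefore $c\,\abs{\alpha(w_1)-\alpha(w_2)}\leq L'\abs{w_1-w_2}$, i.e. $\alpha\in\Lip([0,1],[0,1])$ with constant $L'/c$. (Alternatively, since $\p_\rho^2 f(\alpha(w),w)\neq 0$, the implicit function theorem already yields $\alpha\in\Ck{1}$ with $\alpha'(w)=-\,\p_w\p_\rho f(\alpha(w),w)/\p_\rho^2 f(\alpha(w),w)$, and the same two bounds give $\norm{\alpha'}_{\infty}\leq L'/c$.)

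\emph{Main obstacle.} None of the steps is deep; the only point requiring care is the uniform-in-$w$ control in Step 3 — producing $c>0$ with $-\p_\rho^2 f\geq c$, which is a compactness argument and is precisely why \eqref{eq:StrictConcavity} is imposed as a strict pointwise inequality rather than mere concavity, and the boundedness of $\p_w\p_\rho f=f_{\max}'-f_{\min}'$, which is where the Lipschitz regularity of $V_{\min},V_{\max}$ enters. Steps 1--2 are the standard maximum-of-a-strictly-concave-function argument.
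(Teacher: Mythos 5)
Your argument is correct, but it is more explicit than the paper's, whose entire proof is the single line ``Application of the Implicit Function Theorem.'' Your Steps 1--2 replace the IFT by a direct global argument (endpoint values $f(0,w)=f(1,w)=0$ plus strict concavity give an interior maximum, and strict monotonicity of $\p_\rho f(\cdot,w)$ gives uniqueness), which is actually a useful supplement: the IFT by itself is local, and one still needs exactly your monotonicity/endpoint observations to know that a zero exists for every $w\in[0,1]$, that it is unique, and that the local branches glue into one function on all of $[0,1]$. Your Step 3 is a quantitative, hands-on version of the Lipschitz dependence that the IFT would provide, exploiting the affine dependence of $f$ on $w$ to get the explicit constant $L'/c$ --- a small bonus over the paper's formulation. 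The only point to flag is that your compactness argument producing $c>0$ with $\p_\rho^2 f\leq -c$ tacitly uses continuity of $\p_\rho^2 f$, which assumption \eqref{eq:StrictConcavity} does not literally state (the velocities are only assumed Lipschitz); but the paper's IFT proof needs the same implicit regularity of $\p_\rho f$, so this is a shared reading of the hypotheses rather than a gap in your argument. In fact, since $\p_\rho^2 f(\rho,w)=(1-w)f_{\min}''(\rho)+w f_{\max}''(\rho)$ is affine in $w$, it suffices to have uniform negative bounds for $f_{\min}''$ and $f_{\max}''$ alone, which slightly weakens what you need from compactness.
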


\begin{proof}
    Application of the Implicit Function Theorem.
\end{proof}

Using Lemma \ref{lmm:critical_point}, one can rewrite the interface flux given by \eqref{eq:InterfaceFluxFirst} as
\[
    F_{j+1/2}^n (u_l, u_r) = \min \left\{ f_j^n (u_l \wedge \alpha_j^n), \;
    f_{j+1}^n (\alpha_{j+1}^n \vee u_r) \right\}, \quad \alpha_j^n \coloneq \alpha(w_j^n). 
\]

It is worth mentioning that $F_{j+1/2}^n$ is a monotone, 
locally Lipschitz, numerical flux, see \cite[Section 4]{AJG2004}. Also, remark that in 
general, $F_{j+1/2}^n$ is not consistent, meaning that for all $k\in [0, 1]$, 
$F_{j+1/2}^n (k, k)$ is not equal to $f_j^n (k)$ nor to $f_{j+1}^n (k)$. 
However, we can prove the following.

\begin{lemma}
    \label{lmm:ConsistencyInterfaceFlux}
    Assume that $V_{\min}, V_{\max} \in \Lip([0, 1], \R^+)$ satisfy 
    \eqref{eq:VelocitiesAssumption}-\eqref{eq:StrictConcavity}. Then, 
    \[
        \exists \mu > 0, \; 
        \forall k \in [0, 1], \; \forall n \in \N, \; \forall j \in \Z, \quad 
        \abs{F_{j+1/2}^n (k, k) - f_j^n (k)} \leq \mu \abs{w_{j+1}^n - w_j^n}. 
    \]
\end{lemma}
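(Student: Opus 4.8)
The plan is a case analysis on the position of $k$ relative to the two critical points $\alpha_j^n = \alpha(w_j^n)$ and $\alpha_{j+1}^n = \alpha(w_{j+1}^n)$ furnished by Lemma \ref{lmm:critical_point}. Write $w = w_j^n$, $w' = w_{j+1}^n$, $\alpha = \alpha(w)$, $\alpha' = \alpha(w')$, and introduce the value function $\Psi(w) \coloneq \max_{\rho \in [0,1]} f(\rho, w) = f(\alpha(w), w)$. I will use three uniform Lipschitz facts, each with a finite constant independent of $n,j,k$: (a) $f$ is Lipschitz on $[0,1]^2$ since $V_{\min}, V_{\max} \in \Lip([0,1], \R^+)$, hence $w \mapsto f(k,w)$ is $C_1$-Lipschitz uniformly in $k$; (b) $\Psi = f(\alpha(\cdot), \cdot)$ is $C_2$-Lipschitz as a composition of Lipschitz maps; (c) $\alpha$ is $C_3$-Lipschitz. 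I keep $L = \sup_{[0,1]^2} \abs{\p_\rho f}$ from \eqref{eq:CFL}. With the reformulation stated just above the lemma, $F_{j+1/2}^n(k,k) = \min\{f(k \wedge \alpha, w),\, f(k \vee \alpha', w')\}$.

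First, if $k \le \alpha$ then $f(k \wedge \alpha, w) = f(k,w) = f_j^n(k)$, so $0 \le f_j^n(k) - F_{j+1/2}^n(k,k) = 0 \vee \bigl(f_j^n(k) - f(k \vee \alpha', w')\bigr)$. If in addition $k \ge \alpha'$, this equals $0 \vee \bigl(f(k,w) - f(k,w')\bigr) \le C_1 \abs{w - w'}$; if $k < \alpha'$, it equals $0 \vee \bigl(f_j^n(k) - \Psi(w')\bigr) \le 0 \vee \bigl(\Psi(w) - \Psi(w')\bigr) \le C_2 \abs{w - w'}$, using $f_j^n(k) \le \Psi(w)$.

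Second, if $k > \alpha$ then $f(k \wedge \alpha, w) = \Psi(w) \ge f_j^n(k)$, so $F_{j+1/2}^n(k,k) = \min\{\Psi(w),\, f(k \vee \alpha', w')\}$. If $k \ge \alpha'$, then $f(k \vee \alpha', w') = f(k,w')$ and $F_{j+1/2}^n(k,k) - f_j^n(k) = \min\{\Psi(w) - f_j^n(k),\, f(k,w') - f(k,w)\}$ is a minimum of a nonnegative quantity and a quantity of absolute value at most $C_1\abs{w - w'}$, hence lies in $[-C_1\abs{w-w'}, C_1\abs{w-w'}]$. If $k < \alpha'$, then $f(k \vee \alpha', w') = \Psi(w')$, so $F_{j+1/2}^n(k,k) = \min\{\Psi(w), \Psi(w')\}$ and $\abs{F_{j+1/2}^n(k,k) - \Psi(w)} \le C_2\abs{w - w'}$; moreover $\alpha < k < \alpha'$ forces $0 \le k - \alpha \le \alpha' - \alpha \le C_3\abs{w - w'}$, and since $\p_\rho f(\cdot, w) \le 0$ on $[\alpha, 1]$ by \eqref{eq:StrictConcavity}, $0 \le \Psi(w) - f_j^n(k) = \int_\alpha^k (-\p_\rho f(\rho, w))\,\d\rho \le L\,C_3\abs{w - w'}$, so the triangle inequality gives $\abs{F_{j+1/2}^n(k,k) - f_j^n(k)} \le (C_2 + L C_3)\abs{w - w'}$.

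Collecting the cases yields the claim with $\mu \coloneq C_2 + L C_3$ (any common upper bound of the constants works). The only step that is not a one-line consequence of the Lipschitz regularity of $f$ on $[0,1]^2$ is the last subcase, where $\alpha_j^n$ and $\alpha_{j+1}^n$ straddle $k$ and the interface flux saturates at $\min\{\Psi(w_j^n), \Psi(w_{j+1}^n)\}$: there one must combine the Lipschitz continuity of $\alpha$ (to place $k$ within $O(\abs{w_{j+1}^n - w_j^n})$ of the maximizer of $f_j^n$) with the uniform bound $L$ on $\p_\rho f$ to control $\Psi(w_j^n) - f_j^n(k)$. I expect this to be the main, though still mild, obstacle.
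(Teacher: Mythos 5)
Your proof is correct and follows essentially the same route as the paper's: a case analysis on the position of $k$ relative to the critical points, combining the affine dependence of $f$ on $w$ (giving the constant $\sup_p\abs{f_{\max}(p)-f_{\min}(p)}$) with the Lipschitz continuity of $\alpha$ times the bound $L$ on $\p_\rho f$ for the straddling case; the only cosmetic differences are that you avoid the paper's WLOG ordering $f_j^n \geq f_{j+1}^n$ and work through $\Psi(w)=f(\alpha(w),w)$. Just state the final constant as a common upper bound, e.g. $\mu=\max\{C_1,C_2\}+LC_3$, since $C_2+LC_3$ alone need not dominate the $C_1$ appearing in your subcases 1a and 2a.
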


\begin{proof}
    Let $(n, j) \in \N \times \Z$ and $k \in [0, 1]$. To fix the ideas, suppose that 
    $f_{j}^n \geq f_{j+1}^n$, the case $f_j^n \leq f_{j+1}^n$ being entirely similar. Recall that
    \[
        F_{j+1/2}^n(k, k) =
        \left\{ 
            \begin{array}{cclc}
            \min\{ f_j^n (k), f_{j+1}^n (\alpha_{j+1}^n) \} & \text{if} 
            & k \leq \alpha_j^n, \alpha_{j+1}^n \quad & \text{(I)} \\
            f_{j+1}^n (\alpha_{j+1}^n) & \text{if} 
            & \alpha_{j}^n \leq k \leq \alpha_{j+1}^n \quad & \text{(II)} \\
            f_{j+1}^n (k) & \text{if} 
            & k \geq \alpha_{j+1}^n  \quad & \text{(III)}
            \end{array}
        \right.
    \]

    The proof reduces to a case by case study. Set 
    $\delta \coloneq \abs{F_{j+1/2}^n(k, k) - f_{j}^n (k)}$.

    \textbf{Case (I)} If $F_{j+1/2}^n(k, k) = f_{j}^n(k)$, then $\delta = 0$. 
    Otherwise, $F_{j+1/2}^n(k, k) = f_{j+1}^n (\alpha_{j+1}^n)$, meaning that 
    $f_{j}^n(k) \geq f_{j+1}^n (\alpha_{j+1}^n)$. Therefore,
    \[
        \delta = f_j^n (k) - f_{j+1}^n (\alpha_{j+1}^n)
        \leq f_j^n(k) - f_{j+1}^n (k)
        = \abs{w_{j+1}^n - w_j^n} \; (f_{\max}(k) - f_{\min}(k)).
    \]

    \textbf{Case (II)} In that case, the estimate follows from the fact that 
    \[
        \begin{aligned}
            \abs{f_{j+1}^n (\alpha_{j+1}^n) - f_j^n (k)}
            & \leq \abs{f_{j+1}^n (\alpha_{j+1}^n) - f_{j+1}^n (k)} + \abs{f_{j+1}^n (k) - f_j^n (k)} \\
            & \leq \abs{\alpha_{j+1}^n - \alpha_j^n} \sup_{0 \leq p, \omega \leq 1} 
            \abs{\p_\rho f(p, \omega)} + \abs{w_{j+1}^n - w_j^n} \; (f_{\max}(k) - f_{\min}(k)) \\
            & \leq \left( \norm{\alpha'}_{\L{\infty}([0, 1])} \sup_{0 \leq p, \omega \leq 1} \abs{\p_\rho f(p, \omega)} + (f_{\max}(k) - f_{\min}(k)) \right) \abs{w_{j+1}^n - w_j^n}.
        \end{aligned}
    \]

    \textbf{Cases (III)} In that case, we can bound $\delta$ by 
    $\abs{w_{j+1}^n - w_j^n} \; (f_{\max}(k) - f_{\min}(k))$.

    Therefore, the following constant is suitable:
    \[
        \mu \coloneq \norm{\alpha'}_{\L{\infty}([0, 1])} \sup_{0 \leq p, \omega \leq 1} \abs{\p_\rho f(p, \omega)} + \sup_{p\in [0, 1]} \abs{f_{\max}(p) - f_{\min}(p)}.
    \]
\end{proof}

It will be useful to write \eqref{eq:MF2} under the form 
\[
    u_{j}^{n+1} = H_j^n(u_{j-1}^n, u_j^n, u_{j+1}^n),
\]

where $H_j^n = H_j(a, b, c)$ is defined with the right-hand side of \eqref{eq:MF2}.

\subsubsection{Stability and BV propagation}

\begin{lemma}
    \label{lmm:StabilityScheme}
    Assume that $V_{\min}, V_{\max} \in \Lip([0, 1], \R^+)$ satisfy 
    \eqref{eq:VelocitiesAssumption}-\eqref{eq:vacuum}. Fix 
    $\rho_o \in \L{\infty}(\R, [\eps, 1])$ and $w_o \in \L{\infty}(\R, [0, 1])$. 
    Then under the CFL condition \eqref{eq:CFL}, for all $n \in \N$, we have:
    \begin{equation}
        \label{eq:StabilityScheme}
        \forall j \in \Z, \quad \eps \leq u_j^n \leq 1 \quad \text{and} \quad 
        0 \leq w_{j}^{n} \leq 1.
    \end{equation}    
\end{lemma}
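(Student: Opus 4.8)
The statement asserts that the scheme preserves the invariant region $[\eps,1]\times[0,1]$. I would prove this by induction on $n$. The base case $n=0$ is immediate since the initial data $\rho_o$ and $w_o$ take values in $[\eps,1]$ and $[0,1]$ respectively, hence so do their cell averages $u_j^0, w_j^0$.

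For the inductive step, assume $\eps\le u_j^n\le 1$ and $0\le w_j^n\le 1$ for all $j$. I would treat the two updates separately. For $u_j^{n+1}$: the key point is that the update comes from the entropy solution $\cU^n$ of the discontinuous-flux Cauchy problem \eqref{eq:schemeStep1}, followed by averaging \eqref{eq:MF1}. Since $\rho^n$ takes values in $[\eps,1]\subset[0,1]$ and the flux $f(\cdot,\omega^n(x))$ is Lipschitz and bell-shaped, the maximum principle for entropy solutions to conservation laws with discontinuous flux (Theorem \ref{th:WellPosednessDiscontinuous}) gives $\cU^n(t,x)\in[0,1]$, hence the average $u_j^{n+1}\in[0,1]$. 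To get the lower bound $u_j^{n+1}\ge\eps$, I would use the marching formula \eqref{eq:MF2} with the interface flux \eqref{eq:InterfaceFluxFirst}: because $F_{j+1/2}^n(u_l,u_r)=\min\{\god_j^n(u_l,\eps),\god_{j+1}^n(1,u_r)\}$ caps the outgoing flux at $\god_j^n(u_j^n,\eps)$ and floors the incoming flux at $\god_j^n(1,u_{j-1}^n)$, one checks via monotonicity and consistency of the Godunov flux that $H_j^n(u_{j-1}^n,u_j^n,u_{j+1}^n)\ge H_j^n(\eps,\eps,\eps)$. Since $f(\eps,w)=\eps V_{\min}(\eps)=f_{\min}(\eps)$ is independent of $w$ by \eqref{eq:vacuum}, the state $\eps$ is a "stationary" value: $F_{j+1/2}^n(\eps,\eps)=f(\eps,\cdot)$ for every $j$, so $H_j^n(\eps,\eps,\eps)=\eps$. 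This is exactly the reason the CFL constant carries the factor $1/\eps$, and it is where I expect the main bookkeeping to be.

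For $w_j^{n+1}$: the update \eqref{eq:MFw} is the convex combination
\[
    w_j^{n+1}=(1-\lambda s_{j-1/2}^n)w_j^n+\lambda s_{j-1/2}^n w_{j-1}^n,
\]
so it suffices to show the coefficient $\lambda s_{j-1/2}^n\in[0,1]$; then $w_j^{n+1}\in[0,1]$ follows from $w_{j-1}^n,w_j^n\in[0,1]$. Nonnegativity of $s_{j-1/2}^n$ follows from $F_{j-1/2}^n\ge 0$ (the Godunov fluxes of nonnegative bell-shaped fluxes are nonnegative) together with $u_j^{n+1}\ge\eps>0$ — which also retroactively justifies that the numerical characteristic speed in \eqref{eq:NumericalCharacteristics} is well-defined, as flagged in the text. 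For the upper bound $\lambda s_{j-1/2}^n\le 1$, write $\lambda s_{j-1/2}^n=\lambda F_{j-1/2}^n(u_{j-1}^n,u_j^n)/u_j^{n+1}$ and bound the numerator by $\lambda\,F_{j-1/2}^n\le\lambda\,\god_j^n(1,u_j^n)\le\lambda\,f_j^n(u_j^n)\wedge\cdots$; more simply, since $F_{j-1/2}^n(u_{j-1}^n,u_j^n)\le\god_j^n(1,u_j^n)$ and the Godunov flux satisfies $\god_j^n(1,u_j^n)\le\max_{[u_j^n,1]}f_j^n$, one estimates $F_{j-1/2}^n\le L\,u_j^n/$ something — the cleanest route is to note $F_{j-1/2}^n(u_{j-1}^n,u_j^n)\le f(u_j^{n+1}\text{-side value})$ and use that $\lambda L\le 1/2$ plus the lower bound on $u_j^{n+1}$, or directly: $u_j^{n+1}=u_j^n-\lambda(F_{j+1/2}^n-F_{j-1/2}^n)\ge -\lambda F_{j+1/2}^n+F_{j-1/2}^n\cdot\lambda\cdot(1/\lambda-\ldots)$. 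I would organize this by showing $\lambda F_{j-1/2}^n(u_{j-1}^n,u_j^n)\le u_j^{n+1}$ directly from \eqref{eq:MF2}: indeed $u_j^{n+1}=u_j^n-\lambda F_{j+1/2}^n+\lambda F_{j-1/2}^n\ge\lambda F_{j-1/2}^n$ provided $u_j^n\ge\lambda F_{j+1/2}^n$, and $\lambda F_{j+1/2}^n(u_j^n,u_{j+1}^n)\le\lambda\,\god_j^n(u_j^n,\eps)\le\lambda\, f_j^n(u_j^n)\le\lambda L\,u_j^n\le u_j^n$ using $f_j^n(u_j^n)=u_j^n\sV(u_j^n,w_j^n)\le L u_j^n$ (since $\sV\le L$ as $V_{\min},V_{\max}$ and hence $\sV$ are bounded by... actually one uses $|\p_\rho f|\le L$ and $f(0,\cdot)=0$) and $\lambda L\le 1$. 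Combining, $\lambda s_{j-1/2}^n\le 1$, which closes the induction.

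The main obstacle is the lower bound $u_j^{n+1}\ge\eps$: it is the one place where Assumption \eqref{eq:vacuum} and the precise form \eqref{eq:InterfaceFluxFirst} of the interface flux (the appearance of the cutoff values $\eps$ and $1$) are essential, and it requires carefully combining monotonicity of $H_j^n$ in each argument with the fact that $\eps$ is a fixed point of the scheme. Everything else — the upper bounds and the nonnegativity — is routine given the monotonicity and consistency properties of the Godunov flux recalled after Lemma \ref{lmm:critical_point}, together with the CFL condition \eqref{eq:CFL}.
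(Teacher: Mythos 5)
Your plan is essentially the paper's proof: induction on $n$, monotonicity of $H_j^n$ under the factor $2$ in \eqref{eq:CFL}, the fixed-point property $H_j^n(\eps,\eps,\eps)=\eps$ coming from \eqref{eq:vacuum} for the lower bound (the paper gets the upper bound from $H_j^n(1,1,1)=1$ rather than the continuous maximum principle, an immaterial difference), and the observation that \eqref{eq:MFw} is a convex combination for the $w$-bound. Two small corrections: first, the intermediate inequality $\god_j^n(u_j^n,\eps)\leq f_j^n(u_j^n)$ is false when $u_j^n>\alpha_j^n$ (the Godunov flux then equals the maximum $f_j^n(\alpha_j^n)$); the chain is repaired by writing $F_{j+1/2}^n(u_j^n,u_{j+1}^n)\leq f_j^n(u_j^n\wedge\alpha_j^n)\leq L\,(u_j^n\wedge\alpha_j^n)\leq L\,u_j^n$, after which your conclusion $\lambda F_{j+1/2}^n\leq u_j^n$, hence $\lambda s_{j-1/2}^n\leq 1$, stands. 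Second, the factor $1/\eps$ in \eqref{eq:CFL} is not what drives the lower bound $u_j^{n+1}\geq\eps$ (that uses only the factor $2$ together with \eqref{eq:vacuum}); it is used, in the paper's reading, to bound $\lambda s_{j-1/2}^n=\lambda F_{j-1/2}^n/u_j^{n+1}\leq \lambda L/\eps\leq 1$ in the $w$-update --- a bound your direct argument via \eqref{eq:MF2} in fact obtains without invoking $1/\eps$ at all.
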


\begin{proof}
    Clearly, \eqref{eq:StabilityScheme} holds for $n=0$. Suppose that it holds for some $n\in \N$. 
    
    (1) Using \eqref{eq:CFL}, we verify that the scheme is monotone. More precisely, 
    for a.e. $a, b, c \in [\eps, 1]$, 
    \[
        \p_1 H_j^n(a, b, c) = \lambda \p_1 F_{j-1/2}^n (a, b) \geq 0, \quad
        \p_3 H_j^n(a, b, c) = -\lambda \p_2 F_{j+1/2}^n (b, c) \geq 0,
    \]

    and 
    \[
        \p_2 H_j^n(a, b, c) 
        = 1 - \lambda \left( \p_1 F_{j+1/2}^n (b, c) - \p_2 F_{j-1/2}^n (a, b)\right) 
        \geq 1 - 2 \lambda L \geq 0.
    \]

    Consequently, on the one hand, using \eqref{eq:vacuum}, 
    \[
        u_j^{n+1} \geq H_j^n(\eps, \eps, \eps) 
        = \eps - \lambda (f_{j+1}^n (\eps) - f_j^n(\eps)) = \eps.
    \]

    On the other hand, $u_j^{n+1} \leq H_j^n(1, 1, 1) = 1$, therefore $u_j^{n+1} \in [\eps, 1]$.
    
    (2) Due to the CFL condition, $w_{j}^{n+1}$ is a convex combination of $w_{j}^n$ and $w_{j-1}^n$, therefore $w_{j}^{n+1}\in [0,1]$, which completes the induction argument.
\end{proof}

\bigskip 

Global $\BV$ estimates can be derived for $(w_\Delta)_\Delta$.

\begin{proposition}
\label{pp:Compactness1}
    Assume that $V_{\min}, V_{\max} \in \Lip([0, 1], \R^+)$ satisfy 
    \eqref{eq:VelocitiesAssumption}-\eqref{eq:vacuum}.
    Fix $\rho_o \in \L{\infty}(\R, [\eps, 1])$ and $w_o \in \BV(\R, [0, 1])$. 
    Assume that \eqref{eq:CFL} holds. Then for all $T, X > 0$, there exist $a, b, c > 0$ such that for all $t \in [0, T]$, the following $\BV$ estimates hold:
    \begin{equation}
    \label{eq:BVBounds1}
        \TV(w_\Delta(t^n)) \leq \TV(w_o)
    \end{equation}

    and
    \begin{equation}
    \label{eq:BVBounds2}
        \int_{\abs{x} \leq X} \abs{w_\Delta(t+\Delta t, x) - w_\Delta(t, x)} \d{x} \leq 
        \frac{L}{\eps} \TV(w_o) \Delta t.
    \end{equation}

    Consequently, there exists $w \in \Czero(\R^+, \Lloc{1}(\R))\cap \Lloc{\infty}(\R^+, \BV(\R))$ such that, up to the extraction of a subsequence, $w_\Delta \to w$ in $\Czero(\R^+,\Lloc{1}(\R))$.
\end{proposition}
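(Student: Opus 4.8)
The plan is to work directly with the convex-combination update \eqref{eq:MFw}, whose only quantitative ingredient is a uniform bound on the numerical characteristic speeds $s_{j-1/2}^n$. Exactly as in the proof of Lemma \ref{lmm:StabilityScheme}: since $f(0, \cdot) = 0$ and $\abs{\p_\rho f} \le L$, each $f_j^n$ takes values in $[0, L]$, hence so do the associated Godunov fluxes and so does the interface flux $F_{j-1/2}^n$; combining this with $u_j^{n+1} \ge \eps$ (Lemma \ref{lmm:StabilityScheme}) and the CFL condition \eqref{eq:CFL} gives, for all $n \in \N$ and $j \in \Z$,
\[
    0 \le s_{j-1/2}^n \le \frac{L}{\eps}, \qquad 0 \le \lambda s_{j-1/2}^n \le 1.
\]

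To obtain \eqref{eq:BVBounds1}, write \eqref{eq:MFw} at indices $j+1$ and $j$ and subtract:
\[
    w_{j+1}^{n+1} - w_j^{n+1} = \bigl(1 - \lambda s_{j+1/2}^n\bigr)\bigl(w_{j+1}^n - w_j^n\bigr) + \lambda s_{j-1/2}^n \bigl(w_j^n - w_{j-1}^n\bigr).
\]
Both weights are nonnegative, so the triangle inequality followed by the shift $j \mapsto j+1$ of the summation index in the second term, which recombines $1 - \lambda s_{j+1/2}^n$ with $\lambda s_{j+1/2}^n$, yields $\sum_j \abs{w_{j+1}^{n+1} - w_j^{n+1}} \le \sum_j \abs{w_{j+1}^n - w_j^n}$, i.e.\ $\TV(w_\Delta(t^{n+1})) \le \TV(w_\Delta(t^n))$; iterating and using $\TV(w_\Delta(t^0)) \le \TV(w_o)$ (averaging never increases the total variation) proves \eqref{eq:BVBounds1}. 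Absolute convergence of these sums and the fact that $w_j^n$ equals its limiting values for $\abs{j}$ large follow from $w_o \in \BV$ and the finite-speed structure of the scheme. For \eqref{eq:BVBounds2}, note that $w_\Delta$ is constant in $t$ on each slab $[t^n, t^{n+1}\mathclose[$, so for $t$ in such a slab $w_\Delta(t+\Delta t, \cdot) - w_\Delta(t, \cdot) = \sum_j (w_j^{n+1} - w_j^n)\1_{I_j}$; from \eqref{eq:MFw}, $\abs{w_j^{n+1} - w_j^n} = \lambda s_{j-1/2}^n \abs{w_j^n - w_{j-1}^n} \le (\lambda L/\eps)\abs{w_j^n - w_{j-1}^n}$, and summing over $j$ with $\lambda \Delta x = \Delta t$ and invoking \eqref{eq:BVBounds1} gives $\int_\R \abs{w_\Delta(t+\Delta t, x) - w_\Delta(t, x)}\d{x} \le \frac{L}{\eps}\TV(w_o)\Delta t$, hence \eqref{eq:BVBounds2}.

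For the compactness conclusion, iterating the one-step time estimate over the slabs between $s$ and $t$ gives $\int_{\abs{x} \le X}\abs{w_\Delta(t, x) - w_\Delta(s, x)}\d{x} \le \frac{L}{\eps}\TV(w_o)(\abs{t-s} + \Delta t)$ for all $s, t \ge 0$ and $X > 0$. Together with the uniform spatial $\BV$ bound \eqref{eq:BVBounds1} and the uniform $\L{\infty}$ bound of Lemma \ref{lmm:StabilityScheme}, this is precisely the hypothesis of the classical compactness criterion (Helly's selection theorem in the space variable at each fixed time, upgraded to uniform-in-time convergence by a diagonal / Arzel\`{a}--Ascoli argument based on the equicontinuity just obtained), which produces a subsequence converging in $\Czero(\R^+, \Lloc{1}(\R))$ to some $w$; by lower semicontinuity of the total variation, $w(t) \in \BV(\R)$ with $\TV(w(t)) \le \TV(w_o)$, so $w \in \Lloc{\infty}(\R^+, \BV(\R))$. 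The only real subtlety is the bookkeeping in the $\BV$ step: because the coefficients $s_{j\pm 1/2}^n$ depend on $j$, \eqref{eq:MFw} is not a fixed-weight convex combination, and one must verify that the reindexing genuinely cancels the mismatch between $s_{j+1/2}^n$ and $s_{j-1/2}^n$ --- which it does, since the surplus terms reassemble exactly into $\sum_j \abs{w_{j+1}^n - w_j^n}$. Everything else is routine.
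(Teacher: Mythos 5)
Your proof is correct and takes essentially the same route as the paper: the same convex-combination identity for $w_{j+1}^{n+1}-w_j^{n+1}$ with the reindexing cancellation under the CFL-induced bound $0\le\lambda s_{j-1/2}^n\le 1$ (via $F\le L$, $u_j^{n+1}\ge\eps$) gives \eqref{eq:BVBounds1}, the one-step estimate from \eqref{eq:MFw} gives \eqref{eq:BVBounds2}, and the concluding compactness is the standard space-$\BV$ plus $\L{1}$ time-equicontinuity criterion, which the paper simply delegates to a citation while you spell it out.
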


\begin{proof}
    Let $n \in \N$ and $j \in \Z$. First, write 
    \[
        \begin{aligned}
            w_{j}^{n+1} - w_{j-1}^{n+1} 
            & = (1 - \lambda s_{j-1/2}^{n}) (w_j^n - w_{j-1}^n) 
            + \lambda s_{j-3/2}^{n} (w_{j-1}^n - w_{j-2}^n).
        \end{aligned}
    \]

    Therefore, using the CFL condition, we derive:
    \[
        \begin{aligned}
            \TV(w_\Delta(t^{n+1})) 
            & \leq \sum_{j \in \Z} \abs{w_{j}^{n} - w_{j-1}^{n}}.
        \end{aligned}
    \]

    Then, using the marching formula \eqref{eq:MFw}, for all $n \in \N$ and $J \in \N \setminus \{0\}$,
    \[
        \begin{aligned}
            \sum_{\abs{j} \leq J} \abs{w_{j}^{n+1} - w_{j}^{n}} \Delta x
            & \leq \frac{L}{\eps}\TV(w_\Delta (t^n)) \Delta t 
            \leq \frac{L}{\eps} \TV(w_o) \Delta t.
        \end{aligned}
    \]

    Combining \eqref{eq:BVBounds1}-\eqref{eq:BVBounds2} with \eqref{eq:StabilityScheme}, \cite[Theorem 5.13]{SerreBookVol1} proves the compactness statement.
\end{proof}

\subsubsection{Compensated compactness}
\label{sssec:CompComp}

We use the compensated compactness method on $(\rho_\Delta)_\Delta$, which  
applications to systems of conservation laws is for instance reviewed in 
\cite{Chen2000, LuBook}. Modified for our use, the compensated compactness lemma reads 
as follows.

\begin{lemma}
    \label{lmm:CompComp}
    Assume that $V_{\min}, V_{\max} \in \Lip([0, 1], \R^+)$ satisfy 
    \eqref{eq:VelocitiesAssumption}-\eqref{eq:StrictConcavity}.
    Fix $w \in \Lloc{\infty}(\R^+, \BV(\R))$ 
    and let $(u_n)_n$ be a bounded sequence of 
    $\L{\infty}(\open{0}{+\infty} \times \R, \R)$ such that for all 
    $k \in \R$ and for any $i \in \{1, 2\}$, the sequence 
    $(\p_t S_i (u_n) + \p_x Q_i (u_n, w))_n$ belongs to a compact subset of 
    $\Hloc{-1}(\open{0}{+\infty} \times \R, \R)$, where 
    \begin{equation}
        \label{eq:CompCompEntropies}
        \begin{array}{cc}
            S_1(u) \coloneq u - k 
            & Q_1(u, \omega) \coloneq f(u, \omega) - f(k, \omega) \\[5pt]
            S_2(u) \coloneq f(u, \omega) - f(k, \omega) 
            & \ds{Q_2(u, \omega) \coloneq \int_k^u \p_\rho f(\xi, \omega)^2 \d{\xi}}.
        \end{array} 
    \end{equation}
    
    Then there exists a subsequence of $(u_n)_n$ that converges in 
    $\Lloc{p}(\open{0}{+\infty} \times \R, \R)$ for all 
    $p \in [1, +\infty \mathclose[$ and a.e. on 
    $\open{0}{+\infty} \times \R$ to some function 
    $u \in \L{\infty}(\open{0}{+\infty} \times \R, \R)$.
\end{lemma}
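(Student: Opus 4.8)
The plan is to run the classical Tartar--DiPerna compensated compactness argument, carried out pointwise in $(t,x)$ with respect to the frozen flux $\rho \mapsto f(\rho, w(t,x))$.

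\smallskip

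\textbf{Step 1 (Young measure).} Since $(u_n)_n$ is bounded in $\L\infty$, the fundamental theorem on Young measures yields a subsequence (not relabeled) and a measurable family of probability measures $(\nu_{t,x})_{(t,x)}$, all supported in a fixed compact interval $K$ (which in the applications lies in $[0,1]$), such that for every $g \in \Czero(K^2)$ one has $g(w(\cdot), u_n) \rightharpoonup \langle \nu_{t,x}, g(w(t,x), \cdot)\rangle$ weakly-$\star$ in $\Lloc\infty(\open{0}{+\infty}\times\R)$. In particular $S_1(u_n) \rightharpoonup \langle \nu_{t,x}, S_1\rangle$, and $Q_i(u_n, w), S_2(u_n,w)$ converge weakly-$\star$ to $\langle \nu_{t,x}, Q_i(\cdot, w(t,x))\rangle$ and $\langle \nu_{t,x}, S_2(\cdot,w(t,x))\rangle$, for each fixed $k$.

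\smallskip

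\textbf{Step 2 (div--curl).} Apply the div--curl lemma to the two $\R^2$-valued sequences $U_n \coloneq (S_1(u_n), Q_1(u_n, w))$ and $V_n \coloneq (Q_2(u_n, w), -S_2(u_n, w))$, bounded in $\Lloc2$. By assumption $\operatorname{div}_{t,x} U_n = \p_t S_1(u_n) + \p_x Q_1(u_n, w)$ and $-\operatorname{curl}_{t,x} V_n = \p_t S_2(u_n,w) + \p_x Q_2(u_n,w)$ lie in a compact subset of $\Hloc{-1}$, so $U_n \cdot V_n \rightharpoonup \bar U \cdot \bar V$ in $\mathcal D'$, where $\bar U,\bar V$ are the weak-$\star$ limits. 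Combined with Step 1 this gives, for a.e. $(t,x)$ and every $k$, the commutation identity
\[
  \langle \nu_{t,x},\, S_1 Q_2 - Q_1 S_2\rangle = \langle \nu_{t,x}, S_1\rangle\, \langle \nu_{t,x}, Q_2\rangle - \langle \nu_{t,x}, Q_1\rangle\, \langle\nu_{t,x}, S_2\rangle,
\]
all $w$-arguments being evaluated at $\omega \coloneq w(t,x)$.

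\smallskip

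\textbf{Step 3 (reduction to a Dirac mass).} Note from \eqref{eq:CompCompEntropies} that $Q_1(\cdot,\omega) = S_2(\cdot,\omega)$. Symmetrizing the identity of Step 2 with two independent copies $a, b \sim \nu_{t,x}$ turns it into
\[
  \Bigl\langle \nu_{t,x}\otimes\nu_{t,x},\ (a - b)\!\int_b^a \p_\rho f(\xi,\omega)^2\,\d\xi - \bigl(f(a,\omega) - f(b,\omega)\bigr)^2 \Bigr\rangle = 0 ,
\]
and the free parameter $k$ disappears. By Cauchy--Schwarz the integrand is nonnegative, vanishing only when $\p_\rho f(\cdot,\omega)$ is constant between $a$ and $b$, which by the strict concavity \eqref{eq:StrictConcavity} forces $a = b$. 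Hence $\nu_{t,x}\otimes\nu_{t,x}$ is carried by the diagonal, i.e. $\nu_{t,x} = \delta_{u(t,x)}$ for a.e. $(t,x)$, with $u(t,x) \coloneq \langle \nu_{t,x}, \mathrm{id}\rangle \in \L\infty$.

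\smallskip

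\textbf{Step 4 (conclusion).} Reduction of the Young measure to a Dirac mass is equivalent to $u_n \to u$ in $\Lloc1$; interpolating against the uniform $\L\infty$ bound upgrades this to $\Lloc p$ convergence for all $p \in [1, +\infty\mathclose[$, and a further extraction gives a.e. convergence.

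\smallskip

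The argument is essentially classical; the only point needing care is the genuinely $(t,x)$-dependent flux. This causes no real obstacle: $w$ is independent of $n$, so it enters the div--curl conclusion and the Young-measure representation merely as a parameter, and freezing it at a Lebesgue point reduces everything to the scalar commutation identity for the strictly concave flux $f(\cdot,\omega)$. (Only $w \in \L\infty$ is used here; the $\BV$ regularity is needed elsewhere.)
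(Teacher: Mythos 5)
Your argument is correct, but it is worth noting that the paper does not actually prove Lemma \ref{lmm:CompComp}: it is stated as a known compensated-compactness result, adapted from the scalar theory, with the justification deferred to the references \cite{Chen2000, LuBook}; the paper's own work goes into verifying its hypotheses (Lemmas \ref{lmm:CompCompA}--\ref{lmm:CompCompC}). What you supply is the classical Tartar--DiPerna proof behind those citations, with the $(t,x)$-dependence through $w$ correctly handled as a frozen parameter (only measurability of $w$ is needed, as you observe, since $w$ is fixed along the sequence and the Young measure of the pair $(w,u_n)$ is $\delta_{w(t,x)}\otimes\nu_{t,x}$). The one step you assert rather than verify is that the symmetrization kills the right-hand side of the commutation relation: this is true, but deserves a line. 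Writing $H(a,b)\coloneq\int_b^a \p_\rho f(\xi,\omega)^2\d{\xi}$ and integrating Tartar's identity in $k$ against $\nu_{t,x}(\d k)$, the left side gives $\ds{\iint\bigl[(a-b)H(a,b)-(f(a,\omega)-f(b,\omega))^2\bigr]\d\nu\otimes\d\nu = A-2V}$ with $V$ the $\nu$-variance of $f(\cdot,\omega)$, while on the right side the antisymmetry of $H$ under $a\leftrightarrow b$ gives $\iint(\langle\nu,\mathrm{id}\rangle-b)H(a,b)\,\d\nu\otimes\d\nu=\tfrac12 A$, so the identity reads $A-2V=\tfrac12A-V$, i.e. $A-2V=0$, which is exactly your symmetrized relation; Cauchy--Schwarz plus strict concavity \eqref{eq:StrictConcavity} then forces $\nu_{t,x}\otimes\nu_{t,x}$ onto the diagonal as you say (one also needs the identity for $\nu_{t,x}$-a.e.\ $k$ on a common null set in $(t,x)$, which follows from continuity in $k$ and a countable dense choice of $k$'s). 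A slightly more standard shortcut, used in the references the paper cites, is to take the single value $k=\langle\nu_{t,x},\mathrm{id}\rangle$, which makes the right-hand side $\leq 0$ and the left-hand side $\geq 0$ directly, avoiding the double integral; your symmetrized variant buys nothing extra here but is perfectly valid.
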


To prove the $\Hloc{-1}$ compactness required in Lemma \ref{lmm:CompComp} we will use the following technical result, see \cite{Murat1981, DCP1987, SerreBook}.

\begin{lemma}
    \label{lmm:CompCompTechnical}
    Let $q, r \in \R$ such that $1 < q < 2 < r$. Let $(\mu_n)_n$ be a sequence of distributions such that:
    
    (i) $(\mu_n)_n$ belongs to a compact subset of $\Wloc{-1}{q}(\open{0}{+\infty} \times \R, \R)$.

    (ii) $(\mu_n)_n$ is bounded in $\Wloc{-1}{r}(\open{0}{+\infty} \times \R, \R)$.

    Then $(\mu_n)_n$ belongs to a compact subset of $\Hloc{-1}(\open{0}{+\infty} \times \R, \R)$.
\end{lemma}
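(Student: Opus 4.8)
The plan is to reduce this local statement to a global one on $\R^2$ by multiplying by a cutoff, and then to transfer the problem from the negative Sobolev scale to the Lebesgue scale by means of a Bessel potential, after which the conclusion follows from the log-convexity of $\L{p}$-norms. So I would first fix a bounded open set $\Omega \Subset \open{0}{+\infty}\times\R$ and a cutoff $\chi \in \Cc{\infty}(\open{0}{+\infty}\times\R,\R)$ equal to $1$ on a neighbourhood of $\overline{\Omega}$. Multiplication by $\chi$ maps $\Wloc{-1}{p}$ continuously into $\W{-1}{p}(\R^2)$ for every $p\in\open{1}{+\infty}$, since $\norm{\chi\mu}_{\W{-1}{p}(\R^2)} = \sup\{\langle\mu,\chi\phi\rangle : \norm{\phi}_{\W{1}{p'}(\R^2)}\le 1\} \lesssim \norm{\mu}_{\W{-1}{p}(\Omega')}$ for a fixed $\Omega'\Supset\mathrm{supp}\,\chi$. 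Hence $(\chi\mu_n)_n$ is precompact in $\W{-1}{q}(\R^2)$ by (i) and bounded in $\W{-1}{r}(\R^2)$ by (ii); as $\Omega$ is arbitrary, it then suffices to prove that $(\chi\mu_n)_n$ is precompact in $\H{-1}(\R^2)$.

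To this end, let $\mathcal{J}\coloneq(1-\Delta)^{-1/2}$ be the order-$1$ Bessel potential on $\R^2$, viewed as a Fourier multiplier. By the Mikhlin--Hörmander multiplier theorem, $\mathcal{J}$ is an isomorphism of $\W{-1}{p}(\R^2)$ onto $\L{p}(\R^2)$ for all $p\in\open{1}{+\infty}$ (the Sobolev space $\W{-1}{p}$ being identified with the Bessel-potential space, which is legitimate for integer smoothness and $1<p<\infty$), while $\mathcal{J}^{-1}=(1-\Delta)^{1/2}$ maps $\L{2}(\R^2)$ continuously into $\H{-1}(\R^2)$. Setting $f_n\coloneq\mathcal{J}(\chi\mu_n)$, the previous step gives that $(f_n)_n$ is precompact in $\L{q}(\R^2)$ and bounded in $\L{r}(\R^2)$.

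Now, since $1<q<2<r$, choose $\theta\in\open{0}{1}$ with $\tfrac12=\tfrac\theta q+\tfrac{1-\theta}r$; log-convexity of Lebesgue norms yields $\norm{g}_{\L{2}}\le\norm{g}_{\L{q}}^{\theta}\norm{g}_{\L{r}}^{1-\theta}$ for $g\in\L{q}(\R^2)\cap\L{r}(\R^2)$. Applied to the differences $f_n-f_m$, this shows that any subsequence of $(f_n)_n$ has, by $\L{q}$-precompactness, a further subsequence that is Cauchy in $\L{q}(\R^2)$, hence — being also bounded in $\L{r}(\R^2)$ — Cauchy in $\L{2}(\R^2)$. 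Thus $(f_n)_n$ is precompact in $\L{2}(\R^2)$, and applying the continuous operator $\mathcal{J}^{-1}$ shows $(\chi\mu_n)_n=(\mathcal{J}^{-1}f_n)_n$ is precompact in $\H{-1}(\R^2)$. Restricting to $\Omega$ and letting $\Omega$ exhaust $\open{0}{+\infty}\times\R$ concludes.

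The genuinely routine heart of the argument is the interpolation step, which is essentially one line. The two points requiring care — and the second is what I expect to be the main obstacle to write cleanly — are, first, the localization bookkeeping (checking that cutting off by $\chi$ is continuous on the negative Sobolev scale and preserves both hypotheses), and second, the identification of $\W{-1}{p}$ with Bessel-potential spaces together with the mapping properties of $(1-\Delta)^{\pm1/2}$, which rest on Calderón--Zygmund multiplier theory; these are classical and are precisely the facts for which the cited references \cite{Murat1981, DCP1987, SerreBook} are invoked.
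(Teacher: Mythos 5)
Your argument is correct: localize with a cutoff, transfer to the Lebesgue scale via the Bessel potential $(1-\Delta)^{-1/2}$ (legitimate for $1<p<\infty$ by Mikhlin--H\"ormander), interpolate $\L{q}$-compactness against $\L{r}$-boundedness to get $\L{2}$-compactness, and return to $\H{-1}$. The paper does not prove this lemma but defers to Murat, Ding--Chen--Luo and Serre, and what you have written is precisely the classical proof found in those references, so no comparison beyond that is needed.
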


For the compactness analysis, we take inspiration from 
\cite{KMRTriangular, KMRWellBalanced}.

Let $(S, Q)$ be a smooth entropy/entropy flux pair, which we recall, means that 
\[
    \forall \omega, u \in [0, 1], \quad \p_u Q(u, \omega) = S'(u) \; \p_\rho f(u, \omega).
\]

The entropy dissipation of $(\rho_\Delta, w_\Delta)$ is defined as 
\begin{equation}
    \label{eq:EntropyDissipation}
    E_\Delta(\phi) \coloneq \int_{0}^{+\infty} \int_{\R} 
    S(\rho_\Delta) \p_t \phi + Q(\rho_\Delta, w_\Delta)  \p_x \phi \; \d{x} \d{t}, 
    \quad \phi \in \Cc{\infty}(\R^+ \times \R, \R).
\end{equation}

Fix $n \in \N$ and $j \in \Z$. Let us consider the entropy dissipation in 
$P_{j+1/2}^n \coloneq [t^n, t^{n+1} \mathclose[ \times \open{x_j}{x_{j+1}}$. By integration by parts and the fact that $\rho_\Delta$ is the exact solution of a 
Riemann problem in $P_{j+1/2}^n$, we write:
\[
    \begin{aligned}
        \iint_{P_{j+1/2}^n} 
        & S(\rho_\Delta) \p_t \phi + Q(\rho_\Delta, w_\Delta) \p_x \phi \;\d{x} \d{t} \\
        & = \int_{x_j}^{x_{j+1}} S(\rho_\Delta(t^{n+1}-, x)) \phi(t^{n+1}, x) 
        -  S(\rho_\Delta(t^{n}, x)) \phi(t^{n}, x) \; \d{x} \\
        & + \int_{t^{n}}^{t^{n+1}} Q(u_{j+1}^n, w_{j+1}^n) \phi(t, x_{j+1}) 
        - Q(u_{j}^n, w_{j}^n) \phi(t, x_{j}) \; \d{t} \\
        & + \int_{t^{n}}^{t^{n+1}} [\![ \sigma S(\rho_\Delta) - 
        Q(\rho_\Delta, w_\Delta) ]\!]_y \; \phi(t, y(t)) \d{t} \\
        & + \int_{t^{n}}^{t^{n+1}} \biggl( Q(\rho_\Delta(t, x_{j+1/2}-), w_j^n) 
        - Q(\rho_\Delta(t, x_{j+1/2}+), w_{j+1}^n) \biggr) \phi(t, x_{j+1/2}) \d{t}.
    \end{aligned}
\]

In the right-hand side of the previous equality: 

$\bullet$ The third integral is the contribution of the (eventual) classical shock in 
the solution. Hence, $\sigma$ is the shock velocity, given by the Rankine-Hugoniot 
condition, $y$ is the shock curve and 
\[
    [\![ \sigma S(\rho_\Delta) - Q(\rho_\Delta, w_\Delta) ]\!]_y
    \coloneq (\sigma S(\rho_\Delta) - Q(\rho_\Delta, w_\Delta))(t, y(t)+) 
    - (\sigma S(\rho_\Delta) - Q(\rho_\Delta, w_\Delta))(t, y(t)-).
\]

It is relevant to say that $w_\Delta$ is continuous along $\{x=y(t)\}$, equal to 
either $w_j^n$ or $w_{j+1}^n$, depending on the sign of $\sigma$.

$\bullet$ The last integral is the contribution of the (eventual) non-classical shock 
in the solution.

We refer to Section \ref{ssec:DiscontinuousFlux} where the structure of solutions to Riemann Problems is discussed.

\bigskip

Taking the sum for $n \in \N$ and $j \in \Z$, we see that the entropy dissipation rewrites as
\begin{equation}
    \label{eq:EntropyDissipation2}
    \begin{array}{clr}
    E_\Delta(\phi) 
    & \ds{= -\int_{\R} S(\rho_\Delta(0, x)) \phi(0, x) \d{x}} 
    & \longleftarrow I_1(\phi) \\[5pt]
    & \ds{+ \sum_{n=1}^{+\infty} \int_{\R} 
    (S(\rho_\Delta(t^{n}-, x)) - S(\rho_\Delta(t^{n}, x))) \phi(t^n, x) \; \d{x}} 
    & \longleftarrow I_2(\phi) \\[5pt]
    & \ds{+ \sum_{n=0}^{+\infty} \sum_{j \in \Z} \sum_{y} \int_{t^{n}}^{t^{n+1}} 
    [\![ \sigma S(\rho_\Delta) - Q(\rho_\Delta, w_\Delta) ]\!]_y \; 
    \phi(t, y(t)) \d{t}} 
    & \longleftarrow I_3(\phi) \\[5pt]
    & \ds{+ \sum_{n=0}^{+\infty} \sum_{j \in \Z} \int_{t^{n}}^{t^{n+1}} 
    \biggl( Q(\rho_\Delta(t, x_{j+1/2}-), w_j^n) 
    - Q(\rho_\Delta(t, x_{j+1/2}+), w_{j+1}^n) \biggr) \phi(t, x_{j+1/2}) \; \d{t}} 
    & \longleftarrow I_4(\phi) \\[5pt]
\end{array}
\end{equation}

As we previously explained, for all $j \in \Z$, the sum $\sum_{y}$ in 
\eqref{eq:EntropyDissipation2} is either over an empty set, or over a singleton.
\bigskip

First, let us give a bound on the variation of the approximate solution across the 
discrete time levels.

\begin{lemma}
    \label{lmm:CompCompA}
    Let $T > 0$ and $X > 0$. Fix $N, J \in \N$ such that 
    $T \in [t^N, t^{N+1} \mathclose[$ and $X \in I_{J}$. Then, there exists a constant $c_1 > 0$ depending on $T$, $X$, $\rho_o$, $w_o$ and $f$ such that 
    \begin{equation}
        \label{eq:CompCompA}
        \begin{aligned}
            & \sum_{n=1}^{N} \sum_{\abs{j} \leq J} 
        \int_{I_j} \abs{\rho_\Delta (t^n, x) - \rho_\Delta(t^n-, x)}^2 \d{x} \leq c_1 \\
        \text{and} \quad & 
        \sum_{n=0}^{N} \sum_{\abs{j} \leq J} \sum_{y} \int_{t^{n}}^{t^{n+1}} 
        [\![ \sigma S(\rho_\Delta) - Q(\rho_\Delta, w_\Delta) ]\!]_y \; \d{t} \leq c_1.
        \end{aligned}
    \end{equation}
\end{lemma}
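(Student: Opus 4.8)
The plan is to run a compensated--compactness entropy--dissipation estimate with one fixed uniformly convex smooth entropy. Take $S(u)\coloneq u^2$ and let $Q(u,\omega)\coloneq 2\int_0^u \xi\,\p_\rho f(\xi,\omega)\,\d{\xi}$ be the associated flux, so that $\p_u Q=S'(u)\,\p_\rho f(u,\omega)$, $S''\equiv 2$, and $(S,Q)$ is smooth on $[0,1]^2$. The starting point is the \emph{exact} identity \eqref{eq:EntropyDissipation2}, $E_\Delta(\phi)=I_1(\phi)+I_2(\phi)+I_3(\phi)+I_4(\phi)$, valid for every $\phi\in\Cc{\infty}(\R^+\times\R,\R)$. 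Fix a cutoff $\phi=\phi_{T,X}\in\Cc{\infty}(\R^+\times\R,[0,1])$ with $\phi\equiv 1$ on $[0,T+1]\times[-X-2,X+2]$, $\mathrm{supp}\,\phi\subset[0,T+2]\times[-X-3,X+3]$ and $\norm{\nabla\phi}_{\L{\infty}}\le 2$. Since $\Delta x,\Delta t\le 1$ and \eqref{eq:CFL} holds, every rectangle $P_{j+1/2}^n$ with $n\le N$, $\abs{j}\le J$, together with the classical--shock curves and the interfaces $x=x_{j+1/2}$ it carries, lies in $\{\phi\equiv 1\}$; using $\eps\le\rho_\Delta\le 1$, $0\le w_\Delta\le 1$ and $\abs{Q}\le 2L$ one gets $\abs{E_\Delta(\phi)}\le C_0(T,X,f)$, and $-C_1(X,f)\le I_1(\phi)=-\int_\R S(\rho_\Delta(0,x))\phi(0,x)\,\d{x}\le 0$.

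Next come the two sign--definite terms. For $I_2$: by \eqref{eq:MF1}, $\rho_\Delta(t^n,x)=u_j^n$ is the mean of $\rho_\Delta(t^n-,\cdot)$ on $I_j$, so Taylor's formula with $S''\equiv 2$ (the linear term integrating to zero) gives $\int_{I_j}\bigl(S(\rho_\Delta(t^n-,x))-S(u_j^n)\bigr)\,\d{x}=\int_{I_j}\abs{\rho_\Delta(t^n,x)-\rho_\Delta(t^n-,x)}^2\,\d{x}\ge 0$; since $\phi\ge 0$ and $\phi\equiv 1$ on the relevant rectangles, $I_2(\phi)\ge\sum_{n=1}^{N}\sum_{\abs{j}\le J}\int_{I_j}\abs{\rho_\Delta(t^n,x)-\rho_\Delta(t^n-,x)}^2\,\d{x}$. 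For $I_3$: each curve $y$ carries a classical Lax--admissible shock of a genuinely nonlinear flux $f(\cdot,w_j^n)$ or $f(\cdot,w_{j+1}^n)$, across which $w_\Delta$ is continuous, so convexity of $S$ yields $[\![\sigma S(\rho_\Delta)-Q(\rho_\Delta,w_\Delta)]\!]_y\ge 0$, whence $I_3(\phi)\ge\sum_{n=0}^{N}\sum_{\abs{j}\le J}\sum_{y}\int_{t^n}^{t^{n+1}}[\![\sigma S(\rho_\Delta)-Q(\rho_\Delta,w_\Delta)]\!]_y\,\d{t}\ge 0$. Thus $I_2(\phi)$ and $I_3(\phi)$ are nonnegative and dominate, respectively, the first and second sum in \eqref{eq:CompCompA}.

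The crux is the interface term $I_4$. Fix $n,j$, $t\in(t^n,t^{n+1})$, and write $u_L\coloneq\rho_\Delta(t,x_{j+1/2}-)$, $u_R\coloneq\rho_\Delta(t,x_{j+1/2}+)$. Since $\rho_\Delta=\cU^n$ is the exact entropy solution of \eqref{eq:schemeStep1}, a conservation law whose flux jumps from $f_j^n$ to $f_{j+1}^n$ across $x_{j+1/2}$, Theorem \ref{th:EquivalentDefinitions} $\mathbf{(B)}$(ii) (applied near that interface, with critical points $\alpha_j^n,\alpha_{j+1}^n$ from Lemma \ref{lmm:critical_point}) gives $(u_L,u_R)\in\cG$, hence $f_j^n(u_L)=f_{j+1}^n(u_R)$ by Proposition \ref{pp:FluxAJG}(i). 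Decomposing $S(u)=(u-\tfrac12)+\int_0^1\abs{u-k}\,\d{k}$, so that $Q(u,\omega)=f(u,\omega)+\int_0^1\Phi(u,\omega,k)\,\d{k}+\mathrm{const}$ with $\Phi$ as in \eqref{eq:EntropyFlux}, the Rankine--Hugoniot equality kills the $f$--part and leaves
\[
    Q(u_R,w_{j+1}^n)-Q(u_L,w_j^n)=\int_0^1\bigl(\Phi(u_R,w_{j+1}^n,k)-\Phi(u_L,w_j^n,k)\bigr)\,\d{k}.
\]
By the symmetry $\Phi(a,\omega,b)=\Phi(b,\omega,a)$, applying Proposition \ref{pp:FluxAJG}(ii) with the constant pair $(k,k)$ in the first slot and $(u_L,u_R)\in\cG$ in the germ slot, each integrand is $\le\cR(k,k)=\abs{f(k,w_j^n)-f(k,w_{j+1}^n)}\le\norm{\p_\omega f}_{\L{\infty}}\abs{w_{j+1}^n-w_j^n}$ by Proposition \ref{pp:FluxAJG}(iii). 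Hence, choosing $N'$ with $\mathrm{supp}\,\phi\subset[0,t^{N'+1})$ and invoking the BV bound \eqref{eq:BVBounds1} of Proposition \ref{pp:Compactness1},
\[
    -I_4(\phi)=\sum_{n=0}^{N'}\sum_{j\in\Z}\int_{t^n}^{t^{n+1}}\bigl(Q(u_R,w_{j+1}^n)-Q(u_L,w_j^n)\bigr)\phi(t,x_{j+1/2})\,\d{t}\le\norm{\p_\omega f}_{\L{\infty}}(T+3)\TV(w_o)=:C_4.
\]
Plugging everything into the exact identity gives $I_2(\phi)+I_3(\phi)=E_\Delta(\phi)-I_1(\phi)-I_4(\phi)\le C_0+C_1+C_4$, and since $I_2(\phi),I_3(\phi)\ge 0$ dominate the two sums in \eqref{eq:CompCompA}, both are bounded by $c_1\coloneq C_0+C_1+C_4$, a constant depending only on $T,X,\rho_o,w_o,f$.

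The one delicate point is the sign and size of $I_4$. What makes it work is that \emph{only a one--sided bound is needed}: we must control $-I_4(\phi)$ from above, i.e.\ bound the entropy production at each stationary non--classical interface shock \emph{measured against a constant state $k$} by $\abs{f_j^n(k)-f_{j+1}^n(k)}$, which is exactly Proposition \ref{pp:FluxAJG}(ii)--(iii) read through the symmetry of the Kruzhkov flux. The reverse inequality genuinely fails for germ elements of $\cG_3$ type (undercompressive shocks with an $O(1)$ jump), but it is not required. Everything else is routine: finite speed of propagation makes the cutoff and its support mesh--independent, and summability of $\sum_{n,j}\Delta t\,\abs{w_{j+1}^n-w_j^n}$ rests solely on the uniform total--variation bound $\TV(w_\Delta(t^n))\le\TV(w_o)$.
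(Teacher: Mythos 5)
Your proposal follows essentially the same strategy as the paper: test the exact dissipation identity \eqref{eq:EntropyDissipation2} with the quadratic entropy, use the averaging property \eqref{eq:MF1} to turn $I_2$ into the sum of squared time-jumps, use Lax admissibility of the classical shocks for the sign of $I_3$, and control $I_4$ through Proposition \ref{pp:FluxAJG} (ii)--(iii) together with the bound $\TV(w_\Delta(t^n))\le\TV(w_o)$ from Proposition \ref{pp:Compactness1}. Two of your variations are, if anything, cleaner than the paper's write-up: you make explicit that the interface traces of $\cU^n$ lie in the germ (which the paper uses implicitly when invoking Proposition \ref{pp:FluxAJG} (ii)), and instead of the cited ``approximation argument from Kruzhkov entropies to general entropies'' you use the exact representation $u^2=(u-\tfrac12)+\int_0^1\abs{u-k}\d{k}$; just note that the corresponding additive term in $Q$ is a function of $\omega$, not a true constant, so its jump across $x_{j+1/2}$ must also be absorbed into the $\abs{w_{j+1}^n-w_j^n}$ budget (it is, with the same Lipschitz constant).

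The one step that needs repair is your lower bound $I_2(\phi)\ge\sum_{n\le N}\sum_{\abs{j}\le J}\int_{I_j}\abs{\rho_\Delta(t^n,x)-\rho_\Delta(t^n-,x)}^2\d{x}$. The cellwise identity ``quadratic term only'' uses that $\phi(t^n,\cdot)$ is constant on $I_j$, so that the linear term $S'(u_j^n)\int_{I_j}(\rho_\Delta(t^n-,x)-u_j^n)\phi(t^n,x)\d{x}$ vanishes; on the cells where your smooth cutoff transitions from $1$ to $0$ this term does not vanish and the cell contribution can be negative, so ``$\phi\ge0$ and $\phi\equiv1$ on the relevant rectangles'' does not by itself give the claimed inequality. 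This is fixable rather than fatal: either pass to (approximations of) the sharp indicator $\1_{[0,T]\times[-X,X]}$ as the paper does, so that all interior cells carry weight exactly $1$, or keep the smooth cutoff and bound the leakage, e.g.\ $\abs{\int_{I_j}(\rho_\Delta(t^n-,x)-u_j^n)(\phi(t^n,x)-\phi(t^n,x_j))\d{x}}\le\norm{\p_x\phi}_{\L{\infty}}\Delta x^2$, whose sum over the $O\bigl((\Delta t\,\Delta x)^{-1}\bigr)$ cells in the support is $O(\Delta x/\Delta t)$, admissible for a fixed mesh ratio $\lambda$ (a dependence the paper itself accepts elsewhere). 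With that adjustment your constant $c_1$ and the rest of the argument go through as in the paper.
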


\begin{proof}
    Choose $S(u) \coloneq \frac{u^2}{2}$ and $Q$ its entropy flux. Let $(\phi_k)_k$ be a sequence of nonnegative test functions that converges to $\phi \coloneq \1_{[0, T] \times [-X, X]}$. At the limit $k \to +\infty$ in \eqref{eq:EntropyDissipation2}, we have
    \[
        I_2(\phi) + I_3(\phi) = \int_{\abs{x} \leq X} \abs{\rho_\Delta(0, x)}^2 \d{x} - I_4(\phi).
    \]  

    Clearly, 
    \[
        \int_{\abs{x} \leq X} \abs{\rho_\Delta(0, x)}^2 \d{x} 
        \leq 2 X \norm{\rho_o}_{\L{\infty}(\R)}^2,
    \]

    and $I_3(\phi)$ is nonnegative because the (eventual) shock is classical and 
    therefore produces entropy. 

    Then, notice that by definition of $(u_j^n)_{j, n}$,
    \[
        \begin{aligned}
            I_2(\phi) 
            & = \frac{1}{2} \sum_{n=1}^{N} \sum_{\abs{j} \leq J} \int_{I_j} \left(
            \rho_\Delta(t^{n}-, x)^2 - \rho_\Delta(t^{n}, x)^2 \right) \; \d{x} \\ 
            & = \frac{1}{2} \sum_{n=1}^{N} \sum_{\abs{j} \leq J} \int_{I_j} \biggl\{ 
            (\rho_\Delta(t^{n}-, x) - \rho_\Delta(t^{n}, x))^2
            +2 \rho_\Delta(t^{n}, x) (\rho_\Delta(t^{n}-, x)  - \rho_\Delta(t^{n}, x))
            \biggr\} \d{x} \\
            & = \frac{1}{2} \sum_{n=1}^{N} \sum_{\abs{j} \leq J} \int_{I_j} 
            (\rho_\Delta(t^{n}-, x) - \rho_\Delta(t^{n}, x))^2 \d{x}
            + \sum_{n=1}^{N} \sum_{\abs{j} \leq J} u_j^n \underbrace{\int_{I_j}
            (\rho_\Delta(t^{n}-, x)  - u_j^n) \d{x}}_{=0}.
        \end{aligned}
    \]

    Finally, as a consequence of Proposition \ref{pp:FluxAJG} \textit{(ii)}, for all 
    $k \in [0, 1]$,
    \[
        \Phi(\rho_\Delta(t, x_{j+1/2}-), w_j^n, k) - \Phi(\rho_\Delta(t, x_{j+1/2}+), w_{j+1}^n, k)
        \geq - \cR_{j+1/2}^n (k, k).
    \]

    From Proposition \ref{pp:FluxAJG} \textit{(iii)}, we bound this remainder term as following:
    \[
        \cR_{j+1/2}^n (k, k) 
        = \abs{f_{j+1}^n (k) - f_j^n(k)} 
        \leq \sup_{p \in [\eps, 1]} \abs{f_{\min}(p) - f_{\max}(p)} \cdot \abs{w_{j+1}^n - w_j^n}.
    \]
    
    Using an approximation argument to pass from Kruzhkov entropies to any entropy, 
    see \cite{KMRTriangular}, we obtain
    \[
        I_4(\phi) 
        \geq - T \sup_{p \in [\eps, 1]} \abs{f_{\min}(p) - f_{\max}(p)} \; \TV(w_\Delta(t^n)).
    \]

    Estimate \eqref{eq:CompCompA} follows by putting the bounds on $I_1(\phi)$ and 
    $I_4(\phi)$ together and using Proposition \ref{pp:Compactness1}.
\end{proof}

We can convert \eqref{eq:CompCompA} into an estimate on the spatial variation of the 
approximate solutions, following the arguments of \cite[Page 67]{DiPerna1983}. 
For the sake of clarity, we set $\rho_{j+1/2}^{n, \pm} \coloneq \rho_\Delta(t^n, x_{j+1/2} \pm)$.

\begin{lemma}
    \label{lmm:CompCompB}
    Let $T > 0$ and $X > 0$. Fix $N, J \in \N$ such that 
    $T \in [t^N, t^{N+1} \mathclose[$ and $X \in I_{J}$. Then, there exists a constant $c_2> 0$ depending on $T$, $X$, $\rho_o$, $w_o$ and $f$ such that 
    \begin{equation}
        \label{eq:CompCompB}
        \sum_{n=0}^{N} \sum_{\abs{j} \leq J} \left\{ 
            (\rho_{j+1/2}^{n, -} - \rho_{j+1/2}^{n, +})^2 
            + (u_{j}^{n} - \rho_{j+1/2}^{n, -})^2 
            + (u_{j+1}^{n} - \rho_{j+1/2}^{n, +})^2 \right\} \Delta x \leq c_2. 
    \end{equation}
\end{lemma}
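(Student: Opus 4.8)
The plan is to follow \cite[p.~67]{DiPerna1983} and turn the two bounds of \eqref{eq:CompCompA} — one on the $\L{2}$-size of the jumps of $\rho_\Delta$ across the discrete time levels, one on the entropy dissipated by the (at most one) classical shock present in each cell — into the spatial oscillation estimate \eqref{eq:CompCompB}. Throughout I work with the quadratic entropy $S(u) = u^2/2$ and its entropy flux $Q = Q(u, \omega)$ (so that $\p_u Q(u,\omega) = u\,\p_\rho f(u,\omega)$), and I use that by \eqref{eq:StrictConcavity} and compactness of $[0,1]^2$ there is $c_0 > 0$ with $-\p_\rho^2 f \geq c_0$. Here $\rho_{j+1/2}^{n,\pm}$ denotes the one-sided trace of the Riemann solution $\cU^n$ along $\{x = x_{j+1/2}\}$.

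First I recall, from Section \ref{ssec:DiscontinuousFlux} and \cite[Section~3.1]{Sylla2024}, the structure of $\rho_\Delta$ on $P_{j+1/2}^n$: it is the exact entropy solution of the Riemann problem for the discontinuous flux $(f_j^n, f_{j+1}^n)$ with data $(u_j^n, u_{j+1}^n)$, namely the gluing of a single classical wave of $f_j^n$ of non-positive speed joining $u_j^n$ to $\rho_{j+1/2}^{n,-}$, a stationary non-classical shock located at $x = x_{j+1/2}$ joining $\rho_{j+1/2}^{n,-}$ to $\rho_{j+1/2}^{n,+}$, and a single classical wave of $f_{j+1}^n$ of non-negative speed joining $\rho_{j+1/2}^{n,+}$ to $u_{j+1}^n$; by \eqref{eq:CFL} the two classical waves remain, at time $t^{n+1}-$, inside $\open{x_j}{x_{j+1/2}}$, resp. $\open{x_{j+1/2}}{x_{j+1}}$, and do not reach the lateral edges $x_j$, $x_{j+1}$. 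Then, proceeding as in the derivation of \eqref{eq:EntropyDissipation2} but restricted to $P_{j+1/2}^n$ and specialized to $S(u) = u^2/2$, I integrate by parts in $\iint_{P_{j+1/2}^n}(S(\rho_\Delta)\p_t\varphi + Q(\rho_\Delta, w_\Delta)\p_x\varphi)$ and let $\varphi \to \1_{P_{j+1/2}^n}$: the only contributions are the non-positive classical-shock dissipation and a surplus supported on $\{x = x_{j+1/2}\}$ equal to $Q(\rho_{j+1/2}^{n,-}, w_j^n) - Q(\rho_{j+1/2}^{n,+}, w_{j+1}^n)$, which is exactly the $(n,j)$-term of $I_4$ in \eqref{eq:EntropyDissipation2}. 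Using conservation of mass to compute the average $m_{j+1/2}^{n+1} \coloneq \frac{1}{\Delta x}\int_{x_j}^{x_{j+1}}\rho_\Delta(t^{n+1}-, y)\,\d{y}$, this rewrites the variance $V_{j+1/2}^n \coloneq \int_{x_j}^{x_{j+1}}|\rho_\Delta(t^{n+1}-, x) - m_{j+1/2}^{n+1}|^2\,\d{x}$ as an explicit expression involving $u_j^n$, $u_{j+1}^n$, $Q(u_j^n, w_j^n)$, $Q(u_{j+1}^n, w_{j+1}^n)$, the classical-shock dissipation and the surplus above.

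The heart of the proof is the reverse inequality: using the Riemann structure and $-\p_\rho^2 f \geq c_0$ one shows that
\[
    V_{j+1/2}^n \;\geq\; c\,\Delta x\Bigl[(u_j^n - \rho_{j+1/2}^{n,-})^2 + (\rho_{j+1/2}^{n,-} - \rho_{j+1/2}^{n,+})^2 + (\rho_{j+1/2}^{n,+} - u_{j+1}^n)^2\Bigr] - C\,\abs{w_{j+1}^n - w_j^n}^2\,\Delta x
\]
for constants $c, C > 0$ depending only on $f$. Indeed, a classical shock is, after the averaging step \eqref{eq:MF1}, a jump sitting at an interior point of $\open{x_j}{x_{j+1}}$ whose position factor $\theta(1-\theta)$ is kept away from $0$ by \eqref{eq:CFL}, so it contributes $\gtrsim \Delta x$ times its strength squared; a rarefaction of strength $\delta$ has, by the genuine nonlinearity, spread to width $\geq c_0\,\Delta t\,\delta$ with full variation $\delta$; and the stationary jump $\rho_{j+1/2}^{n,-} \to \rho_{j+1/2}^{n,+}$ sits at the centre of $\open{x_j}{x_{j+1}}$, hence is also seen by $V_{j+1/2}^n$. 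The loss $C\abs{w_{j+1}^n - w_j^n}^2\Delta x$ absorbs the flux mismatch $f_j^n - f_{j+1}^n = (w_{j+1}^n - w_j^n)(f_{\max} - f_{\min})$ — this is where the remainder identity $\cR(k,k) = \abs{f_j^n(k) - f_{j+1}^n(k)}$ of Proposition \ref{pp:FluxAJG}(iii) is used. It then remains to sum over $0 \le n \le N$ and $\abs{j} \le J$ and invoke \eqref{eq:CompCompA}: the time-difference part of $\sum V_{j+1/2}^n$ is controlled by its first estimate and the classical-shock part by its second; the surplus terms are bounded using \eqref{eq:CompCompA} together with the bound $\sum_{n\le N}\Delta t\,\TV(w_\Delta(t^n)) \leq (T+\Delta t)\,\TV(w_o)$ coming from \eqref{eq:BVBounds1}; and the $\abs{w_{j+1}^n - w_j^n}^2$ terms, dominated by $\TV(w_o)\,\abs{w_{j+1}^n - w_j^n}$, are summable for the same reason. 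This yields \eqref{eq:CompCompB}.

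I expect the main obstacle to lie precisely where DiPerna's continuous-flux argument has to be upgraded to the discontinuous flux: the stationary non-classical shock at $x = x_{j+1/2}$ has no analogue in \cite{DiPerna1983}, and one must both show that its strength squared is genuinely absorbed into the cell-wise quadratic-entropy budget up to an $\mathcal{O}(\abs{w_{j+1}^n - w_j^n})$ error — for which the special form $f = \rho\,\sV$ and Proposition \ref{pp:FluxAJG} are exactly what make the bookkeeping close — and verify that the constants $c, C$ are uniform in $n$, $j$ and in the mesh, which is where \eqref{eq:CFL} is used with a little room to spare so that the position factors of the moving shocks stay bounded away from $0$.
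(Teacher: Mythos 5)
You are on the paper's intended route in the broad sense: the paper gives no written proof of Lemma \ref{lmm:CompCompB}, it only says that \eqref{eq:CompCompA} can be converted into the spatial estimate \eqref{eq:CompCompB} ``following the arguments of \cite[Page 67]{DiPerna1983}'', and your plan (quadratic entropy budget on $P_{j+1/2}^n$, interface surplus handled through Proposition \ref{pp:FluxAJG} and the $\TV$ bound \eqref{eq:BVBounds1}) is the right frame for that conversion. The problem is the inequality you place at the heart of the argument,
\[
V_{j+1/2}^n \;\geq\; c\,\Delta x\bigl[(u_j^n-\rho_{j+1/2}^{n,-})^2+(\rho_{j+1/2}^{n,-}-\rho_{j+1/2}^{n,+})^2+(\rho_{j+1/2}^{n,+}-u_{j+1}^n)^2\bigr]-C\,\abs{w_{j+1}^n-w_j^n}^2\,\Delta x .
\]
This cell-wise bound is false with mesh-uniform constants, and the two justifications you give do not hold. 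The CFL condition \eqref{eq:CFL} bounds wave speeds from \emph{above}, not below: it confines the fan to $\open{x_j}{x_{j+1}}$ but does not keep the classical waves away from the interface. Take $w_j^n=w_{j+1}^n$ and $u_j^n<\alpha(w_j^n)<u_{j+1}^n$ with $f_j^n(u_j^n)$ slightly larger than $f_j^n(u_{j+1}^n)$: the Riemann solution is a single admissible shock of strength $\delta=u_{j+1}^n-u_j^n=O(1)$ and speed $\sigma\approx 0^-$, so the traces are $\rho_{j+1/2}^{n,\pm}=u_{j+1}^n$, the $w$-correction vanishes, yet the plateau of value $\rho_{j+1/2}^{n,-}$ has length $\abs{\sigma}\Delta t$ and $V_{j+1/2}^n\approx\abs{\sigma}\Delta t\,\delta^2$, which can be arbitrarily smaller than $c\,\Delta x\,\delta^2$; your ``position factor $\theta(1-\theta)$ kept away from $0$'' argument is thus backwards. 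Likewise, a rarefaction of strength $\delta$ occupies a region of width $O(\Delta t\,\delta)$, so it contributes $O(\Delta t\,\delta^3)$ to the variance, not $\gtrsim\Delta x\,\delta^2$ for small $\delta$. Nor does the second bound of \eqref{eq:CompCompA} rescue these cases: the dissipation of a shock of strength $\delta$ over one step is $O(\Delta t\,\delta^3)\ll\Delta x\,\delta^2$.

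The consequence is structural, not cosmetic: \eqref{eq:CompCompB} cannot be deduced by comparing, cell by cell and step by step, the left-hand side with the entropy dissipated in that same cell and step, because slow waves and small-amplitude oscillations cost $\Delta x\,\delta^2$ per step on the left-hand side while producing only $O(\Delta t\,\delta^3)$ of dissipation per step. What makes the statement true is a mechanism that is global in time: under the genuine nonlinearity \eqref{eq:StrictConcavity}, mesh-scale oscillations of amplitude $\delta$ are damped after a time of order $\Delta x/\delta$, so the sum over $n$ of the squared wave strengths converges even though no individual term is controlled by that step's dissipation; this is what the argument of \cite[p.~67]{DiPerna1983} (also invoked in \cite{KMRTriangular, KMRWellBalanced}) is designed to exploit, with the germ/interface structure of Section \ref{ssec:DiscontinuousFlux} entering, as you correctly anticipate, to handle the stationary non-classical jump up to $O(\abs{w_{j+1}^n-w_j^n})$ errors. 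So your identification of where the discontinuous-flux features enter is sound, but the pivotal lower bound, as stated and justified, is a genuine gap and the per-cell strategy built on it would fail.
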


We are now in position to prove the $\Hloc{-1}$ compactness of the sequence of measures 
that defines the entropy dissipation.

\begin{lemma}
    \label{lmm:CompCompC}
    Assume that $V_{\min}, V_{\max} \in \Lip([0, 1], \R^+)$ satisfy 
    \eqref{eq:VelocitiesAssumption}-\eqref{eq:vacuum}.
    Fix $\rho_o \in \L{\infty}(\R, [\eps, 1])$ and $w_o \in \BV(\R, [0, 1])$.
    Let $w$ the limit function of Proposition \ref{pp:Compactness1} and let 
    $(S_i, Q_i)_{i \in \{1, 2\}}$ be the entropy/entropy flux pairs defined in 
    Lemma \ref{lmm:CompComp}. Then for any $i \in \{1, 2\}$, the sequence of 
    distributions $\mu_i$ defined by 
    \[
        \mu_i(\phi) \coloneq \int_{0}^{+\infty} \int_{\R} 
        S_i(\rho_\Delta) \p_t \phi + Q_i(\rho_\Delta, w)  \p_x \phi \; \d{x} \d{t}, 
        \quad \phi \in \Cc{\infty}(\R^+ \times \R, \R),
    \]
    
    belongs to a compact subset of $\Hloc{-1}(\open{0}{+\infty} \times \R, \R)$.
\end{lemma}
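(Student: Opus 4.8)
The plan is to verify the hypotheses of the Murat-type interpolation Lemma \ref{lmm:CompCompTechnical}. Hypothesis (ii) is for free: since $\rho_\Delta, w, w_\Delta$ are valued in $[0,1]$ and $f$ is smooth, $S_i(\rho_\Delta)$ and $Q_i(\rho_\Delta, w)$ are uniformly bounded in $\L{\infty}$, so $\mu_i = \p_t S_i(\rho_\Delta) + \p_x Q_i(\rho_\Delta, w)$ is bounded in $\Wloc{-1}{r}$ for every $r$. Hence the whole content is hypothesis (i), the relative compactness of $(\mu_i)_\Delta$ in $\Wloc{-1}{q}$ for some $1 < q < 2$. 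First I would replace the limit $w$ by the approximation $w_\Delta$: as $w_\Delta \to w$ in $\Czero(\R^+, \Lloc{1})$ with a uniform $\L{\infty}$ bound, $w_\Delta \to w$ in $\Lloc{2}(\R^+\times\R)$, and since $S_i, Q_i$ depend Lipschitz-continuously on the $w$-variable, $\mu_i$ differs from the entropy-dissipation distribution $E_\Delta$ of \eqref{eq:EntropyDissipation}--\eqref{eq:EntropyDissipation2} (built with $w_\Delta$ in place of $w$) by $\p_t$- and $\p_x$-derivatives of functions vanishing in $\Lloc{2}$, hence by a sequence tending to $0$ in $\Hloc{-1}$. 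It therefore suffices to prove, for each pair $(S_i, Q_i)$, that $E_\Delta$ is relatively compact in $\Wloc{-1}{q}$ for some $q < 2$.

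For this I would use the splitting $E_\Delta = I_1 + I_2 + I_3 + I_4$ of \eqref{eq:EntropyDissipation2}; on test functions supported in $\open{0}{+\infty}\times\R$ one has $I_1 = 0$. Because the domain is two-dimensional, $\cM_{loc}$ embeds compactly into $\Wloc{-1}{q}$ for $1 < q < 2$, so it is enough to write each of $I_2, I_3, I_4$ as a sequence bounded in $\cM_{loc}$ plus a sequence tending to $0$ in $\Wloc{-1}{\infty}$. For $I_2$, I would split $\varphi(t^n, x) = \varphi(t^n, x_j) + (\varphi(t^n,x) - \varphi(t^n, x_j))$ on each $I_j$: by $\int_{I_j}(\rho_\Delta(t^n-, \cdot) - u_j^n) = 0$ and a second-order Taylor expansion of $S_i$, the cell-center part has cell-coefficient of the order of $\int_{I_j}(\rho_\Delta(t^n-, \cdot) - u_j^n)^2$, hence is bounded in $\cM_{loc}$ by the first estimate of Lemma \ref{lmm:CompCompA}, while the remaining part, after a Cauchy--Schwarz on $I_j$, is $O(\sqrt{\Delta x})$ in $\Wloc{-1}{\infty}$. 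For $I_3$, the classical-shock jump $[\![\sigma S_i(\rho_\Delta) - Q_i(\rho_\Delta, w_\Delta)]\!]_y$ vanishes to third order in the jump size and, on $[\eps,1]$ with a uniformly concave flux, is controlled pointwise by the corresponding quantity for the entropy $u \mapsto u^2/2$; thus $I_3$ is bounded in $\cM_{loc}$ by the second estimate of Lemma \ref{lmm:CompCompA}.

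The delicate term is $I_4$, the contribution of the stationary non-classical shocks at the interfaces $x = x_{j+1/2}$. There I would use that, for $t \in \open{t^n}{t^{n+1}}$, the traces of $\rho_\Delta$ at $x_{j+1/2}$ form a couple in the germ $\cG_{j+1/2}^n$, that $\cR$ vanishes on the germ, that $\cR(k,k) = \abs{f_j^n(k) - f_{j+1}^n(k)} \leq \mu\abs{w_{j+1}^n - w_j^n}$ by Remark \ref{rk:RemainderTerm} and Lemma \ref{lmm:ConsistencyInterfaceFlux}, and the inequality of Proposition \ref{pp:FluxAJG} (ii). Resolving $(S_i, Q_i)$ into Kruzhkov entropies, these combine into a one-sided interface estimate: up to a sign depending on $i$,
\[
    Q_i(\rho_\Delta(t,x_{j+1/2}-), w_j^n) - Q_i(\rho_\Delta(t,x_{j+1/2}+), w_{j+1}^n) \geq -C\abs{w_{j+1}^n - w_j^n}
\]
(and for $i = 1$ this is even a two-sided bound, which alone puts $I_4$ in $\cM_{loc}$). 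Consequently $\widetilde I_4 := \pm I_4 + C\sum_{n,j}\abs{w_{j+1}^n - w_j^n}\int_{t^n}^{t^{n+1}}\varphi(\cdot, x_{j+1/2})$, the sign matching the estimate above, is a nonnegative distribution, hence a nonnegative measure. To bound it in $\cM_{loc}$ I would test it against a fixed cutoff $\psi$: writing $\widetilde I_4 = \pm(E_\Delta - I_2 - I_3) + C\sum_{n,j}\abs{w_{j+1}^n - w_j^n}\int_{t^n}^{t^{n+1}}\varphi(\cdot, x_{j+1/2})$, the value at $\psi$ of each summand is bounded uniformly in $\Delta$ — $E_\Delta(\psi)$ since $S_i(\rho_\Delta), Q_i(\rho_\Delta, w_\Delta) \in \L{\infty}$; $I_2(\psi), I_3(\psi)$ by the preceding paragraph; the last sum since $\TV(w_\Delta(t^n)) \leq \TV(w_o)$ by Proposition \ref{pp:Compactness1} — and nonnegativity of $\widetilde I_4$ promotes this to a uniform $\cM_{loc}$ bound. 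The explicit $w$-variation correction being itself bounded in $\cM_{loc}$, we get $I_4$ relatively compact in $\Wloc{-1}{q}$; with the bounds on $I_2, I_3$, so is $E_\Delta$, hence so is $(\mu_i)_\Delta$, and Lemma \ref{lmm:CompCompTechnical} concludes.

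The step I expect to resist most is the handling of $I_4$: producing the one-sided interface entropy estimate in the right form for the non-Kruzhkov pairs of Lemma \ref{lmm:CompComp} — in particular the flux-dependent pair $(S_2, Q_2)$, where one must also keep track of and absorb (via the $\BV$ bound on $w_\Delta$ and Lemma \ref{lmm:CompCompB}) the extra contributions created by the $w$-dependence of the entropy — and turning ``bounded against one cutoff'' into a bona fide $\cM_{loc}$ bound using the positivity of $\widetilde I_4$. By contrast, the cell-average cancellation behind $I_2$ and the classical-shock comparison behind $I_3$ are classical, in the spirit of DiPerna, and the reduction from $w$ to $w_\Delta$ is routine.
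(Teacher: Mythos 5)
Your architecture is essentially the paper's (replace $w$ by $w_\Delta$ using Proposition \ref{pp:Compactness1}, use the splitting \eqref{eq:EntropyDissipation2}, put the singular parts into locally bounded measures, get the $\Wloc{-1}{r}$ bound from $\L{\infty}$ bounds, conclude by Lemma \ref{lmm:CompCompTechnical}), but there is one genuine gap: the treatment of the remainder part of $I_2$, i.e. the term carrying $\phi(t^n,x)-\phi(t^n,x_j)$. Your Cauchy--Schwarz computation gives $\abs{I_{2,2}(\phi)}\leq C\sqrt{\Delta x}\,\norm{\p_x\phi}_{\L{\infty}}$, which is smallness as a functional on Lipschitz test functions; it is not smallness in $\Wloc{-1}{\infty}$, and, more importantly, it gives no control whatsoever in $\Wloc{-1}{q}$ for finite $q<2$, because the $\W{1}{q'}$-norm of a test function ($q'>2$ finite) does not dominate its Lipschitz norm. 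So this bound cannot be fed into hypothesis (i) of Lemma \ref{lmm:CompCompTechnical}, which is precisely what you need to verify; nor is $I_{2,2}$ bounded in measures (its total mass is of order $\sqrt{c_1 TX/\Delta t}$, so all the smallness must come from the modulus of continuity of $\phi$). The missing step is the interpolation the paper performs: estimate the oscillation by $\norm{\phi}_{\Ck{0, \alpha}}\Delta x^{\alpha}$ with $\alpha\in\open{\frac{1}{2}}{1}$, which yields $\abs{I_{2,2}(\phi)}\leq C\,\Delta x^{\alpha-1/2}\norm{\phi}_{\Ck{0, \alpha}}$, and then use the two-dimensional Sobolev embedding of $\W{1}{p}$ into $\Ck{0, \alpha}$ with $p=2/(1-\alpha)$ to convert this into convergence to $0$ in $\W{-1}{q_3}$, $q_3=2/(1+\alpha)<2$, which does enter the Murat lemma. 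Your margin of $\sqrt{\Delta x}$ is exactly what makes this possible, but as stated the claim ``$O(\sqrt{\Delta x})$ in $\Wloc{-1}{\infty}$'' does not follow.

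On $I_4$ you take a genuinely different route from the paper for the pair $(S_2,Q_2)$: a one-sided interface estimate obtained by resolving into Kruzhkov entropies via Proposition \ref{pp:FluxAJG}, then a positivity-plus-cutoff argument to promote it to a local measure bound. This can be made to work (the mismatch terms coming from the $\omega$-dependence of $S_2,Q_2$ across the interface are of size $\abs{w_{j+1}^n-w_j^n}$ and are absorbed by the $\BV$ bound, as you anticipate), but it is more roundabout than necessary: the paper bounds $\abs{[\![ Q_2(\rho_\Delta,w_\Delta) ]\!]}$ at each interface directly and two-sidedly, using the Rankine--Hugoniot relation between the traces, the Lipschitz dependence of $\p_\rho f$ on $\omega$, and -- when $(f_j^n)'$ changes sign between the traces -- the quadratic trace estimate of Lemma \ref{lmm:CompCompB}; no sign or maximality considerations are needed, and your parenthetical remark for $i=1$ is exactly the paper's $I_{4,1}$ argument. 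Your comparison of the classical-shock production for general entropies with that of $u\mapsto u^2/2$ in $I_3$ is a point the paper glosses over (Lemma \ref{lmm:CompCompA} is stated for the quadratic entropy) and is indeed the right way to fill it, using the uniform concavity of $f$ on the compact state space.
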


\begin{proof}
    We follow the proofs of \cite[Lemma 5.5]{KMRTriangular} or 
    \cite[Lemma 4.5]{KMRWellBalanced}. To start, consider $(S, Q)$ a smooth 
    entropy/entropy flux pair.

    Let us fix a bounded open subset 
    $\Omega \subset \open{0}{+\infty} \times \R$, say 
    $\Omega \subset [0, T] \times [-X, X]$ for some $T > 0$ and $X > 0$. 
    Let $N, J \in \N \setminus \{0\}$ such that $T \in [t^N, t^{N+1} \mathclose[$ and 
    $X \in I_{J}$, and finally, let $\phi \in \Cc{\infty}(\Omega, \R)$. We split $\mu$ as 
    \[
        \mu(\phi) = 
        \int_{0}^{+\infty} \int_{\R} (Q(\rho_\Delta, w) - Q(\rho_\Delta, w_\Delta)) 
        \p_x \phi \; \d{x} \d{t} + E_\Delta(\phi),
    \]

    where $E_\Delta(\phi)$ is given by \eqref{eq:EntropyDissipation}. 

    The strong compactness established in Proposition \ref{pp:Compactness1} ensures 
    that for any $q_1 \in \mathopen]1, 2]$, the first term of the right-hand side is 
    strongly compact in $\W{-1}{q_1}(\Omega, \R)$. We now estimate $E_\Delta(\phi)$.

    First, observe that since $(\rho_\Delta)_\Delta$ and $(w_\Delta)_\Delta$ are 
    bounded in $\L{\infty}(\Omega, \R)$, we have 
    \[
        \abs{E_\Delta(\phi)} \leq \biggl(\sup_{p \in [\eps, 1]} \abs{S(p)} 
        + \sup_{p, \omega \in [0, 1]} \abs{Q(p, \omega)} \biggr) \cdot 
        \norm{\phi}_{\W{1}{\infty}(\Omega)},    
    \]

    implying that $(E_\Delta(\phi))_\Delta$ is bounded in $\Wloc{-1}{r}(\Omega, \R)$ 
    for any $r \in \open{2}{+\infty}$.

    Keeping the notations of \eqref{eq:EntropyDissipation2}, we bound $I_1(\phi)$ and 
    $I_3(\phi)$ as follows:
    \[
        \abs{I_1(\phi)} 
        \leq \norm{S(\rho_o)}_{\L{1}([-X, X])} \; \norm{\phi}_{\L{\infty}(\Omega)}, 
        \quad I_3(\phi) \leq c_1 \norm{\phi}_{\L{\infty}(\Omega)},
    \]

    where we used Lemma \ref{lmm:CompCompA} for $I_3(\phi)$. Consequently, 
    $I_1(\phi)$ and $I_3(\phi)$ are bounded in the space $\cM(\Omega, \R)$ of 
    bounded Radon measures, which is compactly embedded in $\W{-1}{q_2}(\Omega, \R)$ if 
    $q_2 \in \open{1}{2}$. Therefore, $I_1(\phi)$ and $I_3(\phi)$ belong to a compact subset of 
    $\W{-1}{q_2}(\Omega, \R)$.

    Now, to estimate $I_2(\phi)$, split it as $I_2(\phi) = I_{2,1} (\phi) + I_{2, 2}(\phi)$ where 
    \[
        \begin{aligned}
            I_{2, 1}(\phi) & \coloneq \sum_{n=1}^{N} \sum_{\abs{j} \leq J} \int_{I_j} 
            (S(\rho_\Delta(t^{n}-, x)) - S(u_j^n)) \; \phi(t^n, x_j) \d{x} \\
            I_{2, 2}(\phi) & \coloneq \sum_{n=1}^{N} \sum_{\abs{j} \leq J} \int_{I_j} 
            (S(\rho_\Delta(t^{n}-, x)) - S(u_j^n)) \;
            (\phi(t^n, x) - \phi(t^n, x_j)) \d{x}.
        \end{aligned}
    \]

    We handle $I_{2,1}(\phi)$ by introducing an intermediary point $\chi_j^n$ such that 
    \[
        S(\rho_\Delta(t^{n}-, x)) - S(u_j^n) 
        = S'(u_j^n) (\rho_\Delta(t^{n}-, x) - u_j^n) 
        + \frac{S''(\chi_j^n)}{2} (\rho_\Delta(t^{n}-, x) - u_j^n)^2.
    \]

    Taking into account the definition of $u_j^n$ and Lemma \ref{lmm:CompCompA}, we 
    obtain
    \[
        \abs{I_{2, 1}(\phi)} \leq \frac{c_1}{2} \sup_{p \in [\eps, 1]} S''(p) \cdot
        \norm{\phi}_{\L{\infty}(\Omega)},
    \]

    ensuring that $I_{2, 1}(\phi)$ belong to a compact subset of $\W{-1}{q_2}(\Omega, \R)$.

    We continue by choosing $\alpha \in \open{\frac{1}{2}}{1}$ and then writing:
    \[
        \begin{aligned}
            \abs{I_{2,2}(\phi)} 
            & \leq \norm{\phi}_{\Ck{0, \alpha}(\Omega)} \Delta x^\alpha
            \sum_{n=1}^{N} \sum_{\abs{j} \leq J} \int_{I_j} 
            \abs{S(\rho_\Delta(t^{n}-, x)) - S(u_j^n)} \d{x}  \\
            & \leq \norm{\phi}_{\Ck{0, \alpha}(\Omega)} \Delta x^{\alpha-1}
            \left\{ \sum_{n=1}^{N} \sum_{\abs{j} \leq J} \left(\int_{I_j} 
            \abs{S(\rho_\Delta(t^{n}-, x)) - S(u_j^n)} \d{x} \right)^2 \right\}^{1/2}
            \left\{ \sum_{n=1}^{N} \sum_{\abs{j} \leq J} \Delta x^2 \right\}^{1/2} \\
            & \leq \sqrt{\frac{2TX}{\lambda}} \norm{\phi}_{\Ck{0, \alpha}(\Omega)} \Delta x^{\alpha-1/2}
            \left\{ \sum_{n=1}^{N} \sum_{\abs{j} \leq J} \int_{I_j} 
            \abs{S(\rho_\Delta(t^{n}-, x)) - S(u_j^n)}^2 \d{x} \right\}^{1/2} \\
            & \leq \sqrt{\frac{2 T X c_1}{\lambda}} 
            \sup_{p \in [\eps, 1]} \abs{S'(p)} \cdot 
            \norm{\phi}_{\Ck{0, \alpha}(\Omega)} \; \Delta x^{\alpha-1/2} \qquad
            \text{Lemma \ref{lmm:CompCompA}}
        \end{aligned}
    \]

    We now take advantage of the compact embedding 
    $\Ck{0, \alpha}(\Omega, \R) \subset \W{1}{p}(\Omega, \R)$, $p=2/(1-\alpha)$ to be 
    sure that $I_{2,2} (\phi)$ belongs to a compact subset of $\W{-1}{q_3}(\Omega, \R)$, 
    $q_3\coloneq 2/(1+\alpha) \in \open{1}{\frac{4}{3}}$.

    Finally, we estimate $I_4(\phi)$. Let $k \in [0, 1]$ and consider 
    $(S_i, Q_i)_{i \in \{1, 2\}}$ the entropy/entropy flux pairs defined in 
    Lemma \ref{lmm:CompComp}. Because of the Rankine-Hugoniot condition,
    \[
        \abs{[\![ Q_1(\rho_\Delta, w_\Delta) ]\!]_{x_{j+1/2}}^n}
        = \abs{f_j^n (k) - f_{j+1}^n (k)},
    \]

    from which we deduce,
    \[
        \begin{aligned}
            \abs{I_{4,1}(\phi)}
            & \leq \sum_{n=0}^{N} \sum_{\abs{j} \leq J} \int_{t^{n}}^{t^{n+1}} 
            \abs{f_j^n (k) - f_{j+1}^n (k)} \phi(t, x_{j+1/2}) \; \d{t} \\
            & \leq \sup_{p \in [\eps, 1]} \abs{f_{\min}(p) - f_{\max}(p)} 
            \cdot \norm{\phi}_{\L{\infty}(\Omega)} \sum_{n=0}^{N} 
            \TV(w_\Delta(t^n)) \; \Delta t,
        \end{aligned}
    \]

    ensuring, using once more Proposition \ref{pp:Compactness1}, that $I_{4, 1}(\phi)$ belongs to a compact subset of $\W{-1}{q_2}(\Omega, \R)$. 

    On the other hand,
    \[
        \begin{aligned}
            \abs{[\![ Q_2(\rho_\Delta, w_\Delta) ]\!]_{x_{j+1/2}}^n}
            & = \int_{\rho_{j+1/2}^{n,+}}^{\rho_{j+1/2}^{n,-}} 
            \p_\rho f(\xi, w_j^n)^2 \d{\xi} 
            + \int_{k}^{\rho_{j+1/2}^{n,+}} \p_\rho f(\xi, w_j^n)^2
            - \p_\rho f(\xi, w_{j+1}^n)^2 \d{\xi} \\
            & \leq \int_{\rho_{j+1/2}^{n,+}}^{\rho_{j+1/2}^{n,-}} 
            \p_\rho f(\xi, w_j^n)^2 \d{\xi} + (2L)^2 \abs{w_{j+1}^n-w_j^n} \\
            & \leq L \underbrace{\int_{\rho^{-}}^{\rho^{+}} 
            \abs{\p_\rho f(\xi, w_j^n)} \d{\xi}}_{Q_{2,1}} + (2L)^2 \abs{w_{j+1}^n-w_j^n}, 
            \quad \rho^{-} \coloneq \min\left\{\rho_{j+1/2}^{n,-}, \rho_{j+1/2}^{n,+} \right\}.
        \end{aligned}
    \] 

    If $(f_j^n)'$ does not change sign in $\open{\rho^-}{\rho^+}$, then 
    \[
        \begin{aligned}
            \abs{Q_{2, 1}} 
            & = \abs{f_j^n (\rho_{j+1/2}^{n,-}) - f_j^n (\rho_{j+1/2}^{n,+})} \\
            & \leq \underbrace{\abs{f_j^n (\rho_{j+1/2}^{n,-}) 
            - f_{j+1}^n (\rho_{j+1/2}^{n,+})}}_{=0} 
            + \abs{f_{j+1}^n (\rho_{j+1/2}^{n,+}) - f_j^n (\rho_{j+1/2}^{n,+})} \\
            & \leq \sup_{p \in [\eps, 1]} \abs{f_{\min}(p) - f_{\max}(p)}
            \cdot \abs{w_{j+1}^n - w_j^n}. 
        \end{aligned}
    \]

    Otherwise, we can assume that $\rho^- \leq \alpha_j^n \leq \rho^+$. 
    Then, setting $L_f \coloneq \sup_{p, \omega \in [0, 1]} \abs{\p_{\rho}^2 f(p, \omega)}$, we write
    \[
        \begin{aligned}
            \int_{\rho^{-}}^{\rho^{+}} \abs{\p_\rho f(\xi, w_j^n) - 
            \p_\rho f(\alpha_j^n, w_j^n)} \d{\xi}
            \leq L_f \int_{\rho^{-}}^{\rho^{+}} \abs{\xi - \alpha_j^n} \d{\xi}
            \leq L_f (\rho^+ - \rho^-)^2
            = L_f \; (\rho_{j+1/2}^{n,-} - \rho_{j+1/2}^{n,+})^2.
        \end{aligned}
    \]

    Taking advantage of Lemma \ref{lmm:CompCompB}, we conclude that 
    \[
        \begin{aligned}
            & \sum_{n=0}^{N} \Delta t \sum_{\abs{j} \leq J}  
            \abs{[\![ Q_2(\rho_\Delta, w_\Delta) ]\!]_{x_{j+1/2}}^n} 
            \cdot \norm{\phi}_{\L{\infty}(\Omega)} \\
            & \leq L \left\{L_f + \sup_{p \in [\eps, 1]} \abs{f_{\min}(p) - f_{\max}(p)} 
            + 4L \right\} \sum_{n=0}^{N} \Delta t \sum_{\abs{j} \leq J}  
            \left\{ (\rho_{j+1/2}^{n,-} - \rho_{j+1/2}^{n,+})^2 
            + \abs{w_{j+1}^n-w_j^n} \right\} \cdot \norm{\phi}_{\L{\infty}(\Omega)} \\
            & \leq L \left\{L_f + \sup_{p \in [\eps, 1]} \abs{f_{\min}(p) - f_{\max}(p)} 
            + 4L \right\} (\lambda c_2 + \sum_{n=0}^N \TV(w_\Delta(t^n)) \Delta t) \cdot \norm{\phi}_{\L{\infty}(\Omega)},
        \end{aligned}
    \]

    which ensures  $I_{4, 2}(\phi)$ belongs to a compact subset of 
    $\W{-1}{q_2}(\Omega, \R)$. 

    To summarize, for any $i \in \{1, 2\}$, $(\mu_i)_\Delta$ belongs to a compact 
    subset of $\Wloc{-1}{q}(\open{0}{+\infty} \times \R, \R)$, 
    $q \coloneq \min\{q_1, q_2, q_3\} < 2$. Additionally, $(\mu_i)_\Delta$ is bounded in 
    $\Wloc{-1}{r}(\open{0}{+\infty} \times \R, \R)$ for any $r \in \open{2}{+\infty}$. 
    Lemma \ref{lmm:CompCompTechnical} applies to ensure that for any $i \in \{1, 2\}$, 
    $(\mu_i)_\Delta$ belongs to a compact subset of $\Hloc{-1}(\open{0}{+\infty} \times \R, \R)$.
\end{proof}

\begin{corollary}
    \label{cor:CompComp}
    Assume that $V_{\min}, V_{\max} \in \Lip([0, 1], \R^+)$ satisfy 
    \eqref{eq:VelocitiesAssumption}-\eqref{eq:vacuum}.
    Fix $\rho_o \in \L{\infty}(\R, [\eps, 1])$ and $w_o \in \BV(\R, [0, 1])$. 
    Let $(\rho_\Delta, w_\Delta)_\Delta$ be the sequence generated by the scheme 
    described in Section \ref{sec:Main}.   
    Then, there exists a limit function 
    $\rho \in \L{\infty}(\open{0}{+\infty} \times \R, \R)$ 
    such that along a subsequence as $\Delta \to 0$, $(\rho_\Delta)_\Delta$ converges 
    in $\Lloc{p}(\open{0}{+\infty} \times \R, \R)$ for all 
    $p \in [1, +\infty \mathclose[$ and a.e. on 
    $\open{0}{+\infty} \times \R$ to $\rho$. 
\end{corollary}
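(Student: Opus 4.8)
The plan is simply to assemble the three preparatory results proved above. First I would invoke Lemma \ref{lmm:StabilityScheme}: under the CFL condition \eqref{eq:CFL}, the approximate densities satisfy $\eps \leq u_j^n \leq 1$ for all $n, j$, so $(\rho_\Delta)_\Delta$ is a bounded sequence of $\L{\infty}(\open{0}{+\infty} \times \R, \R)$. Then I would appeal to Proposition \ref{pp:Compactness1} to extract a subsequence (which I do not relabel) along which $w_\Delta \to w$ in $\Czero(\R^+, \Lloc{1}(\R))$, with $w \in \Czero(\R^+, \Lloc{1}(\R)) \cap \Lloc{\infty}(\R^+, \BV(\R))$; in particular this limit $w$ fulfils the regularity hypothesis of Lemma \ref{lmm:CompComp}.

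Next, for an arbitrary but fixed $k \in \R$, let $(S_i, Q_i)_{i \in \{1, 2\}}$ be the entropy/entropy flux pairs of \eqref{eq:CompCompEntropies}. Lemma \ref{lmm:CompCompC} tells us that for each $i \in \{1, 2\}$ the sequence of distributions
\[
    \mu_i(\phi) = \int_{0}^{+\infty} \int_{\R} S_i(\rho_\Delta) \p_t \phi + Q_i(\rho_\Delta, w) \p_x \phi \; \d{x} \d{t},
    \quad \phi \in \Cc{\infty}(\R^+ \times \R, \R),
\]
lies in a compact subset of $\Hloc{-1}(\open{0}{+\infty} \times \R, \R)$. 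Since $\mu_i = -(\p_t S_i(\rho_\Delta) + \p_x Q_i(\rho_\Delta, w))$ in the distributional sense, and since $k$ was arbitrary, this is precisely the hypothesis required to apply Lemma \ref{lmm:CompComp} to the bounded sequence $(\rho_\Delta)_\Delta$ with the fixed field $w$.

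Lemma \ref{lmm:CompComp} then furnishes a further subsequence of $(\rho_\Delta)_\Delta$ and a function $\rho \in \L{\infty}(\open{0}{+\infty} \times \R, \R)$ such that $\rho_\Delta \to \rho$ in $\Lloc{p}(\open{0}{+\infty} \times \R, \R)$ for every $p \in [1, +\infty \mathclose[$ and a.e. on $\open{0}{+\infty} \times \R$, which is the assertion. There is no substantial obstacle left at this stage: all the delicate estimates — the $\L{\infty}$ bound, the $\BV$ bound, the control of the time-level jumps and of the classical and non-classical shock contributions, and the $\W{-1}{q}$/$\W{-1}{r}$ interpolation — have already been established in Lemmas \ref{lmm:StabilityScheme}, \ref{lmm:CompCompA}, \ref{lmm:CompCompB} and \ref{lmm:CompCompC}, and the corollary is their direct combination.
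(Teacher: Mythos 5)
Your proposal is correct and takes essentially the same route as the paper's (one-line) proof: the corollary is precisely the application of the compensated compactness Lemma \ref{lmm:CompComp} to the $\L{\infty}$-bounded family $(\rho_\Delta)_\Delta$ (Lemma \ref{lmm:StabilityScheme}), with the fixed coefficient $w$ supplied by Proposition \ref{pp:Compactness1} and the $\Hloc{-1}$ compactness of $\mu_i = -(\p_t S_i(\rho_\Delta) + \p_x Q_i(\rho_\Delta, w))$ supplied by Lemma \ref{lmm:CompCompC}. Your bookkeeping of the two subsequence extractions and of the arbitrary $k$ is exactly what is needed, so nothing is missing.
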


\begin{proof}
    Combination of Lemma \ref{lmm:CompCompC} and Lemma \ref{lmm:CompCompA}.
\end{proof}

\subsubsection{Convergence}

Notice that so far, the proof does not require $w_o$ to have any particular structure, meaning that for any $\BV$ function taking values in $[0, 1]$, the scheme will converge to a limit. The additional structure of $w_o$ is only used to identify the limit.

\begin{proofof}{Theorem \ref{th:ConvergenceScheme}}
    Let $(\rho, w)$ be the limit functions from Proposition \ref{pp:Compactness1} and 
    Corollary \ref{cor:CompComp}. We prove that all the items of Definition \ref{def:gsom} hold.
    Let us split the proof into a few steps.

    \paragraph{Step 1.} 
    Let us introduce the piecewise constant function
    \[
        \overline{\rho}_\Delta 
        \coloneq \sum_{n=0}^{+\infty} \sum_{j \in \Z} u_j^n 
        \1_{[t^n, t^{n+1} \mathclose[ \times I_j}.        
    \]
    
    We claim that 
    $\norm{\rho_\Delta - \overline{\rho}_\Delta}_{\Lloc{2}(\R^+ \times \R)} \limit{\Delta}{0} 0$. 
    Indeed, for all 
    $(t,x) \in [t^n, t^{n+1}\mathclose[ \times \open{x_{j+1/2}}{x_{j+1}}$,
    \[
        \abs{\rho_\Delta(t, x) - \overline{\rho}_\Delta(t, x)}
        = \abs{\cU^n(t, x) - u_{j+1}^n} 
        \leq \abs{\rho_{j+1/2}^{n, +} - u_{j+1}^n},
    \]
    
    since $\cU^n$ is the solution of a Riemann problem with left state $u_j^n$ and 
    right state $u_{j+1}^n$ at $x = x_{j+1/2}$. Likewise, for all 
    $(t, x) \in [t^n, t^{n+1} \mathclose[ \times \open{x_j}{x_{j+1/2}}$,
    \[
        \abs{\rho_\Delta(t, x) - \overline{\rho}_\Delta(t, x)}
        = \abs{\cU^n(t, x) - u_j^n} 
        \leq \abs{\rho_{j+1/2}^{n, -} - u_{j}^n}.
    \]
    
    Therefore, by Lemma \ref{lmm:CompCompB},
    \[
        \begin{aligned}
            \norm{\rho_\Delta - \overline{\rho}_\Delta}_{\Lloc{2}(\R^+ \times \R)}^2 
            & = \sum_{n=0}^N \sum_{\abs{j} \leq J} 
            \int_{t^n}^{t^{n+1}} \int_{x_j}^{x_{j+1}} 
            \abs{\rho_\Delta(t, x) - \overline{\rho}_\Delta(t, x)}^2 \d{x} \d{t} \\
            & \leq \frac{1}{2} \sum_{n=0}^N \sum_{\abs{j} \leq J} 
            \left\{ \abs{\rho_{j+1/2}^{n, +} - u_{j+1}^n}^2 
            + \abs{\rho_{j+1/2}^{n, -} - u_{j}^n}^2 \right\} \Delta x \Delta t
            \leq \frac{c_1}{2} \Delta t,
        \end{aligned} 
    \]
    
    proving the claim.

    \paragraph{Step 2: $(\rho, f(\rho, w))$ is divergence free.}  
    
    Let $\phi \in \Cc{\infty}(\R^+ \times \R, \R)$ and fix $T > 0$ such that $\phi(t, x) = 0$ if $t > T$ and $x \in \R$. It will also be useful to fix $N, J \in \N \setminus \{0\}$ such that the support of $\phi$ is included in the subset $[0, t^N] \times [x_{-J}, x_J]$. Define 
    \[
        \forall n \in \N, \; \forall j \in \Z, \quad 
        \phi_j^n \coloneq \frac{1}{\Delta x} \int_{I_j} \phi(t^n, x) \d{x}.
    \]

    Multiply \eqref{eq:MF2} by $\phi_j^n \Delta x$ and take the double sum for 
    $n \in \N$ and $j \in \Z$. A summation by parts provides $A + B  = 0$ with
    \[
        A \coloneq \sum_{n=1}^{+\infty} \sum_{j \in \Z} 
        u_j^{n} (\phi_j^{n} - \phi_j^{n-1}) \Delta x 
        + \sum_{j \in \Z} u_j^{o} \phi_j^{o} \Delta x 
        \quad \text{and} \quad
        B \coloneq \sum_{n=0}^{+\infty} \sum_{j \in \Z} 
        F_{j+1/2}^n (u_j^n, u_{j+1}^n) (\phi_{j+1}^{n} - \phi_j^n) \Delta t.
    \]

    Clearly,
    \[
        A 
        = \int_{\Delta t}^{+\infty} \int_{\R} 
        \overline{\rho}_\Delta \p_t \phi \; \d{x}\d{t} 
        + \int_{\R} \rho_\Delta(0, x) \phi(0, x) \d{x} \limit{\Delta}{0} 
        \int_{0}^{+\infty} \int_{\R} \rho  \p_t \phi \; \d{x}\d{t} 
        + \int_{\R} \rho_o(x) \phi(0, x) \d{x}.
    \]

    On the other hand, using the definition of the interface flux,
    \[
        \begin{aligned}
            B & = \sum_{n=0}^{+\infty} \sum_{j \in \Z} \int_{t^{n}}^{t^{n+1}} 
            \int_{I_j} F_{j+1/2}^n (u_j^n, u_{j+1}^n) \p_x \phi \; \d{x} \d{t} \\
            & + \underbrace{\sum_{n=0}^{+\infty} \sum_{j \in \Z} 
            F_{j+1/2}^n (u_j^n, u_{j+1}^n) 
            \left\{ (\phi_{j+1}^{n} - \phi_j^n) \Delta t - 
            \int_{t^{n}}^{t^{n+1}} \int_{I_j} \p_x \phi \d{x} \d{t} \right\}}_{ \coloneq B_1} \\
            & = \int_{0}^{+\infty} \int_{\R}  
            f(\overline{\rho}_\Delta, w_\Delta) \p_x \phi \; \d{x} \d{t}
            + \underbrace{\sum_{n=0}^{+\infty} \sum_{j \in \Z} \int_{t^{n}}^{t^{n+1}} 
            \int_{I_j} (f(\rho_{j+1/2}^{n,-}, w_j^n) - f(u_j^n, w_j^n)) 
            \p_x \phi \; \d{x} \d{t}}_{ \coloneq B_2} + B_1.
        \end{aligned}
    \]

    Clearly,
    \[
        \abs{B_1} \leq \sup_{\omega, u \in [0, 1]} \abs{f(u, \omega)} \; 
        \left\{ T \; \norm{\p_{xx}^2 \phi}_{\L{\infty}(\R^+, \L{1}(\R))} \Delta x 
        + \norm{\p_{tx}^2 \phi}_{\L{1}(\R^+\times \R)} \Delta t \right\}, 
    \]

    while taking advantage of Lemma \ref{lmm:CompCompB}, 
    \[
        \begin{aligned}
            \abs{B_2} 
            & \leq L \sum_{n=0}^{+\infty} \sum_{j \in \Z} \int_{t^{n}}^{t^{n+1}} 
            \int_{I_j} \abs{\rho_{j+1/2}^{n,-} - u_j^n} \cdot \abs{\p_x \phi} \; \d{x} \d{t} \\
            & \leq L \left\{ \sum_{n=0}^{N} \sum_{\abs{j} \leq J} \abs{\rho_{j+1/2}^{n,-} - u_j^n}^2 \right\}^{1/2} \left\{ \sum_{n=0}^{N} \sum_{\abs{j} \leq J} 
            \left( \int_{t^{n}}^{t^{n+1}} \int_{I_j} \abs{\p_x \phi} \d{x} \d{t} \right)^2 \right\}^{1/2} \\
            & \leq L \sqrt{c_2} \; \norm{\p_x \phi}_{\L{2}(\R^+ \times \R)} \sqrt{\Delta t}.
        \end{aligned}
    \]

    These two bounds ensure that 
    \[
        B \limit{\Delta}{0} 
        \int_{0}^{+\infty} \int_{\R} f(\rho, w) \p_x \phi \; \d{x} \d{t},
    \]

    proving that $\rho$ is a weak solution of $\p_t \rho + \p_x f(\rho, w) = 0$, with initial datum $\rho_o$.

    \paragraph{Step 3: Item \ref{item:WeakFormulation} holds.}
    
    The key point is that \eqref{eq:MF2}-\eqref{eq:MFw} produces a conservative scheme 
    for $\rho w$. For the sake of clarity, we write below $F_{j+1/2}^n$ instead of 
    $F_{j+1/2}^n(u_j^n, u_{j+1}^n)$. First multiply \eqref{eq:MFw} by $u_j^{n+1}$. We get:
    \begin{equation}
        \begin{aligned}
            \label{eq:MF3}
            u_j^{n+1} w_j^{n+1} 
            & = (u_j^{n+1} - \lambda F_{j-1/2}^n) w_j^n
            + \lambda F_{j-1/2}^n w_{j-1}^n \\
            & = u_j^n w_j^n - \lambda \left( F_{j+1/2}^n w_j^n 
            - F_{j-1/2}^n w_{j-1}^n\right).
        \end{aligned}
    \end{equation}

    Keeping the notations of Step 2, multiply \eqref{eq:MF3} by $\phi_j^n \Delta x$ and take the double sum for $n \in \N$ and $j \in \Z$. A summation by parts provides $A + B = 0$ with
    \[
        \begin{aligned}
            A & \coloneq \sum_{n=1}^{+\infty} \sum_{j \in \Z} 
            u_j^{n} w_j^n  (\phi_j^{n} - \phi_j^{n-1}) \Delta x 
            + \sum_{j \in \Z} u_j^{o} w_j^o \phi_j^{o} \Delta x \\
            B & \coloneq \sum_{n=0}^{+\infty} \sum_{j \in \Z} 
            F_{j+1/2}^n w_j^n (\phi_{j+1}^{n} - \phi_j^n) \Delta t.
        \end{aligned}
    \]

    Repeat the process of Step 2 to show that 
    \[
        A + B \limit{\Delta}{0} 
        \int_{0}^{+\infty} \int_{\R} \rho w \p_t \phi + f(\rho, w) w \p_x \phi \; \d{x} \d{t}
        + \int_{\R} \rho_o(x) w_o(x) \phi(0, x) \d{x},
    \]

    which concludes the proof of the initial claim.

    \paragraph{Step 4: Item \ref{item:regularity} holds.} 
    
    In Step 2, we proved that the vector field $(\rho, f(\rho, w))$ is divergence free. In Step 3, we proved that $w$ is a weak solution to 
    \[
        \p_t (\rho w) + \p_x (f(\rho, w) w) = 0.
    \]

    Since $w_o$ is piecewise constant, we deduce from Theorem \ref{th:BeyondPanov} that there exist $M$ ordered curves $y_m \in \Lip(\open{t_m}{T_m}, \R)$ such that setting $\Gamma_m \coloneq \{(t, y_m(t)),\;t \in [t_m, T_m\mathclose[\}$, $w$ is constant in each connected component of $(\R^+ \times \R) \setminus \cup_{m=1}^M \Gamma_m$.

    \paragraph{Step 5: Discrete entropy inequalities.} Fix $n \in \N$, $j \in \Z$ and 
    $k \in [\eps, 1]$. Notice that 
    \[
        \begin{aligned}
            & H_j^n(k, k, k) = k - \lambda(F_{j+1/2}^n(k, k) - F_{j-1/2}^n(k, k)) \\
            &  \implies H_j^n(k, k, k) - \lambda(F_{j+1/2}^n(k, k) - F_{j-1/2}^n(k, k))^- \leq k \leq H_j^n(k, k, k) + \lambda(F_{j+1/2}^n(k, k) - F_{j-1/2}^n(k, k))^+.
        \end{aligned}
    \]
    
    By monotonicity of the scheme,
    \[
        \begin{aligned}
            \abs{u_j^{n+1} - k}
            & = \max\{u_j^{n+1}, k\} - \min\{u_j^{n+1}, k\} \\
            & \leq H_j^n (u_{j-1}^n \vee k, u_j^n \vee k, u_{j+1}^n \vee k) + \lambda(F_{j+1/2}^n(k, k) - F_{j-1/2}^n(k, k))^+ \\
            & - H_j^n (u_{j-1}^n \wedge k, u_j^n \wedge k, u_{j+1}^n \wedge k) + \lambda(F_{j+1/2}^n(k, k) - F_{j-1/2}^n(k, k))^- \\
            & = \abs{u_j^{n} - k} + \lambda (\Phi_{j+1/2}^n - \Phi_{j-1/2}^n) + \lambda \abs{F_{j+1/2}^n(k, k) - F_{j-1/2}^n(k, k)},
        \end{aligned}
    \]

    where 
    \[
        \Phi_{j+1/2}^n\coloneq F_{j+1/2}^n(u_j^n \vee k, u_{j+1}^n \vee k) 
        - F_{j+1/2}^n(u_j^n \wedge k, u_{j+1}^n \wedge k).
    \]

    \paragraph{Step 6: Item \ref{item:EntropyInequalities} holds.} 
    It suffices to prove the claim for $M=1$, which we do now, dropping the $m$ index in the process. Let $\varphi \in \Cc{\infty}(\R^+ \times \R, \R^+)$ and $k \in [0, 1]$. Fix $T, R > 0$ such that the support of $\varphi$ is included in $[0, T] \times [-R, R]$. Set for all $n \in \N$ and $j \in \Z$, $\varphi_j^n \coloneq \frac{1}{\Delta x} \int_{I_j} \varphi(t^n, x) \d{x}$. 

    Multiply the discrete entropy inequalities by $\varphi_j^n \Delta x$ and take the double sum for $n \in \N$ and $j \in \Z$. A summation by parts provides $A + B + C \geq 0$ with
    \[
        \begin{aligned}
            A & \coloneq \sum_{n=1}^{+\infty} \sum_{j \in \Z} 
            \abs{u_j^{n}-k} (\varphi_j^{n} - \varphi_j^{n-1}) \Delta x 
            + \sum_{j \in \Z} \abs{u_j^{o}-k} \varphi_j^{o} \Delta x \\
            B & \coloneq \sum_{n=0}^{+\infty} \sum_{j \in \Z} 
            \Phi_{j+1/2}^n (\varphi_{j+1}^{n} - \varphi_j^n) \Delta t \\
            C & \coloneq \sum_{n=0}^{+\infty} \sum_{j \in \Z} 
            \abs{F_{j+1/2}^n(k, k) - F_{j-1/2}^n(k, k)} \varphi_j^n \Delta t.
        \end{aligned}
    \]

    Clearly, $A \limit{\Delta}{0} \int_{0}^{+\infty} \int_{\R} \abs{\rho - k} \p_t \varphi \; \d{x}\d{t} + \int_{\R} \abs{\rho_o(x) - k} \varphi(0, x) \d{x}$. Consider now $B$ and write 
    \[
        \begin{aligned}
            \Phi_{j+1/2}^n
            & = \underbrace{F_{j+1/2}^n(u_j^n \vee k, u_{j+1}^n \vee k) 
            - F_{j+1/2}^n(u_j^n \vee k, u_{j}^n \vee k)}_{B_1} \\
            & + \underbrace{F_{j+1/2}^n(u_j^n \vee k, u_{j}^n \vee k) 
            - f(u_j^n \vee k, w_j^n)}_{B_{2}} + \; \Phi(u_j^n, w_j^n, k) + 
            \underbrace{f(u_j^n \wedge k, w_j^n) - 
            F_{j+1/2}^n(u_j^n \wedge k, u_{j}^n \wedge k)}_{B_3} \\
            & + \underbrace{F_{j+1/2}^n(u_j^n \wedge k, u_{j}^n \wedge k) 
            - F_{j+1/2}^n(u_j^n \wedge k, u_{j+1}^n \wedge k)}_{B_4}.
        \end{aligned}
    \]

    We see that
    \[
        \begin{aligned}
            \abs{\sum_{n=0}^{+\infty} \sum_{j \in \Z} (B_1 + B_4) 
            (\varphi_{j+1}^{n} - \varphi_j^n) \Delta t} & \leq  
            2 L \biggl( T \norm{\p_{xx}^2 \varphi}_{\L{\infty}(\R^+, \L{1}(\R))} 
            + \norm{\p_{tx}^2 \varphi}_{\L{1}(\R^+ \times \R)} \biggr) (\Delta x + \Delta t) \\
            & + 2 L \int_{0}^{+\infty} \int_{\R} \abs{\overline{\rho}_\Delta(t, x+\Delta x) - 
            \overline{\rho}_\Delta(t, x)} \cdot \abs{\p_x \varphi(t, x)} \d{x} \d{t}.
        \end{aligned}
    \]

    Since $(\overline{\rho}_\Delta)_\Delta$ converges a.e. on $\open{0}{+\infty} \times \R$, $(\overline{\rho}_\Delta \p_x \varphi)_\Delta$ is strongly compact in 
    $\L{1}(\open{0}{+\infty} \times \R, \R)$. As a consequence of the 
    Riesz-Fréchet-Kolmogorov compactness characterization, 
    \[
        \int_{0}^{+\infty} \int_{\R} \abs{\overline{\rho}_\Delta(t, x+\Delta x) - 
        \overline{\rho}_\Delta(t, x)} 
        \cdot \abs{\p_x \varphi(t, x)} \d{x} \d{t} \limit{\Delta}{0} 0.
    \]

    Using Lemma \ref{lmm:ConsistencyInterfaceFlux}, we can bound the contribution of $B_2$ and $B_3$ the following way:
    \[
        \begin{aligned}
            \abs{\sum_{n=0}^{+\infty} \sum_{j \in \Z} (B_2 + B_3) 
            (\varphi_{j+1}^{n} - \varphi_j^n) \Delta t} 
            & \leq 2 \mu \sum_{n=0}^{+\infty} \sum_{j \in \Z} \abs{w_{j+1}^n - w_j^n} \cdot 
            \abs{\varphi_{j+1}^{n} - \varphi_j^n} \Delta t \\
            & \leq 2 \mu \sum_{n=0}^{N} \TV(w_\Delta(t^n)) 
            \norm{\p_x \varphi}_{\L{\infty}(\R^+ \times \R)}\Delta x \Delta t 
            \limit{\Delta}{0} 0.
        \end{aligned}
    \]

    We now handle the last term in $B$. We have:
    \[
        \begin{aligned}
            & \sum_{n=0}^{+\infty} \sum_{j \in \Z} \Phi(u_j^n, w_j^n, k) 
            (\varphi_{j+1}^n - \varphi_j^n) \Delta t \\
            & = \Delta t \sum_{n=0}^{+\infty} \sum_{j \in \Z} \int_{I_j} \Phi(u_{j}^n, w_j^n, k) 
            \left( \frac{\varphi(t^n, x+ \Delta x) - \varphi(t^n, x)}{\Delta x} - 
            \p_x \varphi(t^n, x) \right) \d{x} \\
            & + \sum_{n=0}^{+\infty} \sum_{j \in \Z} \int_{t^n}^{t^{n+1}} \int_{I_j} \Phi(u_{j}^n, w_j^n, k) (\p_x \varphi(t^n, x) - \p_x \varphi(t, x)) \d{x} \d{t} \\
            & + \int_{0}^{+\infty} \int_{\R} \Phi(\overline{\rho}_\Delta, w_\Delta, k) \p_x \varphi(t, x) \d{x} \d{t} \\
            & \leq T \sup_{0 \leq p, \omega \leq 1} \abs{\Phi(p, \omega, k)} \cdot 
            \norm{\p_{tx}^2 \varphi}_{\L{\infty}(\R^+, \L{1}(\R))} \Delta x + 
            \sup_{0 \leq p, \omega \leq 1} \abs{\Phi(p, \omega, k)} \cdot 
            \norm{\p_{tx}^2 \varphi}_{\L{1}(\R^+ \times \R)} \Delta t \\
            & + \int_{0}^{+\infty} \int_{\R} \Phi(\overline{\rho}_\Delta, w_\Delta, k) \p_x \varphi(t, x) \d{x} \d{t} \\
            & \limit{\Delta}{0} \int_{0}^{+\infty} \int_{\R} \Phi(\rho, w, k) \p_x \varphi(t, x) \d{x} \d{t}.
        \end{aligned}
    \]

    Finally, recall that $(w_\Delta)_\Delta$ converges a.e. to 
    \[
        w(t, x) = \left\{ 
            \begin{array}{ccl}
            w_l & \text{if} & x < y(t) \\
            w_r & \text{if} & x > y(t).
            \end{array}
        \right.
    \]

    Since the limit function is piecewise constant, the difficulty with the traces is removed and we can pass to the limit in the last term:
    \[
        C \limit{\Delta }{0} \int_{0}^{+\infty} 
        \abs{f(k, w_l) - f(k, w_r)} \varphi(t, y(t)) \d{t}.
    \] 

    The proof is concluded.
\end{proofof}

\newpage

{\small
  \bibliography{references}
  \bibliographystyle{abbrv}
}

\end{document}